\theoremstyle{definition}
\newtheorem{thm}{Theorem}[section]
\theoremstyle{definition}
\newtheorem{cor}[thm]{Corollary}
\theoremstyle{definition}
\newtheorem{lem}[thm]{Lemma}
\theoremstyle{definition}
\newtheorem{df}[thm]{Definition}
\theoremstyle{definition}
\newtheorem{prop}[thm]{Proposition}
\theoremstyle{definition}
\newtheorem{ex}[thm]{Example}
\theoremstyle{definition}
\newtheorem{rem}[thm]{Remark}
\theoremstyle{definition}
\theoremstyle{definition}
\title{Cuntz-Krieger type uniqueness theorem for topological higher-rank graph $C^*$-algebras}
\author{Shinji Yamashita}
\address{Graduate School of Mathematics, Kyushu University, Fukuoka 819-0395, JAPAN}
\email{s-yamashita@math.kyushu-u.ac.jp}
\keywords{$C^*$-algebra; Product system; Topological higher-rank graph; Cuntz-Krieger uniqueness theorem}
\subjclass[2000]{Primary 46L05; Secondary 46L55}
\date{\today}
\begin{document}
\maketitle
%\tableofcontents
%\listoffigures
%\listoftables

\begin{abstract}
We study Sims-Yeend's product system $C^*$-algebras and topological higher-rank graph $C^*$-algebras by Yeend. 
We give a relation between Katsura's Cuntz-Pimsner covariance and Sims-Yeend's one by a direct approach and an explicit form of the core of product system $C^*$-algebras.
Finally, we prove Cuntz-Krieger type uniqueness theorem for topological higher-rank graph $C^*$-algebras under a certain aperiodic condition.
\end{abstract}

%%%%%%%%%%%%%%%%%%%%%%%%%%%%%%%%%%%%%%%%%%%%%%%%%%%%%%%%%%%%%%%
%Introduction
%%%%%%%%%%%%%%%%%%%%%%%%%%%%%%%%%%%%%%%%%%%%%%%%%%%%%%%%%%%%%%%

\section{Introduction}
\label{sec:Introduction}

In 1980, Cuntz and Krieger introduced a class of $C^*$-algebras associated with Markov shifts. These $C^*$-algebras of this class are generated by a family of non-zero partial isometries with so-called Cuntz-Krieger relations. 
They showed if a 0-1 matrix which gives a Markov shift satisfies condition (I), then a $C^*$-algebra generated by a family of any non-zero isometries with the Cuntz-Krieger relation which comes from this Markov shift is isomorphic to the universal one.
It is now called \textit{Cuntz-Krieger uniqueness theorem} and this theorem plays a key role in analyzing structure of Cuntz-Krieger algebras. 
In the proof of this theorem, we need a deep understanding of underlying dynamical systems and covariant relations. 
Our main purpose in this paper is to show a kind of this theorem for topological higher-rank graph $C^*$-algebras.

Since Cuntz and Krieger introduced $C^*$-algebras mentioned above, many authors considered various constructions motivated by Cuntz-Krieger algebras. 
In \cite{Pimsner}, Pimsner provided a class of $C^*$-algebras arising from full Hilbert $C^*$-bimodules with injective left actions, and these $C^*$-algebras are now called Cuntz-Pimsner algebras. 
After Pimsner's work, several authors have challenged to remove some technical assumptions in the construction of Cuntz-Pimsner algebras and tried to unify some constructions of $C^*$-algebras. 
Along this line, Katsura introduced the notions of topological graphs and topological graph $C^*$-algebras generalizing both graph $C^*$-algebras and homeomorphism $C^*$-algebras \cite{Katsura1}. 
In his subsequent papers, he studied relations between dynamical systems of topological graphs and properties of topological graph $C^*$-algebras. 
Furthermore he examined that a class of topological graph $C^*$-algebras includes many $C^*$-algebras which were previously given by various ways. 
In particular, in \cite{Katsura1}, Katsura showed Cuntz-Krieger type uniqueness theorem under an aperiodic condition which is called topological freeness. 
At the same time, for a general Hilbert $A$-bimodule (it is also called a $C^*$-correspondence), Katsura has proposed the appropriate analogue of the Cuntz-Pimsner algebra \cite{Katsura2}. For a full Hilbert $A$-bimodule with an injective left action, Katsura's $C^*$-algebra coincides with Pimsner's one. Katsura also  investigated fundamental properties \cite{Katsura2} and ideal structures \cite{Katsura3}. 

On the other hand, graph $C^*$-algebras were extended to higher dimension. Higher-rank graphs and higher-rank graph $C^*$-algebras were defined by Kumjian and Pask \cite{Kumjian-Pask}. Raeburn, Sims and Yeend removed some technical assumptions of the higher-rank graphs and they called finitely aligned higher-rank graphs. 
Moreover they defined finitely aligned higher-rank graph $C^*$-algebras 
and showed Cuntz-Krieger type uniqueness theorem for finitely aligned higher-rank graph $C^*$-algebras \cite{Raeburn-Sims-Yeend1}, \cite{Raeburn-Sims-Yeend2}. 
Using this theorem, Sims analyzed structure of gauge-invariant ideals of finitely aligned higher-rank graph $C^*$-algebras \cite{Sims1}. 
Subsequently, as a unification of the topological graphs and the finitely aligned higher-rank graphs, Yeend introduced the notion of topological higher-rank graphs and using the groupoid construction \cite{Renault}, he defined topological higher-rank graph $C^*$-algebras \cite{Yeend1}, \cite{Yeend2}.

As soon as Cuntz-Pimsner algebras were introduced, Fowler considered $C^*$-algebras associated with product systems of Hilbert $C^*$-bimodules over  quasi-lattice ordered groups \cite{Fowler}.
Unfortunately the Cuntz-Pimsner covariance of Fowler's $C^*$-algebras does not match with the corresponding covariances of Katsura's $C^*$-correspondences and finitely aligned higher-rank graph $C^*$-algebras. 
However, Sims-Yeend recently proposed a new Cuntz-Pimsner covariance which succeeded to overcome this problem \cite{Sims-Yeend}. 
In \cite{Carlsen-Larsen-Sims-Vittadello}, the authors showed topological higher-rank graph $C^*$-algebras are able to be defined by using the construction of the product system $C^*$-algebras or Cuntz-Nica-Pimsner algebras in the sense of Sims-Yeend and a $C^*$-algebra by this construction is isomorphic to Yeend's one.

As a remarkable result, Sims-Yeend showed the equivalence of Pimsner-Katsura's covariance and that of Sims-Yeend's one by using gauge-invariant uniqueness theorem. 
However there is a huge difference apparently between the notions of two covariances.
In this paper, we pay attention to Pimsner's work in \cite{Pimsner} and point out that there is a relation with Sims-Yeend's one.
Then we establish a connection of two covariances by an analog with Pimsner's work.
Moreover our treatment has an advantage that we can provide a model of the core of Cuntz-Pimsner algebras and Cuntz-Nica-Pimsner algebras. 
This is also applied to prove Cuntz-Krieger uniqueness theorem for topological higher-rank graph $C^*$-algebras.

 This paper is organized as follows. In Section \ref{sec:Preliminaries}, first we recall the definitions of Katsura's Cuntz-Pimsner algebras and Sims-Yeend's product system $C^*$-algebras for the lattice ordered group $\mathbb{N}^{\oplus k}$. 
We also introduce a motivated theorem of this paper in Theorem \ref{thm:Pimsneranalysiscore} due to Pimsner.
In Section \ref{sec:CPcov}, we shall show the equivalence of Katsura's Cuntz-Pimsner covariance defined in \cite[Definition 3.4]{Katsura2} and Sims-Yeend's one over $\mathbb{N}$ and an explicit form of the core of Cuntz-Pimsner algebras as a generalization of a part of Theorem \ref{thm:Pimsneranalysiscore}.
 In Section \ref{sec:core}, we describe the core of product system $C^*$-algebras over $\mathbb{N}^{\oplus k}$. This is a higher-rank version of a part of Theorem \ref{thm:Pimsneranalysiscore}.
 In Section \ref{sec:CK-Unique}, we recall the definition of topological higher rank graphs and associated $C^*$-algebras, and we give the notion of aperiodic condition in Definition \ref{df:cond(A)}. Finally we show Cuntz-Krieger type uniqueness theorem under this aperiodic condition.
%%%%%%%%%%%%%%%%%%%%%%%%%%%%%%%%%%%%%%%%%%%%%%%%%%%%%%%%%%%%%%%
%Preliminaries
%%%%%%%%%%%%%%%%%%%%%%%%%%%%%%%%%%%%%%%%%%%%%%%%%%%%%%%%%%%%%%%
\section{Notation and background}
\label{sec:Preliminaries}

In this section, we recall Katsura's Cuntz-Pimsner algebras and Sims-Yeend's product system $C^*$-algebras.

\subsection{Lattice ordered group $\mathbb{N}^{\oplus k}$} 

 We denote the set of natural numbers by $\mathbb{N}=\{ 0,1,2,\cdots \}$ and the integers by $\mathbb{Z}$. 
We denote by $\mathbb{T}$ the group consisting of complex numbers whose absolute values are 1. Given a (semi)group $P$ with identity and $1 \le k \le \infty$, we denote the direct sum of $P$ by $P^{\oplus k}=\oplus_{i=1}^k P$, the direct product of $P$ by $P^k=\Pi_{i=1}^k P$, which have natural (semi)group structure. 
We consider $\mathbb{N}^{\oplus k}$ as an additive semigroup with identity 0.
For $1 \le k \le \infty$, $e_1,\cdots ,e_k$ are standard generators of $\mathbb{N}^{\oplus k}$. 
We write the $i$-th coordinate of $m \in \mathbb{N}^{\oplus k}$ by $m_{(i)}$. 
For $m,n \in \mathbb{N}^{\oplus k}$, we say $m \le n$ if $n_{(i)}-m_{(i)}$ is a non-negative number for all $1 \le i \le k$. Otherwise we write $m \not\le n$. 
We say  $m<n$ if $m \le n$ and $m \neq n$. 
Then $\mathbb{N}^{\oplus k}$ is a lattice ordered group with a least upper bound $m \vee n$ by $(m \vee n)_{(i)}=\max \{ m_{(i)}, n_{(i)} \}$ for $1 \le i \le k$.
For $m,n \in \mathbb{N}^{\oplus k}$, define $m \wedge n \in \mathbb{N}^{\oplus k}$ by $(m \wedge n)_{(i)}=\min \{ m_{(i)}, n_{(i)} \}$. 
 
\subsection{Hilbert $C^*$-bimodules} 
\label{subsec:Hilbert}

Next we shall recall the notion of Hilbert $A$-bimodules.
Let $A$ be a $C^*$-algebra and $Y$ be a right-Hilbert $A$-module with a right $A$-action $Y \times A \ni (\xi ,a) \longmapsto \xi a \in Y$ and 
an $A$-valued inner product $\langle \cdot , \cdot \rangle_Y$. 
We denote by $\mathcal{L}(Y)$ the $C^*$-algebra of the adjointable operators on $Y$.
Given $\xi ,\eta \in Y$, the rank-one operator $\theta_{\xi ,\eta } \in \mathcal{L}(Y)$ is defined by $\theta_{\xi ,\eta }(\zeta)=\xi \langle \eta ,\zeta \rangle_Y$ for $\zeta \in Y$.
The closure of the linear span of rank-one operators is denoted by $\mathcal{K}(Y)$.
We say that $Y$ is a \textit{Hilbert $A$-bimodule} if $Y$ is a right-Hilbert $A$-module with a homomorphism $\phi : A \longrightarrow \mathcal{L}(Y)$.
Then we can define a left $A$-action on $Y$ by $a \xi=\phi (a)\xi$ for $a \in A$ and $\xi \in Y$. 
We can see $A$ itself is a Hilbert $A$-bimodule with the multiplication from both sides. 
If we emphasize this bimodule structure, we denote it by ${}_AA_A$. 
Let $L :A \ni a \longmapsto L_a \in \mathcal{K}({}_AA_A)$ be a left multiplication. 
Remark that $\mathcal{L}({}_AA_A)$ is the multiplier $C^*$-algebra of $A$ and $\mathcal{K}({}_AA_A)=A$. 
For $S \in \mathcal{K}({}_AA_A)$, there is a unique $a \in A$ so that $S=L_a$.

Let $A,B$ are $C^*$-algebras. Let $Y$ be a right-Hilbert $A$-module and $Y'$ be a right-Hilbert $B$-module. Let $\phi : A \longrightarrow \mathcal{L}(Y')$ be a left action of $Y'$. 
A balanced tensor product $Y \otimes_\phi Y'$ is a vector space spanned by $\xi \otimes \eta$ subject to $\xi a \otimes \eta = \xi \otimes \phi (a) \eta $ for $a \in A$, $\xi \in Y$, $\eta \in Y'$.
$Y \otimes_\phi Y'$ is also a right-Hilbert $B$-module: a right $B$-action $(\xi \otimes \eta )b = \xi \otimes (\eta b)$ and a $B$-valued inner product $\langle \xi_1 \otimes \eta_1 , \xi_2 \otimes \eta_2 \rangle_{Y \otimes_\phi Y'} =\langle \eta_1 , \phi (\langle \xi_1,\xi_2 \rangle_{Y'} )\eta_2 \rangle_Y$.
Similarly, if $Y$ is a Hilbert $A$-bimodule, then $Y \otimes_\phi Y'$ has a left $A$-action. 
In particular, $Y$ and $Y'$ are Hilbert $A$-bimodules, then $Y \otimes_\phi Y'$ is a Hilbert $A$-bimodule.
For $S \in \mathcal{L}(Y)$, we shall define $S \otimes_{\phi} 1_{Y'} \in \mathcal{L}(Y \otimes_\phi Y')$ so that $(S \otimes_\phi 1_{Y'})(\xi \otimes \eta )=S\xi \otimes \eta$. 
If we do not need the form of a left action $\phi$, then we write $Y \otimes_A Y'$, $S \otimes_A 1$ for $Y \otimes_\phi Y'$, $S \otimes_\phi 1$.

\subsection{Cuntz-Pimsner covariances and Cuntz-Pimsner algebras} 
\label{subsec:CP-alg}

We recall the definition of Cuntz-Pimsner algebras. We follow Katsura's definition. 
For a Hilbert $A$-bimodule $X$ and $n \in \mathbb{N}$ with a left action $\phi : A \longrightarrow \mathcal{L}(X)$, we define a right-Hilbert $A$-module $X^{\otimes n}$ by $X^{\otimes 0}=A$, $X^{\otimes 1}=X$, and $X^{\otimes (n+1)} =X \otimes_A X^{\otimes n}$ for $n \ge 1$. 
We denote the $A$-valued inner product of $X^{\otimes n}$ by $\langle \cdot , \cdot \rangle_A^n$. 
Then $X^{\otimes n}$ is a Hilbert $A$-bimodule with a left action $\phi_n : A \longrightarrow \mathcal{L}(X^{\otimes n})$ where $\phi_0(a)=L_a$, $\phi_1(a)=\phi (a)$ and $\phi_{n+1}(a)=\phi (a) \otimes 1_{X^{\otimes n}}$ for $n \ge 1$ and $a \in A$. 
Let $X$ be a Hilbert $A$-bimodule with a left action $\phi$ and $B$ be a $C^*$-algebra. A pair consisting of a *-homomorphism $\pi : A \longrightarrow B$ and a linear map $t : X \longrightarrow B$ is a \textit{representation} of $X$ to $B$ if it satisfies $t(\xi )^*t(\eta )=\pi (\langle \xi , \eta \rangle )$ for $\xi , \eta \in X$ and $\pi (a)t(\xi )=t(\phi (a)\xi )$ for $a \in A , \eta \in X$. 
We denote by $C^*(\pi ,t)$ the $C^*$-algebra generated by the image of $\pi$ and $t$ in $B$.  
For a representation $(\pi ,t)$, we set $t_0=\pi$, $t_1=t$, and for $n=2,3,\cdots $, we define a linear map $t_n : X^{\otimes n} \longrightarrow B$ by $t_n(\xi \otimes \eta )=t(\xi )t_{n-1}(\eta )$ for $\xi \in X$, $\eta \in X^{\otimes n}$. 
Then $(\pi ,t_n)$ is a representation of $X^{\otimes n}$ and 
\begin{equation*}
C^*(\pi ,t)=\overline{\mathrm{span}} \{ t_n (\xi )t_m(\eta )^* \ |\ \xi \in X^{\otimes n},\ \eta \in X^{\otimes m},\ n,m \in \mathbb{N} \} .
\end{equation*} 
For a representation $(\pi ,t)$ of a Hilbert $A$-bimodule $X$, we define a *-homomorphism $t^{(n)} : \mathcal{K}(X^{\otimes n}) \longrightarrow C^*(\pi ,t)$ by $t^{(n)} (\theta_{\xi , \eta })=t_n(\xi ) t_n(\eta )^*$.
Define an ideal $J_X$ of $A$ by $J_X = \phi^{-1}(\mathcal{K}(X)) \cap (\ker \phi )^\perp = \{ a \in A | \phi (a) \in \mathcal{K}(X) \mathrm{\ and \ }ab=0 \mathrm{\ for \ all \ } b \in \ker \phi \} $. 
A representation $(\pi ,t)$ is said to be \textit{Cuntz-Pimsner covariant} if we have $\pi (a)=t^{(1)}(\phi (a))$ for $a \in J_X$. We denote the universal Cuntz-Pimsner covariant representation by $(\pi_X ,t_X)$. We define the $C^*$-algebra $\mathcal{O}_X$ by $\mathcal{O}_X=C^*(\pi_X,t_X)$.

For a representation $(\pi ,t)$, define $C^*(\pi ,t)^{\mathrm{core}}=\overline{\mathrm{span}} \{ t_n(\xi )t_n(\eta )^*\ | \ n \in \mathbb{N},\ \xi,\eta \in X^{\otimes n} \}$. 
The $C^*$-subalgebra $C^*(\pi ,t)^{\mathrm{core}}$ of $C^*(\pi ,t)$ is called the \textit{core} of $C^*(\pi ,t)$.
For $n \in \mathbb{N}$, we define a $C^*$-subalgebra $C_n^\prime$ of $\mathcal{L}(X^{\otimes n})$ by $C_n^\prime=\mathrm{span} \{ S_p \otimes_A 1_{n-p} \ | \ S_p \in \mathcal{K}(X_p),\ 0 \le p \le n \}$ where $1_q=1_{X^{\otimes q}}$ for $q \in \mathbb{N}$. For $n \le m$, we define a map $j^\prime_{m,n} : \mathcal{K}(X^{\otimes n}) \longrightarrow \mathcal{L}(X^{\otimes m})$ by $j^\prime_{m,n}(x)=x \otimes_A 1_{m-n}$. 
If we assume $\phi$ is injective, so is $j_{m,n}^\prime$.
For each $n \in \mathbb{N}$, we define a correspondence $\kappa_n^{\prime (\pi ,t)}: C_n^\prime \longrightarrow C^*(\pi ,t)^{\mathrm{core}}$ by $\kappa _n^{\prime(\pi ,t)} (\sum_{p \le n} S_p \otimes_A 1_{n-p})=\sum_{p \le n} t^{(p)}(S_p)$. 
We do not know whether $\kappa_n^{\prime(\pi ,t)}$ is well-defined or not \textit{a priori}. 
But in \cite[Proposition 3.11]{Pimsner}, Pimsner showed that $\kappa_n^{\prime (\pi ,t)}$ is well-defined for each $n \in \mathbb{N}$ if and only if $(\pi ,t)$ is a Cuntz-Pimsner covariant representation (see also \cite[Corollary 4.8]{Fowler-Muhly-Raeburn}).

\begin{thm}[Pimsner]
\label{thm:Pimsneranalysiscore}
\normalfont\slshape
Let $X$ be a Hilbert $A$-bimodule with an injective left action $\phi$ and $(\pi ,t)$ be a representation. Then 
$(\pi ,t)$ is a Cuntz-Pimsner covariant representation if and only if for each $n \in \mathbb{N}$, $\kappa_n^{\prime (\pi ,t)}$ is a well-defined *-homomorphism. 
Moreover if $\pi$ is injective, then so is $\kappa_n^{\prime (\pi ,t)}$ and 
\begin{equation*}
\kappa^{\prime (\pi ,t)}:=\varinjlim_{n \in \mathbb{N}} \kappa_n^{\prime (\pi ,t)} : \varinjlim_{n \in \mathbb{N}} C_n^\prime \longrightarrow C^*(\pi ,t)^{\mathrm{core}}
\end{equation*}
is isometry.
\end{thm}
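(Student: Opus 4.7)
The proof decomposes naturally into four pieces: the easy direction, the inductive hard direction, the injectivity upgrade, and the inductive limit.

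For the easy direction, specialize to $n=1$. Since $\phi$ is injective, $J_X = \phi^{-1}(\mathcal{K}(X))$, and $C_1^\prime = \phi(A) + \mathcal{K}(X) \subseteq \mathcal{L}(X)$. Well-definedness of $\kappa_1^{\prime (\pi,t)}$ asserts that whenever $\phi(a) + S = 0$ with $a \in A$ and $S \in \mathcal{K}(X)$, we have $\pi(a) + t^{(1)}(S) = 0$. The hypothesis forces $a \in J_X$ with $\phi(a) = -S$, so the conclusion reads $\pi(a) = t^{(1)}(\phi(a))$ for $a \in J_X$, which is exactly the Cuntz-Pimsner covariance.

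For the hard direction, assume $(\pi,t)$ is Cuntz-Pimsner covariant and argue by induction on $n$ that $\kappa_n^{\prime (\pi,t)}$ is a well-defined $*$-homomorphism. I would use the decomposition $C_{n+1}^\prime = \mathcal{K}(X^{\otimes (n+1)}) + j^\prime_{n+1,n}(C_n^\prime)$, so that the candidate $\kappa_{n+1}^{\prime(\pi,t)}$ sends compacts via $t^{(n+1)}$ and the image $j^\prime_{n+1,n}(C_n^\prime)$ via $\kappa_n^{\prime(\pi,t)} \circ (j^\prime_{n+1,n})^{-1}$ (the inverse exists since $\phi$ injective makes $j^\prime_{n+1,n}$ injective). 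Consistency on the intersection of these two pieces reduces to a \textbf{key lemma}: if $x \in C_n^\prime$ and $x \otimes_A 1_1 \in \mathcal{K}(X^{\otimes (n+1)})$, then $\kappa_n^{\prime(\pi,t)}(x) = t^{(n+1)}(x \otimes_A 1_1)$. I would prove this lemma by approximating $x \otimes_A 1_1$ in $\mathcal{K}(X^{\otimes(n+1)})$ by sums of rank-one operators $\theta_{\zeta_i,\mu_i}$, rearranging the inner products $\langle \mu_i,\cdot \rangle_A^{n+1}$ so as to expose elements of $A$ whose images under $\phi$ lie in $\mathcal{K}(X)$, and then applying Cuntz-Pimsner covariance to replace those $\pi$-factors by $t^{(1)}$-factors on the extra tensor slot. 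Multiplicativity and $*$-preservation of $\kappa_{n+1}^{\prime (\pi,t)}$ then follow from the corresponding properties of each $t^{(p)}$ and the inductive hypothesis.

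For the injectivity upgrade under injective $\pi$: from $t(\xi)^* t(\eta) = \pi(\langle \xi,\eta \rangle_X)$, injectivity of $\pi$ forces $t$ to be isometric, and a standard argument extends this to each $t^{(n)}$ being injective on $\mathcal{K}(X^{\otimes n})$. Combined with the injectivity of $j^\prime_{m,n}$ (already guaranteed by $\phi$ injective), this yields injectivity of each $\kappa_n^{\prime (\pi,t)}$; an injective $*$-homomorphism of $C^*$-algebras is automatically isometric. By construction $\kappa_{n+1}^{\prime(\pi,t)} \circ j^\prime_{n+1,n} = \kappa_n^{\prime(\pi,t)}$, so the maps form a compatible system, and the inductive limit $\kappa^{\prime (\pi,t)}$ is therefore an isometric $*$-homomorphism into $C^*(\pi,t)^{\mathrm{core}}$; density of the image is immediate from the definition of the core.

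The main obstacle is the key lemma: Cuntz-Pimsner covariance is phrased as a level-$1$ condition, but must be deployed at arbitrary levels $n$. The technical point is that compactness of $x \otimes_A 1_1$ at level $n+1$ forces, after rewriting in rank-one form and reindexing the tensor factors, the relevant inner-product elements of $A$ to lie in $\phi^{-1}(\mathcal{K}(X)) = J_X$ (using injectivity of $\phi$), which is exactly where the covariance relation $\pi(a) = t^{(1)}(\phi(a))$ can be invoked to absorb a trailing $1_1$ into the $t^{(n+1)}$-level.
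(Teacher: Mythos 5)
Your architecture for the equivalence is sound, and it is essentially the route the paper takes for its generalization (note the paper does not reprove Theorem \ref{thm:Pimsneranalysiscore} itself, citing Pimsner and Fowler--Muhly--Raeburn; Sections 3--4 prove the $\kappa_n$ analogue). The easy direction via $n=1$ is correct, and your key lemma --- $\kappa_n^{\prime(\pi,t)}(x)=t^{(n+1)}(x\otimes_A 1_1)$ whenever $x\otimes_A 1_1$ is compact --- is exactly the right pivot; it is the telescoped form of Lemmas \ref{lem:cpt1} and \ref{lem:cpt2}. However, your proposed proof of that lemma runs in an awkward direction: expanding $x\otimes_A 1_1$ into level-$(n+1)$ rank-one operators gives no handle on the level-$n$ decomposition of $x$. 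The workable route goes downward: compressing $x\otimes_A 1_1$ by the maps $\zeta\mapsto\xi\otimes\zeta$ shows $\phi(\langle\xi,x\eta\rangle_A^n)\in\mathcal{K}(X)$, hence (by injectivity of $\phi$) $x\in\mathcal{K}(X^{\otimes n}J_X)$; one then approximates $x$ by sums of $\theta_{\xi a,\eta}$ with $a\in J_X$ and uses $\theta_{\xi a,\eta}\otimes_A 1=\sum_i\theta_{\xi\otimes\xi_i,\eta\otimes\eta_i}$ for $\phi(a)=\sum_i\theta_{\xi_i,\eta_i}$ together with $\pi(a)=t^{(1)}(\phi(a))$. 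You gesture at the $\phi^{-1}(\mathcal{K}(X))$ point, so this part is repairable, but as written the lemma is asserted rather than proved.

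The genuine gap is in the injectivity claim. Injectivity of each $t^{(p)}$ on $\mathcal{K}(X^{\otimes p})$ together with injectivity of $j^\prime_{m,n}$ does \emph{not} yield injectivity of $\kappa_n^{\prime(\pi,t)}$: the question is whether $\sum_{p\le n}t^{(p)}(S_p)=0$ forces $\sum_{p\le n}S_p\otimes_A 1_{n-p}=0$, and the images of the various $t^{(p)}$ inside $C^*(\pi,t)$ can a priori overlap in ways not governed by the overlaps inside $C_n^\prime$, so injectivity of the individual summands says nothing about the sum map. A separate argument is required; the standard one (Proposition \ref{prop:kappa3} in the paper) compresses: for $\xi_1,\xi_2\in X^{\otimes n}$ one computes
\begin{equation*}
t_n(\xi_1)^*\Bigl(\sum_{p\le n}t^{(p)}(S_p)\Bigr)t_n(\xi_2)=\pi\Bigl(\bigl\langle\xi_1,\sum_{p\le n}(S_p\otimes_A 1_{n-p})\xi_2\bigr\rangle_A^n\Bigr),
\end{equation*}
and injectivity of $\pi$ then recovers the operator $\sum_{p\le n}S_p\otimes_A 1_{n-p}$. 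Without this (or an equivalent extension/five-lemma argument as in Katsura), the ``moreover'' half of the theorem is unproved.
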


The aim of Section \ref{sec:CPcov} and Section \ref{sec:core} in this paper is to show that Sims-Yeend's definition of covariances of a product system $C^*$-algebra can be seen a variant of the well-definedness of $\kappa_n^{(\pi ,t)}$ and to establish an extension of Theorem \ref{thm:Pimsneranalysiscore}.

\subsection{Cuntz-Nica-Pimsner covariances and Cuntz-Nica-Pimsner $C^*$-algebras} 
\label{subsec:CNP-alg}

In this paper we treat product system $C^*$-algebras over $\mathbb{N}^{\oplus k}$, so we consult \cite{Sims-Yeend} for product system $C^*$-algebras over general quasi-lattice ordered groups. 
Let $A$ be a $C^*$-algebra. 
A \textit{product system} $X$ over $\mathbb{N}^{\oplus k}$ of Hilbert $A$-bimodules is a semigroup with an operation $X \times X \ni (\xi , \eta ) \longmapsto \xi \eta \in X$ fibred by $\mathbb{N}^{\oplus k}$ so that each fibre $X_n$ is a Hilbert $A$-bimodule with an $A$-valued inner product $\langle \cdot , \cdot \rangle_A^n$ and a left action $\phi_n$ such that (1) the identity fibre $X_0$ is equal to the bimodule ${}_AA_A$; (2) for each $m,n \in \mathbb{N}^{\oplus k} \setminus \{ 0 \}$, the bilinear map $M_{m,n} : X_m \otimes_A X_n \longrightarrow X_{m+n}$ so that $M_{m,n}(\xi \otimes \eta )=\xi \eta$  is isomorphic ; (3) for $a \in A$, $\xi \in X_n$, $a\xi$ and $\xi a$ coincide with the left action and  right action on $X_n$ respectively. 

 Let $B$ be a $C^*$-algebra and $X$ be a product system over $\mathbb{N}^{\oplus k}$. 
 Let $\psi : X \longrightarrow B$ is a map and $\psi_n=\psi |_{X_n}$ for $n \in \mathbb{N}^{\oplus k}$. We say $\psi$ is a \textit{representation} of $X$ to $B$ if (1) for each $n \in \mathbb{N}^{\oplus k}$, $(\psi_0, \psi_n)$ is a representation of a Hilbert $A$-bimodule $X_n$; (2) $\psi_m(\xi)\psi_n(\eta)=\psi_{m+n}(\xi \eta )$ for $m,n \in \mathbb{N}^{\oplus k}$ and $\xi \in X_m$, $\eta \in X_n$. 
 Define $\psi^{(n)}: \mathcal{K}(X_n) \longrightarrow B$ by $\psi^{(n)} (\theta_{\xi ,\eta } )=\psi_n(\xi )\psi_n(\eta )^*$.

Next, let us recall the notion of a Nica covariance. 
For $S \in \mathcal{K}(X_n)$ and $n \le m \in \mathbb{N}^{\oplus k}$, define a *-homomorphism $\iota_n^m : \mathcal{K}(X_n) \longrightarrow \mathcal{L}(X_m)$ by
\[ 
\iota_n^m(S)=
\left\{
  \begin{array}{ll}
  M_{n,m-n} \circ (S \otimes_A 1_{m-n}) \circ M_{n,m-n}^{-1} & \mbox{if $0 < n < m$} \\
  \phi_m(a) & \mbox{if $n=0$, $S=L_a \in \mathcal{K}(X_0)$ for some $a \in A$}\\
  S & \mbox{if $m=n$}
  \end{array}
\right.
\]
where $1_p:=1_{X_p}$ for $p \in \mathbb{N}^{\oplus k}$. 
 We say that $X$ is \textit{compactly aligned} if for all $m,n \in \mathbb{N}^{\oplus k}$, $S_1 \in \mathcal{K}(X_m)$ and $S_2 \in \mathcal{K}(X_n)$, we have $\iota_m^{m \vee n} (S_1) \iota_n^{m \vee n}(S_2) \in \mathcal{K}(X_{m \vee n})$.
When a product system $X$ is compactly aligned, we say that $\psi$ is a \textit{Nica covariant representation} if for all $m,n \in \mathbb{N}^{\oplus k}$, $S_1 \in \mathcal{K}(X_m)$ and $S_2 \in \mathcal{K}(X_n)$, the equation $\psi^{(m)}(S_1)\psi^{(n)}(S_2)=\psi^{(m \vee n)}(\iota_m^{m \vee n}(S_1) \iota_n^{m \vee n}(S_2))$ holds.
 For a Nica covariance $\psi :X \longrightarrow B$, define $C^*(\psi)=\overline{\mathrm{span}} \{ \psi_m(\xi )\psi_n(\eta )^* | m,n \in \mathbb{N}^{\oplus k} , \xi \in X_m ,\eta \in X_n \}$ and $C^*(\psi )^{\mathrm{core}}=\overline{\mathrm{span}} \{ \psi_n(\xi )\psi_n(\eta )^* | n \in \mathbb{N}^{\oplus k} , \xi ,\eta \in X_n \}$. We call $C^*(\psi )^\mathrm{core}$ the \textit{core} of $C^*(\psi )$. 
By \cite[Proposition 5.10]{Fowler}, if $X$ is compactly aligned, then $C^*(\psi )$ and $C^*(\psi )^{\mathrm{core}}$ are $C^*$-subalgebras of $B$. 
In \cite{Fowler}, it is shown that there exist a $C^*$-algebra $\mathcal{T}_{\mathrm{cov}}(X)$ and a Nica covariant representation $i_X$ of $X$ to $\mathcal{T}_{\mathrm{cov}}(X)$ which is the universal one in the sense that (1) $\mathcal{T}_{\mathrm{cov}}(X)$ is generated by $\{ i_X(\xi ) | \xi \in X \}$; (2) if $\psi$ is any Nica covariant representation of $X$ to a $C^*$-algebra $B$ then there is a unique homomorphism $\psi_* : \mathcal{T}_{\mathrm{cov}}(X) \longrightarrow B$ such that $\psi_* \circ i_X = \psi$.

Let us recall the notion of a Cuntz-Nica-Pimsner covariant representation. 
For a right-Hilbert $A$-module $Y$ and an ideal $I$ of $A$, set $YI=\{ \xi \in Y \ |\ \langle \xi , \xi \rangle_Y \in I \}$. 
For $n \in \mathbb{N}^{\oplus k}$, define an ideal of $A$ by
\begin{eqnarray*} 
I_n=
\left\{
  \begin{array}{ll}
  A & \mbox{if $n=0$} \\
  \bigcap_{0 < m \le n} \ker \phi_m & \mbox{if $n \neq 0$}.
  \end{array}
\right.
\end{eqnarray*} 
For $p,n \in \mathbb{N}^{\oplus k}$ with $p \le n$, define a *-homomorphism $\widetilde{\iota}_p^n : \mathcal{K}(X_p) \longrightarrow \bigoplus_{0 \le q \le n} \mathcal{L}(X_q I_{n-q} )$ by
\begin{equation*}
\widetilde{\iota}_p^n (S)=\bigoplus_{p \le q \le n} \Bigl( \iota_p^q(S)|_{X_qI_{n-q}} \Bigr) \oplus \bigoplus_{0 \le q \le n, p \not\le q} \Bigl( 0|_{X_qI_{n-q}} \Bigr) \in \bigoplus_{0 \le q \le n} \mathcal{L}(X_q I_{n-q} )
\end{equation*}
We say that a statement $P(q)$ indexed by $q \in \mathbb{N}^{\oplus k}$ is true \textit{for large $q$} if for every $n \in \mathbb{N}^{\oplus k}$, there exists $m \in \mathbb{N}^{\oplus k}$ such that $n \le m$ and $P(q)$ holds for $m \le q$. 
We say that a Nica covariant representation $\psi$ is a \textit{
Cuntz-Nica-Pimsner covariant (or CNP-covariant) representation} if 
\begin{align*}
&\sum_{p \in F}\psi^{(p)}(S_p)=0 \mbox{ whenever }F \subset \mathbb{N}^{\oplus k} \mbox{ is finite, } S_p \in \mathcal{K}(X_p) \mbox{ for each }p \in F ,\mbox{ and } \\
&\sum_{p \in F} \widetilde{\iota}_p^q (S_p)=0 \mbox{ for large }q.& 
\end{align*}
Let $i_X$ be the universal Nica covariant representation and denote by $\mathcal{I}$ an ideal of $\mathcal{T}_{\mathrm{cov}}(X)$ generated by 
\begin{align*}
\Bigl\{ \sum_{p \in F}\psi^{(p)}(S_p) \Bigl| & F \subset \mathbb{N}^{\oplus k} \mbox{ is finite, } S_p \in \mathcal{K}(X_p) \mbox{ for each }p \in F ,\mbox{ and } \\
&\sum_{p \in F} \widetilde{\iota}_p^q (S_p)=0 \mbox{ for large }q \Bigr\} .& 
\end{align*}
Define $\mathcal{NO}_X=\mathcal{T}_{\mathrm{cov}}(X) / \mathcal{I}$ and let $q_X$ be the quotient map $q_X : \mathcal{T}_{\mathrm{cov}}(X) \longrightarrow \mathcal{NO}_X$. We call $\mathcal{NO}_X$ the \textit{Cuntz-Nica-Pimsner algebra of $X$}. Let $j_X : X \longrightarrow \mathcal{NO}_X$ be the composition map $j_X=q_X \circ i_X$. Then $j_X$ is the universal CNP-representation in the sense that if $\psi :X \longrightarrow B$ is a CNP-covariant representation of $X$, then there is a unique homomorphism $\Pi \psi : \mathcal{NO}_X \longrightarrow B$ such that $\psi = \Pi \psi \circ j_X$. 
If we assume a product system $X$ is fibred by $\mathbb{N}^{\oplus k}$, then it was shown in \cite{Sims-Yeend} that $j_X$ is injective.
We shall rewrite these definitions in the style of Theorem \ref{thm:Pimsneranalysiscore}. 

For any $n \in \mathbb{N}^{\oplus k}$, set
\begin{equation*}
C_n=\mathrm{span} \{ \widetilde{\iota}_p^n (S_p) \ |\ 0 \le p \le n,\ S_p \in \mathcal{K}(X_p) \} \subset \bigoplus_{0 \le p \le n} \mathcal{L}(X_p I_{n-p} ).
\end{equation*}
Then the multiplication is closed in $C_n$ since $\widetilde{\iota}_p^n (S_p)\widetilde{\iota}_q^n (S_q)=\widetilde{\iota}_{p \vee q}^n (\iota_p^{p \vee q}(S_p)\iota_q^{p \vee q}(S_q))$. 

\begin{prop}
\normalfont\slshape
For each $n \in \mathbb{N}^{\oplus k}$, 
$C_n$ is a $C^*$-subalgebra of $\bigoplus_{0 \le p \le n} \mathcal{L}(X_p I_{n-p} )$.
\end{prop}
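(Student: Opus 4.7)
The plan is to verify $C_n$ is closed under adjoints, under multiplication, and in norm. The first two are immediate: since each $\widetilde{\iota}_p^n$ is a $*$-homomorphism we have $\widetilde{\iota}_p^n(S_p)^*=\widetilde{\iota}_p^n(S_p^*) \in C_n$, and closure under multiplication is already noted just before the statement. So the only real task is norm-closure inside $D:=\bigoplus_{0\le q\le n}\mathcal{L}(X_qI_{n-q})$, and my approach is a finite induction over the (finite) poset $[0,n]\subset \mathbb{N}^{\oplus k}$.

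For each $p\in[0,n]$ set $D_p:=\widetilde{\iota}_p^n(\mathcal{K}(X_p))$; this is a $C^*$-subalgebra of $D$ as the image of a $*$-homomorphism between $C^*$-algebras. For an upper set $U\subset [0,n]$ (meaning $p\in U$ and $q\ge p$ imply $q\in U$) set $C_n^U:=\sum_{p\in U}D_p$; the target is $U=[0,n]$. I proceed by induction on $|U|$. The base $|U|=1$ is trivial since then $C_n^U=D_p$. For the inductive step pick a minimal element $p$ of $U$ and let $U':=U\setminus\{p\}$, which remains an upper set by minimality of $p$; by hypothesis $C_n^{U'}$ is a $C^*$-subalgebra of $D$. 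The multiplicative identity
\begin{equation*}
\widetilde{\iota}_p^n(S)\,\widetilde{\iota}_q^n(T)=\widetilde{\iota}_{p\vee q}^n\bigl(\iota_p^{p\vee q}(S)\,\iota_q^{p\vee q}(T)\bigr)
\end{equation*}
combined with the observation that $p\vee q\in U'$ whenever $q\in U'$ (a point addressed below) yields $D_p\cdot C_n^{U'}\subseteq C_n^{U'}$ and $C_n^{U'}\cdot D_p\subseteq C_n^{U'}$, so by continuity of multiplication $C_n^{U'}$ is a closed two-sided ideal of $A:=\overline{C_n^U}$. Since $D_p$ is a $C^*$-subalgebra of $A$, the standard fact that $B+I$ is norm-closed whenever $B$ is a $C^*$-subalgebra and $I$ a closed ideal of a common $C^*$-algebra (identify $B+I=\pi^{-1}(\pi(B))$ for the quotient $\pi:A\to A/I$) now shows $C_n^U=D_p+C_n^{U'}$ is a $C^*$-subalgebra of $D$, completing the induction.

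The main (and only) subtle point is the claim $p\vee q\in U'$ for $q\in U'$: indeed $p\vee q\ge q$ and $U$ is upper, so $p\vee q\in U$; if it equalled $p$ then $q\le p$, which together with $q\in U$ and the minimality of $p$ forces $q=p$, contradicting $q\in U'$. Everything else in the argument is formal manipulation of $C^*$-subalgebras, so this finite-poset induction is the only genuine ingredient needed beyond the identities already recorded in the text.
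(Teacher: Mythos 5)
Your proof is correct and is essentially the same argument as the paper's: the paper also builds $C_n$ up one degree at a time (ordering the indices by the quantity $d(n,p)=\sum_l|n_{(l)}-p_{(l)}|$ and adding them in order of increasing distance from $n$, which produces exactly your chain of upper sets with a minimal element adjoined at each step), and it likewise rests on the observation that a product of generators of distinct degrees lands at $p\vee q$, which lies strictly closer to $n$, together with the fact that a $C^*$-subalgebra plus a closed ideal is closed. Your formulation via upper sets and minimal elements is a mild repackaging of the same induction, with the subalgebra-plus-ideal step spelled out more explicitly than in the paper.
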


\begin{proof}
For a finite subset $F \subset \mathbb{N}^{\oplus k}$, define $B_F= \mathrm{span}(\widetilde{\iota}_p^n(S_p)\ | p \in F,\ S_p \in \mathcal{K}(X_p) \}$. 
For $p,q \in \mathbb{N}^{\oplus k}$ with $p,q\le n$, we define $d(p,q)=\sum_{l=1}^k | p_{(l)}-q_{(l)} |$. 
Set $F_i=\{ p \in \mathbb{N}^{\oplus k}\ |\ p \le n,\ d(n,p)=i \}$ and $F_{\le i}=\{ p \in \mathbb{N}^{\oplus k}\ |\ p \le n,\ d(n,p) \le i \}$. 
Then we have $p \vee q \in F_{\le i}$ for $p \le n$ and $q \in F_{\le i}$. 
Fix a finite set $F \subset F_i$ and set $G=F \cup F_{\le i-1}$. 
First we remark that $B_{F_{\le 0} }=B_{ \{ n \} }=\widetilde{\iota}_n^n(\mathcal{K}(X_n))$ is a $C^*$-subalgebra of $\bigoplus_{0 \le p \le n} \mathcal{L}(X_pI_{n-p})$. 
Suppose $B_G$ is a $C^*$-subalgebra of $\bigoplus_{0 \le p\le n} \mathcal{L}(X_pI_{n-p})$. 
Take $p \in F_i$ with $p \notin G$. 
Then $B_{\{ p\} \cup G} = \widetilde{\iota}_p^n (\mathcal{K}(X_p)) + B_G$ is a $C^*$-subalgebra of $\bigoplus_{0 \le p\le n} \mathcal{L}(X_pI_{n-p})$ because
\begin{equation*}
\widetilde{\iota}_p^n (\mathcal{K}(X_p))B_G \subset B_{F_{\le (i-1)}} \subset B_G , \quad 
B_G \widetilde{\iota}_p^n (\mathcal{K}(X_p)) \subset B_{F_{\le (i-1)}} \subset B_G.
\end{equation*}
Repeating this, we obtain that $C_n=B_{\le n}$ is a $C^*$-subalgebra of $\bigoplus_{0 \le p\le n} \mathcal{L}(X_pI_{n-p})$.
\end{proof}

%%%%%%%%%%%%%%%%%%%%%%%%%%%%%%%%%%%%%%%%%%%%%%%%%%%%%%%%%%%%%%%
%Cuntz-Pimsner covariance
%%%%%%%%%%%%%%%%%%%%%%%%%%%%%%%%%%%%%%%%%%%%%%%%%%%%%%%%%%%%%%%

\section{Katsura's Cuntz-Pimsner covariances and CNP-covariances}
\label{sec:CPcov}

In this section, we consider product systems over $\mathbb{N}$ or Hilbert $C^*$-bimodules and associated $C^*$-algebras. 
In this case, we shall show that a representation is Cuntz-Pimsner covariant if and only if it is CNP-covariant by a direct approach.
This equivalence has been shown in \cite[Proposition 5.3 (2)]{Sims-Yeend} by using gauge-invariant uniqueness theorem. 
However we can show it without using gauge-invariant uniqueness theorem. 
Moreover we give an extension of Theorem \ref{thm:Pimsneranalysiscore}.

We prepare some notations (for detail, see \cite[Section 2]{Fowler-Muhly-Raeburn} and \cite[Section 1]{Katsura3}).
Let $I$ be an ideal of $A$ and $Y$ be a right-Hilbert $A$-module. 
We denote by $Y_I$ the quotient space $Y/YI$. 
The quotient maps $A \longrightarrow A/I$ and $Y \longrightarrow Y_I$ are denoted by the same notation $[ \cdot ]_I$. 
The space $Y_I$ has an $A/I$-valued inner product $\langle \cdot , \cdot \rangle_{Y_I}$ and a right action of $A/I$ so that $\langle [\xi ]_I, [\eta ]_I \rangle_{Y_I} =[ \langle \xi , \eta \rangle_Y ]_I$ and $[\xi ]_I [a]_I=[\xi a]_I$ for $\xi , \eta \in Y$ and $a \in A$. 
Since $S(YI) \subset YI$ for $S \in \mathcal{L}(Y)$, we can define $[\cdot ]_I : \mathcal{L}(Y) \longrightarrow \mathcal{L}(Y_I)$ so that $[S]_I [\xi ]_I = [S\xi ]_I$ for $\xi \in Y$.

\begin{lem}
\label{lem:injlem1}
\normalfont\slshape
Let $A,B$ be $C^*$-algebras. Let $X$ be a right-Hilbert $A$-module, $Y$ be a right-Hilbert $B$-module and $\phi :A \longrightarrow \mathcal{L}(Y)$ be a *-homomorphism. Then we have $\| S \otimes_\phi 1_Y \| = \| [S ]_{\ker \phi } \|$. 
\end{lem}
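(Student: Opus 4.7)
The plan is to compare the two natural $*$-homomorphisms
\[
\Phi : \mathcal{L}(X) \longrightarrow \mathcal{L}(X \otimes_\phi Y), \quad S \longmapsto S \otimes_\phi 1_Y,
\]
\[
\Psi : \mathcal{L}(X) \longrightarrow \mathcal{L}(X_{\ker \phi}), \quad S \longmapsto [S]_{\ker \phi},
\]
and show they have the same kernel. Since any $*$-homomorphism between $C^*$-algebras factors through its kernel as an isometric embedding, the equality $\ker \Phi = \ker \Psi$ yields $\|\Phi(S)\| = \|[S]_{\ker \Phi}\|_{\mathcal{L}(X)/\ker \Phi} = \|[S]_{\ker \Psi}\|_{\mathcal{L}(X)/\ker \Psi} = \|\Psi(S)\|$, which is exactly the claim.

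To see $\ker \Phi = \ker \Psi$, I would first establish the pointwise fact: for $\zeta \in X$, one has $\zeta \otimes \eta = 0$ in $X \otimes_\phi Y$ for every $\eta \in Y$ if and only if $\zeta \in X(\ker \phi)$. This follows from the formula $\|\zeta \otimes \eta\|^2 = \|\langle \eta, \phi(\langle \zeta, \zeta \rangle_X) \eta \rangle_Y\|$ inherent in the interior-tensor-product construction, together with the identity $\|T\| = \sup_{\|\eta\| \le 1} \|\langle \eta, T \eta\rangle_Y\|$ valid for any positive $T \in \mathcal{L}(Y)$: vanishing of $\zeta \otimes \eta$ for all $\eta$ forces $\phi(\langle \zeta, \zeta\rangle_X) = 0$, hence $\langle \zeta, \zeta\rangle_X \in \ker \phi$, which by the definition of $X(\ker \phi)$ adopted in the paper means $\zeta \in X(\ker \phi)$; the converse direction is immediate from the same formula.

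Applying this to the elements $S\xi$ as $\xi$ ranges over $X$ then gives the desired kernel comparison: $\Phi(S) = 0$ if and only if $S\xi \otimes \eta = 0$ for all $\xi, \eta$, if and only if $S\xi \in X(\ker \phi)$ for every $\xi \in X$, which is precisely the condition that the induced operator $[S]_{\ker \phi}$ on the quotient $X_{\ker \phi}$ vanish. Combined with the factorization remark of the first paragraph, this proves the lemma.

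The only slightly delicate point is the forward implication of the pointwise fact: one must invoke the standard formula $\|T\| = \sup_{\|\eta\|\le 1}\|\langle \eta, T\eta\rangle_Y\|$ for positive adjointable operators on a right-Hilbert module, rather than naively estimate term-by-term. Everything else is a routine unfolding of definitions, and no injectivity hypothesis on $\phi$ is used.
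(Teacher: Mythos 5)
Your argument is correct, but it takes a genuinely different route from the paper's. The paper builds an explicit inner-product-preserving isomorphism $X \otimes_\phi Y \cong X_{\ker \phi} \otimes_{\overline{\phi}} Y$, where $\overline{\phi}$ is the injective homomorphism induced on $A/\ker\phi$, checks that it intertwines $S\otimes_\phi 1_Y$ with $[S]_{\ker\phi}\otimes_{\overline{\phi}}1_Y$, and then quotes the fact that $T\mapsto T\otimes_{\overline{\phi}}1_Y$ is isometric because $\overline{\phi}$ is injective. You instead treat $S\mapsto S\otimes_\phi 1_Y$ and $S\mapsto [S]_{\ker\phi}$ as two $*$-homomorphisms out of $\mathcal{L}(X)$, identify their kernels via the pointwise computation $\|\zeta\otimes\eta\|^2=\|\langle\eta,\phi(\langle\zeta,\zeta\rangle_X)\eta\rangle_Y\|$ combined with $\|T\|=\sup_{\|\eta\|\le 1}\|\langle\eta,T\eta\rangle_Y\|$ for positive $T$, and then use that injective $*$-homomorphisms of $C^*$-algebras are isometric on the common quotient. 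The underlying analytic input is the same --- your norm formula for positive operators is precisely what proves the isometry step the paper leaves unjustified --- so neither argument is deeper, but they buy different things: the paper's version yields the isomorphism $X\otimes_\phi Y\cong X_{\ker\phi}\otimes_{\overline{\phi}}Y$ as a reusable by-product, while yours is more self-contained, pinpoints exactly where positivity is needed (one cannot estimate $\sum_i\xi_i\otimes\eta_i$ term by term), and makes transparent that no injectivity of $\phi$ is assumed. Both are valid proofs of the lemma.
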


\begin{proof}
Let $\pi : A \longrightarrow A/\ker \phi$ be the quotient map. 
Let $\overline{\phi} : A /\ker \phi \longrightarrow \mathcal{L}(Y)$ be an injective *-homomorphism such that $\phi = \overline{\phi} \circ \pi$.
We want to define $\Phi : X \otimes_\phi Y \longrightarrow X_{\ker \phi } \otimes_{\overline{\phi}} Y$ so that $\Phi \Bigl(\sum_{i=1}^n \xi_i \otimes \eta_i \Bigr) = \sum_{i=1}^n [ \xi_i ]_{\ker \phi} \otimes \eta_i$. 
We fix elements $\xi_1, \cdots , \xi_n \in X$ and $\eta_1 , \cdots ,\eta_n \in Y$. 
Then
\begin{eqnarray*}
\Bigl\langle \sum_{i=1}^n [\xi_i]_{\ker \phi} \otimes \eta_i , \sum_{j=1}^n [\xi_j]_{\ker \phi} \otimes \eta_j \Bigr\rangle_{X_I \otimes_{\overline{\phi}} Y}
&=&\sum_{i,j} \langle \eta_i , \overline{\phi} ( \langle [\xi_i ]_{\ker \phi }, [\xi_j ]_{\ker \phi } \rangle_{A / \ker \phi } ) \eta_j \rangle_Y \\
&=&\sum_{i,j} \langle \eta_i ,\phi ( \langle \xi_i , \xi_j \rangle_A ) \eta_j \rangle_Y \\
&=&\Bigl\langle \sum_{i=1}^n \xi_i \otimes \eta_i , \sum_{j=1}^n \xi_j \otimes \eta_j \Bigr\rangle_{X \otimes_\phi Y}
\end{eqnarray*}
Hence $\Phi$ can be extended on the whole $X \otimes_\phi Y$ and this induces an isomorphism $X \otimes_\phi Y \cong X_{\ker \phi} \otimes_{\overline{\phi}} Y$. Furthermore we have $\Phi \circ (S \otimes 1_Y) =  ([S]_{\ker \phi } \otimes_{\overline{\phi }} 1_Y) \circ \Phi$. Since $\overline{ \phi }$ is injective, we obtain $\| S \otimes_\phi 1_Y \| = \| [S]_{\ker \phi } \otimes_{\overline{\phi }} 1_Y \| = \| [S]_{\ker \phi } \|$.
\end{proof}

\begin{lem}
\label{lem:injlem2}
\normalfont\slshape
Let $I$ be an ideal of $A$ and $X$ be a right-Hilbert $A$-module. If $S \in \mathcal{L}(X)$ satisfies $[S]_I=0$ and $S|_{XI}=0$, then $S=0$.
\end{lem}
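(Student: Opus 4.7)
The plan is to exploit the hypothesis $[S]_I = 0$ in a module-theoretic form. Unpacking the definition, $[S]_I = 0$ in $\mathcal{L}(X_I)$ means that $[S\xi]_I = 0$ in $X_I$ for every $\xi \in X$, i.e.\ $S\xi \in XI$ for every $\xi \in X$. So the image of $S$ lies inside $XI$, while $S$ itself vanishes on $XI$ by assumption. The task is then to combine these two facts to conclude $S\xi = 0$ for every $\xi$.

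To bridge them, I would invoke an approximate unit $(e_\lambda)$ of the ideal $I$. The key observation is that $X I = \overline{\{\xi a : \xi \in X,\ a \in I\}}$, and for any $\eta \in XI$ one has $\eta e_\lambda \to \eta$; indeed $\|\eta - \eta e_\lambda\|^2 = \|\langle \eta,\eta\rangle - e_\lambda \langle \eta,\eta\rangle - \langle \eta,\eta\rangle e_\lambda + e_\lambda\langle \eta,\eta\rangle e_\lambda\| \to 0$ because $\langle \eta,\eta\rangle \in I$. Applying this to the specific element $\eta = S\xi \in XI$, one gets $(S\xi)\, e_\lambda \to S\xi$.

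Now I would use the right $A$-linearity of the adjointable operator $S$: $(S\xi) e_\lambda = S(\xi e_\lambda)$. For any $\xi \in X$ and $e_\lambda \in I$, the element $\xi e_\lambda$ lies in $XI$ (since $\langle \xi e_\lambda,\xi e_\lambda\rangle = e_\lambda \langle \xi,\xi\rangle e_\lambda \in I$), and therefore $S(\xi e_\lambda) = 0$ by the second hypothesis. Passing to the limit gives $S\xi = \lim_\lambda (S\xi)e_\lambda = \lim_\lambda S(\xi e_\lambda) = 0$, so $S = 0$.

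There is no serious obstacle here; the proof is essentially a two-line approximate-unit argument. The only thing one must be careful about is the characterisation of $XI$ and the fact that a bounded approximate unit of $I$ acts as an approximate unit for the right action on elements of $XI$, but both are standard consequences of the inner-product formula $\|\xi - \xi e_\lambda\|^2 \to 0$ for $\xi \in XI$.
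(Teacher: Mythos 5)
Your proof is correct and essentially coincides with the paper's: both arguments hinge on an approximate unit $(h_\lambda)$ of $I$, the identity $S(\eta h_\lambda)=0$ (valid because $\eta h_\lambda\in XI$), and the fact that $[S]_I=0$ forces the relevant quantity to lie in $I$ so that right multiplication by $h_\lambda$ recovers it in the limit. The only cosmetic difference is that the paper passes to the limit on the inner products $\langle\xi,S\eta\rangle\in I$, whereas you pass to the limit on the vectors $S\xi\in XI$ directly; the two readings of $[S]_I=0$ are equivalent.
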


\begin{proof}
$[S]_I=0$ implies $\langle \xi , S \eta \rangle \in I$ for any $\xi ,\eta \in X$. For an approximate unit $\{ h_\lambda \}_\lambda$ of $I$, we have 
$
\langle \xi , S\eta \rangle = \lim_\lambda \langle \xi , S \eta \rangle h_\lambda = \lim_\lambda \langle \xi , S (\eta h_\lambda ) \rangle =0
$
 since we assume $S|_{XI}=0$.
\end{proof}

\begin{prop}
\normalfont\slshape
Let $X$ be a product system over $\mathbb{N}$. 
For $n \le m \in \mathbb{N}$, define $j_{m,n} : C_n \longrightarrow C_m$ by
\begin{equation*}
j_{m,n} \Bigl( \sum_{p \le n} \widetilde{\iota}_p^n (S_p) \Bigr) = \sum_{p \le n} \widetilde{\iota}_p^m (S_p).
\end{equation*}
Then $j_{m,n}$ is a well-defined injective *-homomorphism.
\end{prop}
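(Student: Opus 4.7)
The plan is to verify in turn well-definedness, the *-homomorphism property, and injectivity of $j_{m,n}$, working coordinate-wise inside the ambient algebra $\bigoplus_{0 \le q \le \bullet} \mathcal{L}(X_q I_{\bullet - q})$. A preliminary observation that I will use repeatedly is that over $\mathbb{N}$, $\ker \phi_k$ is increasing in $k$ (a one-line induction via Lemma \ref{lem:injlem1}: $\phi_{k+1}(a)=\phi(a)\otimes_{\phi_k} 1$, so $\|\phi_{k+1}(a)\| = \|[\phi(a)]_{\ker\phi_k}\|$), hence $I_r = \ker \phi$ for every $r \ge 1$ while $I_0 = A$. The case $n = m$ reduces to the identity map, so I assume $n < m$.

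For well-definedness I unfold the definitions: the $q$-th coordinate of $\widetilde{\iota}_p^n(S_p)$ is $(S_p \otimes_A 1_{q-p})|_{X_q I_{n-q}}$ if $p \le q$ and $0$ otherwise. Given $\sum_{p \le n} \widetilde{\iota}_p^n(S_p) = 0$, I check each coordinate $q'$ of $\sum_{p \le n} \widetilde{\iota}_p^m(S_p)$: for $q' \le n$ the vanishing passes automatically because $X_{q'} I_{m-q'} \subseteq X_{q'} I_{n-q'}$, while for $n < q' \le m$ the $q = n$ coordinate of the hypothesis (where $I_0 = A$ imposes no restriction) yields $T_n := \sum_{p \le n} S_p \otimes_A 1_{n-p} = 0$ in $\mathcal{L}(X_n)$, and tensoring with $1_{q'-n}$ produces the required vanishing in $\mathcal{L}(X_{q'})$.

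Multiplicativity is immediate from the identity $\widetilde{\iota}_p^n(S_p)\widetilde{\iota}_q^n(S_q) = \widetilde{\iota}_{p \vee q}^n(\iota_p^{p \vee q}(S_p)\iota_q^{p \vee q}(S_q))$ recorded just before the proposition, applied identically on both sides of $j_{m,n}$; adjoint preservation is built into the definition of $\widetilde{\iota}_p^{\bullet}$.

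Injectivity is the main obstacle. Assuming $\sum_{p \le n} \widetilde{\iota}_p^m(S_p) = 0$, I must deduce $\sum_{p \le n} \widetilde{\iota}_p^n(S_p) = 0$. For coordinates $q < n$ the two vanishings coincide verbatim, since both $I_{n-q}$ and $I_{m-q}$ equal $\ker \phi$. The real content is coordinate $q = n$, where I need $T_n = 0$ in $\mathcal{L}(X_n)$. The hypothesis supplies two ingredients: from coordinate $q = m$, $T_n \otimes_A 1_{m-n} = 0$ in $\mathcal{L}(X_m)$; from each coordinate $n \le q < m$, the operator $T_n \otimes_A 1_{q-n}$ vanishes on $X_q \ker \phi$. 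I run descending induction on $r$ from $m-n$ down to $0$, proving $T_n \otimes_A 1_r = 0$: the inductive step invokes Lemma \ref{lem:injlem1} to deduce $[T_n \otimes_A 1_{r-1}]_{\ker \phi} = 0$ from $T_n \otimes_A 1_r = 0$, then combines this with the vanishing of $T_n \otimes_A 1_{r-1}$ on $X_{n+r-1}\ker \phi$ and Lemma \ref{lem:injlem2} to conclude $T_n \otimes_A 1_{r-1} = 0$. The endpoint $r = 0$ delivers $T_n = 0$.
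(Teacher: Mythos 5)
Your proof is correct and follows essentially the same route as the paper's: well-definedness via the inclusion $I_{m-q}\subseteq I_{n-q}$ together with tensoring the top coordinate, and injectivity by combining Lemma \ref{lem:injlem1} and Lemma \ref{lem:injlem2} to peel off one tensor factor at a time (the paper reduces to $m=n+1$ and iterates, which is your descending induction). Your explicit observation that $I_r=\ker\phi$ for all $r\ge 1$ over $\mathbb{N}$ is used only implicitly in the paper, and making it explicit tidies the treatment of the coordinates $q<n$.
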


\begin{proof}
We enough to show for $m=n+1$. For $q \le n$, 
\begin{equation*}
\sum_{p \le n} \iota_p^q(S_p)|_{X_qI_{n-q}}=0 \Longrightarrow \sum_{p \le n} \iota_p^q(S_p)\Bigl|_{X_qI_{n+1-q}} =0
\end{equation*}
since $I_{n+1-q} \subset I_{n-q}$. Moreover
\begin{equation*}
\sum_{p \le n} \iota_p^n(S_p)=0 \Longrightarrow \sum_{p \le n} \iota_p^n (S_p)\Bigl|_{X_nI_1}=0 ,\ \sum_{p \le n} \iota_p^{n+1} (S_p)=0.
\end{equation*}
This implies that $j_{n+1,n}$ is well-defined. 

Next we suppose 
\begin{equation*}
\sum_{p \le n} \widetilde{\iota}_p^{n+1}(S_p) = \sum_{p \le n} \widetilde{j}_{n+1,n} \Bigl( \sum_{p \le n} \widetilde{\iota}_p^n (S_p) \Bigr)=0.
\end{equation*} 
By using Lemma \ref{lem:injlem1}, we have
\begin{equation*}
\Bigl\| \Bigl[ \sum_{p \le n} \iota_p^n(S_p) \Bigr]_{\ker \phi } \Bigr\| = \Bigl\| \Bigl( \sum_{p \le n} \iota_p^n (S_p) \Bigr) \otimes_\phi 1_X \Bigr\| = \Bigl\| \sum_{p \le n} \iota_p^{n+1}(S_p) \Bigr\| =0 .
\end{equation*}
Since $\Bigl[ \sum_{p \le n} \iota_p^n(S_p) \Bigr]_{\ker \phi }=0$ and $\sum_{p \le n} \iota_p^n(S_p)|_{X^{\otimes n}I_1} =0$, we have $\sum_{p \le n} \iota_p^n (S_p)=0$ by using Lemma \ref{lem:injlem2}. Therefore we obtain $\sum_{p \le n} \widetilde{\iota}_p^n (S_p)=0$.
\end{proof}

\begin{rem}
For a product system $X$ over $\mathbb{N}^{\oplus k}$ with $k\ge 2$, $j_{m,n}$ is \textit{not} well-defined in general. 
This happens because, for example, 
\begin{equation*}
\iota_p^{n-e_2}(S_p)|_{X_{n-e_2}I_{e_2}}=0 \Longrightarrow \iota_p^{n-e_2+e_1}(S_p)|_{X_{n-e_2+e_1}I_{e_2}}=0
\end{equation*}
does not hold for $e_2 \le n$, $p \le n$ and $S_p \in \mathcal{K}(X_p)$.
\end{rem}

Let $\psi$ be a representation of a product system $X$ over $\mathbb{N}$. For $n \in \mathbb{N}$, we define a correspondence $\kappa_n^\psi :C_n \longrightarrow C^*(\psi )^{\mathrm{core}}$ by
\begin{equation*}
\kappa_n^\psi \Bigl( \sum_{p \le n} \widetilde{\iota}_p^n (S_p) \Bigr) = \sum_{p \le n} \psi^{(p)}(S_p).
\end{equation*}

We say a representation $(\pi ,t)$ of a Hilbert $A$-bimodule $X$ is CNP-covariant if the representation from $(\pi ,t)$ of the product system over $\mathbb{N}$ defined from $X$ is CNP-covariant.

\begin{prop}
\label{prop:CPequiv1}
\normalfont\slshape
Let $X$ be a Hilbert $A$-bimodule with a left action $\phi$. Then $(\pi ,t)$ is a CNP-covariant representation if and only if for any $n \in \mathbb{N}$, $\kappa_n^{(\pi ,t)}$ is a well-defined *-homomorphism.
\end{prop}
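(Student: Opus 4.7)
The plan is to reduce the biconditional to the previous proposition, which establishes that over $\mathbb{N}$ the connecting map $j_{m,n}: C_n \longrightarrow C_m$ is a well-defined injective $*$-homomorphism. This single fact lets me pass between the ``for large $q$'' condition in the definition of CNP-covariance and a statement living in a single $C_n$, which is the natural domain of $\kappa_n^{(\pi,t)}$.

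For the forward direction, I would assume $(\pi,t)$ is CNP-covariant and show $\kappa_n^{(\pi,t)}$ is well-defined. Suppose $\sum_{p \le n}\widetilde{\iota}_p^n(S_p)=0$. By the previous proposition, each $j_{m,n}$ is injective, hence $\sum_{p \le n}\widetilde{\iota}_p^m(S_p)=j_{m,n}\bigl(\sum_{p \le n}\widetilde{\iota}_p^n(S_p)\bigr)=0$ for every $m \ge n$. Thus the ``for large $q$'' hypothesis is satisfied, so CNP-covariance gives $\sum_{p \le n}\psi^{(p)}(S_p)=0$, which is exactly well-definedness of $\kappa_n^{(\pi,t)}$. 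The fact that $\kappa_n^{(\pi,t)}$ is then a $*$-homomorphism follows because on $C_n$ the multiplication is $\widetilde{\iota}_p^n(S_p)\widetilde{\iota}_q^n(S_q)=\widetilde{\iota}_{p\vee q}^n(\iota_p^{p\vee q}(S_p)\iota_q^{p\vee q}(S_q))$, and over $\mathbb{N}$ the analogous identity $\psi^{(p)}(S_p)\psi^{(q)}(S_q)=\psi^{(p\vee q)}(\iota_p^{p\vee q}(S_p)\iota_q^{p\vee q}(S_q))$ holds automatically for any bimodule representation (a direct computation using $\psi^{(p)}(\theta_{\xi,\eta})=\psi_p(\xi)\psi_p(\eta)^*$ and $\psi_p(\eta)^*\psi_q(\xi')=\psi_{q-p}((\iota_p^q$-type rewriting$)\cdots)$ when $p\le q$, which is the only case one needs over $\mathbb{N}$). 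Adjoints are handled similarly since $\widetilde{\iota}_p^n$ and $\psi^{(p)}$ are $*$-preserving.

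For the converse, I would assume $\kappa_n^{(\pi,t)}$ is a well-defined $*$-homomorphism for every $n \in \mathbb{N}$ and verify the CNP-covariance condition. Given a finite set $F \subset \mathbb{N}$ with $S_p \in \mathcal{K}(X_p)$ such that $\sum_{p\in F}\widetilde{\iota}_p^q(S_p)=0$ for large $q$, choose an $n \ge \max F$ with $\sum_{p\in F}\widetilde{\iota}_p^n(S_p)=0$ in $C_n$. Applying the well-defined map $\kappa_n^{(\pi,t)}$ yields $\sum_{p\in F}\psi^{(p)}(S_p)=0$, which is exactly the CNP-covariance condition.

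The substantive content sits in the forward direction, and specifically in the appeal to injectivity of $j_{m,n}$; the potential obstacle is not in the logic of the biconditional but in making sure the multiplicative and $*$-structure on $C_n$ matches that on $C^*(\pi,t)^{\mathrm{core}}$ so that well-definedness of $\kappa_n^{(\pi,t)}$ on the spanning set upgrades to a genuine $*$-homomorphism. Since $\mathbb{N}$ is totally ordered the verification of Nica-type compatibility reduces to the case $p \le q$, which is straightforward from the representation axioms, so this obstacle is mild. I would therefore present the proof in the two short directions above, invoking the compatibility identity once to justify the $*$-homomorphism upgrade.
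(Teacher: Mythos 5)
Your proof is correct and follows the paper's argument exactly: in the forward direction you push a relation in $C_n$ into every $C_m$ with $m \ge n$ via $j_{m,n}$ to produce the ``for large $q$'' hypothesis, and in the converse you locate a single $n \ge \vee F$ witnessing the vanishing and pad with $S_p=0$ for $p \notin F$ (your extra check that multiplicativity of $\kappa_n^{(\pi,t)}$ reduces to the totally ordered case $p \le q$ is a correct detail the paper leaves implicit). One small correction: the property of $j_{m,n}$ you actually use in the forward direction is its \emph{well-definedness} --- so that $\sum_{p\le n}\widetilde{\iota}_p^m(S_p)$ is the image under $j_{m,n}$ of the zero element of $C_n$ and hence zero --- not its injectivity, which is the reverse implication and plays no role in that step.
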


\begin{proof}
Let $(\pi ,t)$ be a CNP-covariant representation. Suppose $\sum_{p \le n} \widetilde{\iota}_p^n (S_p)=0$ where $S_p \in \mathcal{K}(X_p)$ for $0 \le p \le n$. Then for any $m \in \mathbb{N}^{\oplus k}$ with $n \le m$, we have
\begin{equation*}
\sum_{p \le n} \widetilde{\iota}_p^m (S_p)= j_{m,n} \bigl( \sum_{p \le n} \widetilde{\iota}_p^n (S_p) \bigr) =0.
\end{equation*}
Since $(\pi ,t)$ is CNP-covariant, we get $\sum_{p \le n} t^{(p)} (S_p)=0$. Hence $\kappa_n^{(\pi ,t)}$ is well-defined. 

Conversely, we assume that for each $n \in \mathbb{N}$, $\kappa_n^{(\pi ,t)}$ is a well-defined *-homomorphism. 
Then for any finite set $F \subset \mathbb{N}$ and we suppose $\sum_{p \in F} \widetilde{\iota}_p^q (S_p)=0$ for large $q$. Set $n_0=\vee F$. Then there exists $n \in \mathbb{N}$ such that $n_0 \le n$ and $\sum_{p \in F} \widetilde{\iota}_p^n (S_p)=0$. 
If we put $S_p=0$ for $p \notin F$ with $p \le n$, then we have $\sum_{p \le n} \widetilde{\iota}_p^n (S_p)=0$. Since we assume that $\kappa_n^{(\pi ,t)}$ is well-defined, we obtain $\sum_{p \in F} t^{(p)} (S_p)=0$.
\end{proof}

\begin{lem}
\label{lem:cpt1}
\normalfont\slshape
Let $X$ be a Hilbert $A$-bimodule with a left action $\phi$ and $(\pi ,t)$ be a Cuntz-Pimsner representation.  
For any $S \in \mathcal{K}(X^{\otimes n} J_X)$, we have $t^{(n)}(S)=t^{(n+1)}(S \otimes_A 1)$. 
\end{lem}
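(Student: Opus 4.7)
My plan is to reduce the identity to rank-one compact operators of a convenient factored form and then transport the level-one Cuntz-Pimsner covariance $\pi(b) = t^{(1)}(\phi(b))$ up through the tensor powers.

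First I would note that every element of $X^{\otimes n} J_X$ can be factored as $\xi \cdot b$ with $\xi \in X^{\otimes n}$ and $b \in J_X$ (for instance by Cohen factorization, or by a direct approximate-unit argument in $J_X$). Together with the rank-one identity $\theta_{\xi_1 b_1, \xi_2 b_2} = \theta_{\xi_1 b_1 b_2^*, \xi_2}$ and the fact that $J_X$ is an ideal, this exhibits $\mathcal{K}(X^{\otimes n} J_X)$ as the norm-closed linear span of operators of the form $\theta_{\xi b, \eta}$ with $\xi, \eta \in X^{\otimes n}$ and $b \in J_X$. Since both $t^{(n)}$ and $T \mapsto T \otimes_A 1$ are norm-continuous and linear, it suffices to establish the equality on one such rank-one operator.

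Next, using the identity $t_n(\xi b) = t_n(\xi)\pi(b)$, which one derives inductively from the representation relation $t(\zeta)^* t(\zeta') = \pi(\langle \zeta, \zeta'\rangle)$, I compute
\begin{equation*}
t^{(n)}(\theta_{\xi b, \eta}) = t_n(\xi)\, \pi(b)\, t_n(\eta)^*.
\end{equation*}
Cuntz-Pimsner covariance gives $\pi(b) = t^{(1)}(\phi(b))$, and since $\phi(b) \in \mathcal{K}(X)$ I approximate it in norm by finite sums $\sum_i \theta_{\zeta_i, \mu_i}$. Under the associativity isomorphism $X^{\otimes (n+1)} \cong X^{\otimes n} \otimes_A X$, the recursive definition of $t_{n+1}$ yields $t_n(\xi) t(\zeta) = t_{n+1}(\xi \otimes \zeta)$, so
\begin{equation*}
t^{(n)}(\theta_{\xi b, \eta}) = \lim \sum_i t_{n+1}(\xi \otimes \zeta_i)\, t_{n+1}(\eta \otimes \mu_i)^* = \lim \sum_i t^{(n+1)}\bigl(\theta_{\xi \otimes \zeta_i,\, \eta \otimes \mu_i}\bigr).
\end{equation*}

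Finally I would verify that whenever $\phi(b) = \sum_i \theta_{\zeta_i, \mu_i}$, the operator $\theta_{\xi b, \eta} \otimes_A 1$ on $X^{\otimes (n+1)}$ coincides with $\sum_i \theta_{\xi \otimes \zeta_i, \eta \otimes \mu_i}$. This is a one-line check on simple tensors $\zeta \otimes \mu$, using the balancing relation $(\xi b) \otimes \mu = \xi \otimes \phi(b)\mu$ and expanding $\phi(b)$. Taking norm limits in $\mathcal{L}(X^{\otimes (n+1)})$ and applying the norm-continuous map $t^{(n+1)}$ then gives $t^{(n+1)}(\theta_{\xi b, \eta} \otimes_A 1) = t^{(n)}(\theta_{\xi b, \eta})$, as desired.

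The main obstacle is not conceptual but notational: one must track the associativity convention carefully so that the identity $\theta_{\xi b, \eta} \otimes_A 1 = \sum_i \theta_{\xi \otimes \zeta_i, \eta \otimes \mu_i}$ is an honest equality of adjointable operators on the full $X^{\otimes (n+1)}$, not merely a formal rearrangement. Once that identification is secure the lemma follows at once by combining the steps above.
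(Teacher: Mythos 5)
Your argument is correct and follows essentially the same route as the paper's proof: reduce to rank-one operators $\theta_{\xi b,\eta}$ with $b\in J_X$, expand $\phi(b)$ as a norm-convergent sum of rank-ones to identify $\theta_{\xi b,\eta}\otimes_A 1$ with $\sum_i\theta_{\xi\otimes\zeta_i,\eta\otimes\mu_i}$, and invoke $\pi(b)=t^{(1)}(\phi(b))$. The only difference is cosmetic (you work from the $t^{(n)}$ side toward $t^{(n+1)}$, and you justify the reduction to rank-ones via Cohen factorization, which the paper leaves implicit).
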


\begin{proof}
 We enough to show for $S=\theta_{\xi a , \eta}$ where $ \xi ,\eta \in X^{\otimes n}$ and $a \in J_X$.
 First, if we represent $\phi(a)=\sum_{j=1}^\infty \theta_{\xi_i,\eta_i}$ (in the sense of norm convergence) for any $a \in \phi^{-1}(\mathcal{K}(X))$, then we can check
\begin{equation*}
\theta_{\xi a, \eta} \otimes_A 1= \sum_{i=1}^\infty \theta_{\xi \otimes \xi_i, \eta \otimes \eta_i}
\end{equation*}
for any $\xi , \eta \in X^{\otimes n}$. For $a \in J_X$,
\begin{eqnarray*}
t^{(n+1)}(\theta_{ \xi a ,\eta } \otimes_A 1)
&=&\sum_{i=1}^\infty t^{n+1}(\theta_{\xi \otimes \xi_i , \eta \otimes \eta_i})
=\sum_{i=1}^\infty t_n(\xi )t_1 (\xi_i) t_1(\eta_i)^* t_n(\eta )^*\\
&=&t_n(\xi )t^{(1)}(\phi (a)) t_n(\eta )^*
=t_n(\xi )\pi (a) t_n(\eta )^*=t_n(\xi a)t_n(\eta )^*
=t^{(n)}(\theta_{\xi a , \eta }).
\end{eqnarray*}
\end{proof}

\begin{lem}
\label{lem:cpt2}
\normalfont\slshape
Let $X$ be a Hilbert $A$-bimodule with a left action $\phi$ and $n \in \mathbb{N}$. 
Suppose $S \in \mathcal{L}(X^{\otimes n})$ satisfies $S \otimes_A 1 \in \mathcal{K}(X^{\otimes (n+1)})$ and $S|_{X^{\otimes n} \ker \phi}=0$, then $S \in \mathcal{K}(X^{\otimes n} J_X)$. 
\end{lem}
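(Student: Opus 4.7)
The plan is to first show that every inner product $\langle\zeta,S\xi\rangle$ lies in $J_X$, identify $\mathcal{K}(X^{\otimes n}J_X)$ as a two-sided ideal of $\mathcal{L}(X^{\otimes n})$, and then use an approximate unit of this ideal together with Lemma~\ref{lem:injlem1} to approximate $S$ in operator norm.

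For the inner-product claim, the hypothesis $S|_{X^{\otimes n}\ker\phi}=0$ combined with right $A$-linearity gives $(S\xi)b=S(\xi b)=0$ for $b\in\ker\phi$, hence $\langle\zeta,S\xi\rangle\in(\ker\phi)^\perp$. For the other inclusion I would introduce the slot map $\Xi_\eta:X\to X^{\otimes n}\otimes_A X$, $x\mapsto\eta\otimes x$, with adjoint $\Xi_\eta^*(\xi'\otimes x')=\phi(\langle\eta,\xi'\rangle)x'$. One checks on rank-one generators that $\Xi_\zeta^*K\Xi_\xi\in\mathcal{K}(X)$ for every $K\in\mathcal{K}(X^{\otimes(n+1)})$, and a direct calculation gives $\Xi_\zeta^*(S\otimes_A 1)\Xi_\xi=\phi(\langle\zeta,S\xi\rangle)$. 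Compactness of $S\otimes_A 1$ then forces $\langle\zeta,S\xi\rangle\in\phi^{-1}(\mathcal{K}(X))$, so altogether $\langle\zeta,S\xi\rangle\in J_X$. Taking $\zeta=S\xi$ yields $S(X^{\otimes n})\subseteq X^{\otimes n}J_X$.

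Next I would verify that $\mathcal{K}(X^{\otimes n}J_X)$ is a two-sided ideal of $\mathcal{L}(X^{\otimes n})$; on generators $\theta_{\xi c,\eta}$ with $c\in J_X$ and any $B\in\mathcal{L}(X^{\otimes n})$, the identities $B\theta_{\xi c,\eta}=\theta_{(B\xi)c,\eta}$ and $\theta_{\xi c,\eta}B=\theta_{\xi c,B^*\eta}$ keep us in the same class. Let $(k_\nu)$ be an approximate unit of the $C^*$-algebra $\mathcal{K}(X^{\otimes n}J_X)$; then $k_\nu S\in\mathcal{K}(X^{\otimes n}J_X)$, and it suffices to prove $k_\nu S\to S$ in operator norm. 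Setting $T:=k_\nu S-S$, the first step is to show $T\otimes_A 1\to 0$ in norm: since $(k_\nu)$ acts nondegenerately on $X^{\otimes n}J_X$, the net $(k_\nu\otimes_A 1)$ converges strongly to the identity on $X^{\otimes n}J_X\otimes_A X$, which contains the range of the compact operator $S\otimes_A 1$. Approximating $S\otimes_A 1$ by $(S\otimes_A 1)e_\mu$ with $(e_\mu)$ an approximate unit of $\mathcal{K}(X^{\otimes(n+1)})$ (so that the ranges of the approximants lie inside $(S\otimes_A 1)(X^{\otimes(n+1)})$) then gives $(k_\nu\otimes_A 1-1)(S\otimes_A 1)=T\otimes_A 1\to 0$ in norm.

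Finally I would transfer this convergence back to $\mathcal{L}(X^{\otimes n})$. The operator $T$ vanishes on $X^{\otimes n}\ker\phi$ (since both summands do) and has range in $X^{\otimes n}J_X\subseteq X^{\otimes n}(\ker\phi)^\perp$. Using $(\ker\phi)^\perp\cdot\ker\phi=0$ one obtains the orthogonality $X^{\otimes n}(\ker\phi)^\perp\perp X^{\otimes n}\ker\phi$, so $\|[T\xi]_{\ker\phi}\|=\|T\xi\|$ for each $\xi$, and optimizing over representatives of each coset yields $\|T\|=\|[T]_{\ker\phi}\|$. Lemma~\ref{lem:injlem1} identifies the right-hand side with $\|T\otimes_A 1\|$, so $\|k_\nu S-S\|\to 0$ and $S$ lies in the closed ideal $\mathcal{K}(X^{\otimes n}J_X)$. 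The main obstacle I anticipate is this last step: the tensor-product norm sees only the quotient by $\ker\phi$, so bridging back to $\mathcal{L}(X^{\otimes n})$ relies delicately on both hypotheses---the vanishing on the $\ker\phi$-part plus the derived range in the $(\ker\phi)^\perp$-part---without which the norm identity would fail.
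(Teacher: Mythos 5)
Your argument is correct, and its opening coincides with the paper's proof: both establish $\langle\zeta,S\xi\rangle_A^n\in J_X$ for all $\zeta,\xi\in X^{\otimes n}$ --- the $(\ker\phi)^\perp$ half from $S|_{X^{\otimes n}\ker\phi}=0$ exactly as you compute it, and the $\phi^{-1}(\mathcal{K}(X))$ half from compactness of $S\otimes_A 1$, which the paper asserts without exhibiting the slot maps $\Xi_\eta$ that you use to justify it. Where you genuinely diverge is afterwards: the paper declares at the outset that it ``is enough'' to prove the inner products lie in $J_X$ and stops there, but for an operator that is only assumed adjointable this condition by itself does not force membership in $\mathcal{K}(X^{\otimes n}J_X)$ (when $I=A$ the condition is vacuous yet $S$ need not be compact), so the compactness of $S\otimes_A 1$ must be invoked a second time. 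You supply precisely that missing step: the range of $S$ lands in $X^{\otimes n}J_X$, so $k_\nu S$ makes sense for an approximate unit $(k_\nu)$ of the ideal $\mathcal{K}(X^{\otimes n}J_X)$ of $\mathcal{L}(X^{\otimes n})$; strong convergence of $k_\nu\otimes_A 1$ on the closed submodule containing the range of the compact operator $S\otimes_A 1$ gives $\|(k_\nu S-S)\otimes_A 1\|\to 0$; and the norm identity $\|T\|=\|[T]_{\ker\phi}\|=\|T\otimes_A 1\|$ --- valid because $T:=k_\nu S-S$ kills $X^{\otimes n}\ker\phi$ and has range in $X^{\otimes n}(\ker\phi)^\perp$, together with Lemma \ref{lem:injlem1} --- transports this back to $\|k_\nu S-S\|\to 0$. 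Each of these steps checks out (the orthogonality $X^{\otimes n}(\ker\phi)^\perp\perp X^{\otimes n}\ker\phi$ and the isometry of the quotient map on $(\ker\phi)^\perp$ are both standard), so your write-up is in effect a self-contained completion of the paper's sketch: what it buys is a proof that does not lean on an unstated characterization of $\mathcal{K}(YI)$ for operators not known in advance to be compact.
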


\begin{proof}
We enough to show $\langle \xi , S \eta \rangle_A^n \in J_X$ for $\xi ,\eta \in X^{\otimes n}$. 
From the assumption $S \otimes_A 1 \in \mathcal{K}(X^{\otimes (n+1)})$, we have $\langle \xi , S \eta \rangle_A^n \in \phi^{-1} (\mathcal{K}(X))$ for $\xi ,\eta \in X^{\otimes n}$. 
For any $b \in \ker \phi$,
\begin{equation*}
\langle \xi , S\eta \rangle_A^n b=\langle \xi , S(\eta b) \rangle_A^n =0
\end{equation*}
since $S|_{X^{\otimes n} \ker \phi}=0$. This says that $\langle \xi , S \eta \rangle_A^n \in (\ker \phi )^\perp$. Hence we obtain $\langle \xi , S\eta \rangle_A^n \in J_{X}$.
\end{proof}

\begin{prop}
\label{prop:CPequiv2}
\normalfont\slshape
Let $X$ be a Hilbert $A$-bimodule with a left action $\phi$ and $(\pi ,t)$ be a representation of $X$. 
Then $(\pi ,t)$ is Cuntz-Pimsner covariant if and only if $(\pi ,t)$ is CNP-covariant.
\end{prop}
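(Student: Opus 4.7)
The plan is to prove both implications by combining the reformulation from Proposition~\ref{prop:CPequiv1} (which identifies CNP-covariance with the well-definedness of every $\kappa_n^{(\pi,t)}$) with the two preparatory Lemmas~\ref{lem:cpt1} and~\ref{lem:cpt2}. A useful preliminary remark that tames the single-index case is that $I_n = \ker \phi$ for every $n \ge 1$: the inclusion $\ker \phi \subseteq \ker \phi_m$ is immediate from $\phi_m(a) = \phi(a) \otimes_A 1$, so $I_n = \bigcap_{1 \le m \le n} \ker \phi_m = \ker \phi_1 = \ker \phi$. Under this collapse the direct-sum decomposition appearing in $\widetilde{\iota}_p^n$ simplifies considerably.

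For the direction CNP-covariance $\Rightarrow$ Cuntz-Pimsner covariance, I would fix $a \in J_X$ and set $S_0 = L_a$, $S_1 = -\phi(a)$, which is meaningful because $a \in \phi^{-1}(\mathcal{K}(X))$. A routine check shows $\widetilde{\iota}_0^q(L_a) + \widetilde{\iota}_1^q(-\phi(a)) = 0$ for every $q \ge 1$: on the $r = 0$ summand, $L_a$ restricts to zero on $X_0 I_q = \ker \phi$ since $a \in (\ker \phi)^\perp$, while on each $r \ge 1$ summand the contributions $\iota_0^r(L_a) = \phi_r(a)$ and $\iota_1^r(-\phi(a)) = -\phi(a) \otimes_A 1 = -\phi_r(a)$ cancel. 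CNP-covariance then forces $\pi(a) - t^{(1)}(\phi(a)) = 0$.

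For the converse, Proposition~\ref{prop:CPequiv1} reduces the task to showing every $\kappa_n^{(\pi,t)}$ is well-defined, so I would start from $\sum_{p \le n} \widetilde{\iota}_p^n(S_p) = 0$ and derive $\sum_{p \le n} t^{(p)}(S_p) = 0$. Introducing the partial sums $T_q = \sum_{p \le q} \iota_p^q(S_p) \in \mathcal{L}(X^{\otimes q})$, which satisfy the telescoping relation $T_q \otimes_A 1 = T_{q+1} - S_{q+1}$, the hypothesis (together with $I_{n-q} = \ker \phi$ for $q < n$) translates to $T_n = 0$ and $T_q|_{X^{\otimes q} \ker \phi} = 0$ for $0 \le q < n$. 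A backward induction on $q$ using Lemma~\ref{lem:cpt2} then gives $T_q \in \mathcal{K}(X^{\otimes q} J_X)$ for every $0 \le q \le n-1$: the base case at $q = n-1$ uses $T_{n-1} \otimes_A 1 = -S_n \in \mathcal{K}(X^{\otimes n})$, and the inductive step uses $T_{q-1} \otimes_A 1 = T_q - S_q \in \mathcal{K}(X^{\otimes q})$.

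To finish, I would invoke Lemma~\ref{lem:cpt1} (which is where Cuntz-Pimsner covariance enters, through $t^{(q)}(S) = t^{(q+1)}(S \otimes_A 1)$ for $S \in \mathcal{K}(X^{\otimes q} J_X)$) at each level to rewrite $t^{(q)}(T_q) = t^{(q+1)}(T_{q+1}) - t^{(q+1)}(S_{q+1})$. Telescoping from $q = 0$ upward and using $T_n = 0$ yields $t^{(0)}(T_0) + \sum_{r=1}^n t^{(r)}(S_r) = 0$, which is the required identity because $T_0 = S_0$. The main technical subtlety I anticipate is the careful verification that each $T_q$ really lies in $\mathcal{K}(X^{\otimes q} J_X)$, i.e., that the $(\ker \phi)^\perp$ half of the definition of $J_X$ is actually captured by $T_q|_{X^{\otimes q} \ker \phi} = 0$; once this is secured, the telescoping argument runs automatically.
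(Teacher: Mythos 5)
Your proof of the substantive direction (Cuntz--Pimsner covariant $\Rightarrow$ CNP-covariant) is essentially the paper's own argument: your partial sums $T_q$ are exactly the paper's $\widetilde{S}_m$, and the backward induction via Lemma \ref{lem:cpt2} followed by the upward telescoping via Lemma \ref{lem:cpt1} is the same, down to the observation that the $(\ker\phi)^\perp$ condition is supplied by $T_q|_{X^{\otimes q}\ker\phi}=0$. The only real difference is in the easy direction, where the paper simply cites Sims--Yeend, whereas you give a correct short direct verification by taking $F=\{0,1\}$, $S_0=L_a$, $S_1=-\phi(a)$ for $a\in J_X$ and checking $\widetilde{\iota}_0^q(L_a)+\widetilde{\iota}_1^q(-\phi(a))=0$ for all $q\ge 1$; this makes the equivalence fully self-contained.
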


\begin{proof}
In \cite{Sims-Yeend}, it has shown that if $(\pi ,t)$ is CNP-covariant, then this is Cuntz-Pimsner covariant.
Conversely, we suppose that $(\pi ,t)$ is Cuntz-Pimsner covariant. 
We have to show that $\sum_{p \le n} \widetilde{\iota}_p^n (S_p)=0$ implies $\sum_{p \le n} t^{(p)}(S_p)=0$. 
We can suppose $S_0=L_a$ for some $a \in A$.
The equation $\sum_{p \le n} \widetilde{\iota}_p^n (S_p)=0$ implies
$a \in (\ker \phi )^\perp$ and the following equations:
\begin{equation*}
\Bigl( \phi_m(a)+ \sum_{1 \le p \le m} \iota_p^m (S_p) \Bigr) \Bigl|_{X^{\otimes m} \ker \phi }=0 \ (1 \le m \le n-1) ,\quad 
\phi_n(a)+\sum_{1 \le p \le n} \iota_p^n (S_p)=0.
\end{equation*}
Since
\begin{equation*}
\Bigl( \phi_{n-1}(a) + \sum_{p=1}^{n-1} \iota_p^{n-1}(S_p) \Bigr) \otimes_A 1 =
\phi_n (a) + \sum_{p=1}^{n-1} \iota_p^n(S_p ) = -S_n \in \mathcal{K}(X^{\otimes n})
\end{equation*}
and $\Bigl( \phi_{n-1}(a)+ \sum_{p=1}^{n-1} \iota_p^{n-1}(S_p) \Bigr)\Bigl|_{X^{\otimes (n-1)} \ker \phi }=0$, we get
\begin{equation*}
\phi_{n-1}(a)+ \sum_{p=1}^{n-1} \iota_p^{n-1}(S_p) \in \mathcal{K}(X^{\otimes (n-1)} J_X)
\end{equation*}
by using Lemma \ref{lem:cpt2}. 
Repeating this inductively, for $1 \le m \le n-1$, we obtain
\begin{equation*}
\widetilde{S}_m:=\phi_m(a)+ \sum_{p=1}^m \iota_p^m(S_p) \in \mathcal{K}(X^{\otimes m} J_X)
\end{equation*}
and $a \in J_X$. 
Since we assume that $(\pi ,t)$ is Cuntz-Pimsner covariant, for $\widetilde{S}_1 \in \mathcal{K}(X J_X)$,
\begin{equation*}
\pi(a)+t^{(1)}(S_1)=t^{(1)}(\widetilde{S}_1)=t^{(2)}(\widetilde{S}_1 \otimes_A 1).
\end{equation*}
In the last part, we used Lemma \ref{lem:cpt1}. 
Since $\widetilde{S}_2=\widetilde{S}_1\otimes_A 1 + S_2 \in \mathcal{K}(X^{\otimes 2}J_X)$, we get
$
\pi (a)+t^{(1)}(S_1)+ t^{(2)}(S_2)=t^{(2)}(\widetilde{S}_2)=t^{(3)}(\widetilde{S}_2 \otimes_A 1)
$
by the similar way as above.  
Repeating this, we get 
$
\pi (a)+\sum_{1 \le p \le n-1} t^{(p)}(S_p)=t^{(n)}(\widetilde{S}_{n-1} \otimes_A 1).
$ 
Since $\widetilde{S}_{n-1} \otimes_A 1 + S_n=0$, we conclude
$
\psi_0(a)+\sum_{1 \le p \le n} t^{(p)}(S_p)=0.
$
\end{proof}

From Proposition \ref{prop:CPequiv1} and Proposition \ref{prop:CPequiv2}, we get the following statement.

\begin{thm}
\normalfont\slshape
Let $(\pi ,t)$ be a representation of a Hilbert $A$-bimodule. Then the following statement is equivalent:
\begin{enumerate}
 \item{$(\pi,t)$ is Cuntz-Pimsner covariant;}
 \item{$\kappa_n^{(\pi ,t)}$ is well-defined *-homomorphism for each $n \in \mathbb{N}$;}
 \item{$(\pi ,t)$ is CNP-representation.}
\end{enumerate}
\end{thm}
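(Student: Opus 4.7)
The plan is to observe that this theorem is essentially a packaging of the two preceding propositions. Proposition \ref{prop:CPequiv1} already established the equivalence (2)$\Leftrightarrow$(3): a representation $(\pi,t)$ is CNP-covariant precisely when each $\kappa_n^{(\pi,t)}$ is a well-defined $*$-homomorphism. Proposition \ref{prop:CPequiv2} established (1)$\Leftrightarrow$(3): Cuntz-Pimsner covariance in Katsura's sense is equivalent to CNP-covariance. So the proof I would write is just to cite these two results and chain them together, obtaining the three-way equivalence.

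More concretely, the argument I would give is roughly one line. First I would invoke Proposition \ref{prop:CPequiv2} to get (1)$\Leftrightarrow$(3); then I would invoke Proposition \ref{prop:CPequiv1} to get (2)$\Leftrightarrow$(3); combining the two yields (1)$\Leftrightarrow$(2)$\Leftrightarrow$(3). There is no genuine obstacle here because all the real content has already been carried out in the two earlier propositions: the delicate part, namely translating the CNP covariance relation (a vanishing condition about sums $\sum_p\widetilde{\iota}_p^n(S_p)$) into the Cuntz-Pimsner covariance relation $\pi(a)=t^{(1)}(\phi(a))$ on $J_X$, was handled in Proposition \ref{prop:CPequiv2} via Lemmas \ref{lem:cpt1} and \ref{lem:cpt2}.

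The formal write-up I would produce is simply:

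\begin{proof}
The equivalence (1)$\Leftrightarrow$(3) is Proposition \ref{prop:CPequiv2}, and the equivalence (2)$\Leftrightarrow$(3) is Proposition \ref{prop:CPequiv1}.
\end{proof}

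If I wanted to add value beyond this one-liner, I would point out explicitly that the theorem is the precise higher-structural analogue of Pimsner's Theorem \ref{thm:Pimsneranalysiscore}: condition (2) is the direct rank-one version of the well-definedness of $\kappa_n^{\prime(\pi,t)}$ appearing there, with the $C_n^\prime$ of Pimsner replaced by the quotient-type algebras $C_n$ built from the $\widetilde{\iota}_p^n$, and the equivalence with covariance (in Katsura's formulation (1)) is what makes this a faithful generalisation. No further calculation is needed.
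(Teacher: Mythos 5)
Your proposal matches the paper exactly: the theorem is stated there as an immediate consequence of Propositions \ref{prop:CPequiv1} and \ref{prop:CPequiv2}, with (2)$\Leftrightarrow$(3) from the former and (1)$\Leftrightarrow$(3) from the latter. No further argument is needed, and your one-line write-up is the same as the paper's.
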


\begin{thm}
\label{thm:CPanalysiscore}
\normalfont\slshape
Let $X$ be a Hilbert $A$-bimodule and $(\pi, t)$ be a Cuntz-Pimsner covariant. 
Then if $\pi$ is injective, then so is $\kappa_n^{(\pi ,t)}$ for any $n \in \mathbb{N}$. 
Moreover
\begin{equation*}
\kappa^{(\pi ,t)}:=\varinjlim_{n \in \mathbb{N}} \kappa_n^{(\pi ,t)} : \varinjlim_{n \in \mathbb{N}} C_n \longrightarrow C^*(\pi ,t)^{\mathrm{core}}
\end{equation*}
is isometry.
\end{thm}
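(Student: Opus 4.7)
By the theorem immediately preceding this one, each $\kappa_n^{(\pi,t)}$ is already known to be a well-defined $*$-homomorphism between $C^*$-algebras, so for it ``injective'' and ``isometric'' are the same thing. The direct-limit statement is then a routine consequence: once every $\kappa_n^{(\pi,t)}$ is isometric, and since the connecting maps $j_{m,n}$ were shown in an earlier proposition to be injective $*$-homomorphisms, the induced map on $\varinjlim_n C_n$ is isometric on the dense $*$-subalgebra $\bigcup_n C_n$ and hence on the completion. Thus the whole theorem reduces to proving injectivity of $\kappa_n^{(\pi,t)}$ for each fixed $n$.

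Fix $n$ and suppose $\sum_{p \le n} t^{(p)}(S_p) = 0$ with $S_p \in \mathcal{K}(X^{\otimes p})$. Writing $T_q := \sum_{p \le q} \iota_p^q(S_p) \in \mathcal{L}(X^{\otimes q})$, the target element $\sum_{p \le n}\widetilde{\iota}_p^n(S_p) \in C_n \subset \bigoplus_{0 \le q \le n}\mathcal{L}(X^{\otimes q}I_{n-q})$ has $q$-th coordinate $T_q|_{X^{\otimes q}I_{n-q}}$, and the task is to show each such restriction vanishes. My plan is to extract the $q$-th coordinate by sandwiching: for arbitrary $\eta,\xi \in X^{\otimes q}$ and $b \in I_{n-q}$, evaluate
\[ 0 = t_q(\eta)^*\Bigl(\sum_{p \le n} t^{(p)}(S_p)\Bigr) t_q(\xi)\,\pi(b) \]
and split the sum at $p=q$. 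For $p \le q$, the standard identities $t^{(p)}(S_p) t_q(\xi) = t_q(\iota_p^q(S_p)\xi)$ and $t_q(\eta)^* t_q(\zeta) = \pi(\langle \eta,\zeta\rangle_A^q)$ collapse the $p$-term to $\pi(\langle \eta,\iota_p^q(S_p)\xi\rangle_A^q\, b)$, summing to $\pi(\langle \eta, T_q\xi\rangle_A^q\, b)$. For $p > q$, writing $t^{(p)}(S_p) = t_p(\zeta_1)t_p(\zeta_2)^*$ and decomposing $\zeta_i = \zeta_i^{(1)} \otimes \zeta_i^{(2)}$ along $X^{\otimes p} = X^{\otimes q}\otimes_A X^{\otimes(p-q)}$, the same manipulations turn the $p$-term into
\[ \pi(\langle \eta,\zeta_1^{(1)}\rangle)\, t_{p-q}(\zeta_1^{(2)})\, t_{p-q}(\zeta_2^{(2)})^*\, \pi(\langle \zeta_2^{(1)},\xi\rangle b). \]

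The crucial observation, which I expect to be the main pressure point of the argument, is that $\langle \zeta_2^{(1)},\xi\rangle b$ lies in the ideal $I_{n-q}$ (since $I_{n-q}$ absorbs products with elements of $A$), and $1 \le p-q \le n-q$, so by the very definition $I_{n-q} = \bigcap_{0<m\le n-q}\ker\phi_m$ we get $\phi_{p-q}(\langle \zeta_2^{(1)},\xi\rangle b) = 0$. Taking adjoints this forces $t_{p-q}(\zeta_2^{(2)})^*\pi(\langle \zeta_2^{(1)},\xi\rangle b) = 0$, so \emph{every} $p>q$ term dies; this is exactly what the ideals $I_{n-q}$ were engineered to achieve. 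What remains is $\pi(\langle \eta, T_q\xi\rangle_A^q\, b) = 0$, and injectivity of $\pi$ together with nondegeneracy of the inner product on $X^{\otimes q}$ yields $T_q(\xi b) = 0$ for all $\xi \in X^{\otimes q}$ and $b \in I_{n-q}$, i.e.\ $T_q|_{X^{\otimes q}I_{n-q}} = 0$. This holding for every $q \le n$ gives $\sum_{p\le n}\widetilde{\iota}_p^n(S_p) = 0$, completing the injectivity step and hence the theorem. (It is worth noting that this argument exploits the one-dimensional chain structure of $\mathbb{N}$, consistent with the remark above that $j_{m,n}$ need not even be well-defined for $k \ge 2$; the higher-rank analogue is therefore treated separately in Section \ref{sec:core}.)
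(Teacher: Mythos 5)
Your argument is correct and is essentially the paper's own: the paper proves injectivity of $\kappa_n^{(\pi,t)}$ by appealing to Proposition \ref{prop:kappa3}, whose proof compresses $\sum_{p}\psi^{(p)}(S_p)$ by elements of $X_qI_{n-q}$ exactly as you do, and your direct tensor-decomposition argument killing the $p>q$ terms is precisely the $k=1$ specialization of Lemma \ref{lem:escape} (where Nica covariance is automatic). The reduction of the direct-limit isometry statement to the compatibility $\kappa_m^{(\pi,t)}\circ j_{m,n}=\kappa_n^{(\pi,t)}$ and the injectivity of the connecting maps likewise matches the paper.
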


\begin{proof}
The injectivity will be proved in Proposition \ref{prop:kappa3}. 
We can easily check that $\kappa_m^{(\pi ,t)} \circ j_{m,n}=\kappa_n^{(\pi, t)}$ for $n \le m\in \mathbb{N}$. 
Hence we have done.
\end{proof}

\begin{rem}
We shall note that our explicit form of the core will be very useful to study the properties of the core and Cuntz-Pimsner algebras, relations to dynamical system.
On the other hand, in \cite[Proposition 4.4]{Katsura1} and \cite[Proposition 6.3]{Katsura2}, Katsura showed our Corollary \ref{cor:core} for Cuntz-Pimsner algebras. 
However, Katsura did not give an explicit form of the core like Theorem \ref{thm:Pimsneranalysiscore} or Theorem \ref{thm:CPanalysiscore}. Instead, he used exact sequences of subalgebras of the core and five-lemma.
\end{rem}

%%%%%%%%%%%%%%%%%%%%%%%%%%%%%%%%%%%%%%%%%%%%%%%%%%%%%%%%%%%%%%%
%Analysis of core
%%%%%%%%%%%%%%%%%%%%%%%%%%%%%%%%%%%%%%%%%%%%%%%%%%%%%%%%%%%%%%%
\section{On the core of Cuntz-Nica-Pimsner algebras}
\label{sec:core}

In this section, we give an explicit form of the core of Cuntz-Nica-Pimsner algebras of product systems over $\mathbb{N}^{\oplus k}$. 
This is a higher-rank version of Theorem \ref{thm:Pimsneranalysiscore}.
More precisely, for an injective CNP-covariant representation $\psi$ of a product system $X$ over $\mathbb{N}^{\oplus k}$, we shall describe the core $C^*(\psi )^{\mathrm{core}}$ of $C^*(\psi )$ as an inductive limit $C^*$-algebra without using a representation $\psi$. This means that the structure of the core is independent of the choice of injective CNP-covariant representations.
We note that Lemma \ref{lem:escape} and Proposition \ref{prop:kappa3} were also proved in \cite[Section 3]{Carlsen-Larsen-Sims-Vittadello}, however for the sake of completeness, we give proofs.

\begin{df}
For $n \in \mathbb{N}^{\oplus k}$, we define a $C^*$-algebra by
\begin{equation*}
\widetilde{C}_n =\Bigl\{ \Bigl( \sum_{p \le n} \widetilde{\iota}_p^q (S_p) \Bigr)_{\{ q:n \le q \}} \in \prod_{n \le q} C_q \ \Bigl| \ p \le n,\ S_p \in \mathcal{K}(X_p) \Bigr\} .
\end{equation*}
For $n \le m$, define a *-homomorphism $\widetilde{j}_{m,n} : \widetilde{C}_n \longrightarrow \widetilde{C}_m$ by 
\begin{equation*}
\widetilde{j}_{m,n} \Bigl( \Bigl( \sum_{p \le n} \widetilde{\iota}_p^q (S_p) \Bigr)_{\{ q: n \le q\}} \Bigr)
= \Bigl( \sum_{p \le n} \widetilde{\iota}_p^q (S_p) \Bigr)_{\{ q : m \le q\}}
\end{equation*}
\end{df}
We will show that $\widetilde{j}_{m,n}$ is injective in Corollary \ref{cor:j-inj}.

\begin{df}
Define $\widetilde{\kappa}_n^\psi : \widetilde{C}_n \longrightarrow C^*(\psi )^{\mathrm{core}}$ by
\begin{equation*}
\widetilde{\kappa}_n^\psi : \widetilde{C}_n \ni \Bigl( \sum_{p \le n} \widetilde{\iota}_p^q (S_p) \Bigr)_{\{ q:n \le q \}} \longmapsto \sum_{p \le n} \psi^{(p)}(S_p) \in C^*(\psi )^{\mathrm{core}}.
\end{equation*}
\end{df}
Then $\widetilde{\kappa}_n^\psi$ is well-defined by the notion of the CNP-covariant representation. 
If $\psi$ is injective, then so is $\widetilde{\kappa}_n^\psi$. 

\begin{lem}
\label{lem:escape}
\normalfont\slshape
Let $X$ be a compactly aligned product system of Hilbert $A$-bimodules over $\mathbb{N}^{\oplus k}$, and $\psi$ be a Nica covariant representation of $X$. 
Take $n,m \in \mathbb{N}^{\oplus k}$ such that $m \not\le n$. For $\xi \in X_m$ and $\eta \in X_n I_{(m \vee n)-n}$, we have $\psi_m(\xi )^*\psi_n(\eta )=0$.
\end{lem}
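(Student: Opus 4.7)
The plan is to set $l := m \vee n$, note that $l - n > 0$ because $m \not\le n$, and reduce the claim to the single vanishing
\[
\iota_n^l(\theta_{\eta, \eta}) = 0 \quad \text{in } \mathcal{L}(X_l).
\]
Once that vanishing is in hand, Nica covariance together with the $C^*$-identity will finish things off.

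For the vanishing, I would apply Lemma \ref{lem:injlem1} with $X = X_n$, $Y = X_{l-n}$, $\phi = \phi_{l-n}$ to obtain
\[
\|\iota_n^l(\theta_{\eta, \eta})\| = \|\theta_{\eta, \eta} \otimes_{\phi_{l-n}} 1_{l-n}\| = \|[\theta_{\eta, \eta}]_{\ker \phi_{l-n}}\|.
\]
Since $l - n > 0$, the definition $I_{l-n} = \bigcap_{0 < r \le l-n} \ker \phi_r$ yields $I_{l-n} \subseteq \ker \phi_{l-n}$. Hence the hypothesis $\eta \in X_n I_{l-n}$ forces $\eta \in X_n \ker \phi_{l-n}$, so $[\eta]_{\ker \phi_{l-n}} = 0$ and therefore $[\theta_{\eta, \eta}]_{\ker \phi_{l-n}} = 0$.

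Next, I would pick a self-adjoint approximate unit $\{h_\mu\} \subset \mathcal{K}(X_m)$, so that $\psi^{(m)}(h_\mu) \psi_m(\xi) = \psi_m(h_\mu \xi) \to \psi_m(\xi)$. Nica covariance (available because $X$ is compactly aligned) then gives
\[
\psi^{(n)}(\theta_{\eta, \eta})\, \psi^{(m)}(h_\mu) = \psi^{(l)}\bigl(\iota_n^l(\theta_{\eta, \eta})\, \iota_m^l(h_\mu)\bigr) = 0
\]
for every $\mu$, and passing to the limit yields $\psi^{(n)}(\theta_{\eta, \eta}) \psi_m(\xi) = 0$. The $C^*$-identity now closes the argument:
\[
\|\psi_m(\xi)^* \psi_n(\eta)\|^2 = \|\psi_m(\xi)^* \psi^{(n)}(\theta_{\eta, \eta}) \psi_m(\xi)\| \le \|\psi_m(\xi)\| \cdot \|\psi^{(n)}(\theta_{\eta, \eta}) \psi_m(\xi)\| = 0.
\]

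The main subtlety is the norm identity furnished by Lemma \ref{lem:injlem1}: the hypothesis $\eta \in X_n I_{l-n}$ is only \emph{qualitative} information about $\eta$ modulo an ideal, whereas what is needed is the \emph{quantitative} vanishing of an operator norm on $X_l$. Lemma \ref{lem:injlem1} is precisely the bridge between these two pictures, and it is why the quotient-module machinery developed earlier in the section pays off exactly here.
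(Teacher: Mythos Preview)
Your proof is correct and shares the paper's overall strategy: establish $\iota_n^{m\vee n}(\theta_{\eta,\cdot})=0$, feed that into Nica covariance, and then pass from compact operators to a general $\xi\in X_m$. The execution differs in two places. For the vanishing, the paper simply evaluates $\iota_n^{l}(\theta_{\eta_1,\eta_2})$ on an element $\zeta_1\zeta_2\in X_nX_{l-n}$, obtaining $(\eta_1\langle\eta_2,\zeta_1\rangle_A^n)\zeta_2$; since $\eta_1\langle\eta_2,\zeta_1\rangle_A^n\in X_nI_{l-n}$ and $I_{l-n}\subset\ker\phi_{l-n}$, this product is zero---no quotient-module machinery is needed. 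Your closing claim that Lemma~\ref{lem:injlem1} is ``precisely the bridge'' that ``pays off exactly here'' therefore overstates matters: the bridge is in fact a two-line pointwise computation, and the Lemma~\ref{lem:injlem1} route, while valid, is heavier than necessary. For the passage to general $\xi$, the paper multiplies on the outside by $\psi_m(\xi_3)^*$ and $\psi_n(\eta_3)$ and invokes the density of elements of the form $\xi_1\langle\xi_2,\xi_3\rangle$ in a right Hilbert module, whereas you use an approximate unit in $\mathcal{K}(X_m)$; both devices are standard and of comparable weight.
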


\begin{proof}
For any $\xi_1, \xi_2 \in X_m$, $\eta_1 \in X_nI_{(m \vee n)-n}$, $\eta_2 \in X_n$,
\begin{equation*}
\psi_m(\xi_2)\psi_m(\xi_1)^*\psi_n(\eta_1)\psi_n(\eta_2)^*
=\psi^{(m)} (\theta_{\xi_2 ,\xi_1} ) \psi^{(n)}(\theta_{\eta_1, \eta_2})
=\psi^{(m \vee n)} ( \iota_m^{m \vee n}(\theta_{\xi_2 ,\xi_1}) \iota_n^{m \vee n}(\theta_{\eta_1 ,\eta_2}) ).
\end{equation*}
Since $\eta_1 \in X_n I_{(m \vee n)-n}$ and $(m \vee n)-n \neq 0$, 
$
\iota_n^{m \vee n}(\theta_{\eta_1, \eta_2})(\zeta_1 \zeta_2)
=(\eta_1 \langle \eta_2, \zeta_1 \rangle_A^n ) \zeta_2
$
 holds for $\zeta_1 \in X_n$, $\zeta_2 \in X_{(m \vee n)-n}$. Hence $\iota_n^{m \vee n}(\theta_{\eta_1, \eta_2})=0$. This implies $\psi_m(\xi_2)\psi_m(\xi_1)^*\psi_n(\eta_1)\psi_n(\eta_2)^*=0$. Moreover for any $\xi_3 \in X_m$ and $\eta_3 \in X_n$, 
\begin{equation*}
0=\psi_m(\xi_3)^*\psi_m(\xi_2)\psi_m(\xi_1)^*\psi_n(\eta_1)\psi_n(\eta_2)^*\psi_n(\eta_3)=\psi_m(\xi_1 \langle \xi_2, \xi_3 \rangle_A^m )^*\psi_n(\eta_1 \langle \eta_2 , \eta_3 \rangle_A^n )
\end{equation*}
holds. Since the linear span of elements in the form $\xi_1 \langle \xi_2, \xi_3 \rangle$ is dense in any right-Hilbert module, we finished. 
\end{proof}

\begin{lem}
\label{lem:kappa1}
\normalfont\slshape
Let $X$ be a compactly aligned product system of Hilbert $A$-bimodules over $\mathbb{N}^{\oplus k}$, and $\psi$ be a Nica covariant representation of $X$. 
For $0 \le m \le n,\ 0 \le p \le n$ with $p \not\le m$, $S \in \mathcal{K}(X_p)$, $\xi_1, \xi_2 \in X_mI_{n-m}$, we have
$\psi_m(\xi_1)^*\psi^{(p)}(S)\psi_m(\xi_2)=0$.
\end{lem}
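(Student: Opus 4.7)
The plan is to reduce the claim to the case where $S$ is a rank-one compact operator, and then observe that the leftmost product $\psi_m(\xi_1)^*\psi_p(\eta_1)$ already vanishes by an adjoint form of Lemma~\ref{lem:escape}.

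By linearity and norm-continuity of $\psi^{(p)}$ on $\mathcal{K}(X_p)$, it suffices to treat $S = \theta_{\eta_1,\eta_2}$ with $\eta_1,\eta_2 \in X_p$, in which case
\[
\psi_m(\xi_1)^*\psi^{(p)}(S)\psi_m(\xi_2) = \psi_m(\xi_1)^*\psi_p(\eta_1)\psi_p(\eta_2)^*\psi_m(\xi_2),
\]
so the whole expression vanishes provided $\psi_m(\xi_1)^*\psi_p(\eta_1) = 0$, equivalently (after taking adjoints) $\psi_p(\eta_1)^*\psi_m(\xi_1) = 0$.

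For the latter I would apply Lemma~\ref{lem:escape} with the lemma's indices $m$ and $n$ played by our $p$ and $m$ respectively. The hypothesis ``$m\not\le n$'' of the lemma then becomes our standing assumption $p\not\le m$, and the requirement on the non-starred factor becomes $\xi_1 \in X_m I_{(p\vee m)-m}$. The key observation is that $p,m\le n$ forces $p\vee m \le n$, hence $(p\vee m)-m \le n-m$; and since $I_r = \bigcap_{0<s\le r}\ker\phi_s$ for $r\ne 0$, the assignment $r\mapsto I_r$ is order-reversing, so $I_{n-m}\subseteq I_{(p\vee m)-m}$ and thus $X_m I_{n-m}\subseteq X_m I_{(p\vee m)-m}$. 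Invoking Lemma~\ref{lem:escape} with $\eta_1$ in the role of its starred factor and $\xi_1$ in the role of its non-starred factor then delivers $\psi_p(\eta_1)^*\psi_m(\xi_1) = 0$, which completes the argument.

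The main (and essentially only) obstacle is the indexing/bookkeeping step: correctly swapping the roles of the two indices when invoking Lemma~\ref{lem:escape}, and verifying the ideal containment $I_{n-m}\subseteq I_{(p\vee m)-m}$ coming from $p\vee m\le n$. Once these are settled, no further analytic input beyond the Nica-covariant algebra already packaged in Lemma~\ref{lem:escape} is needed.
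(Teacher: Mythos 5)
Your proof is correct and is essentially the paper's own argument: reduce by linearity and continuity to rank-one operators $\theta_{\eta_1,\eta_2}$ and kill one of the two cross terms via Lemma \ref{lem:escape}, the only bookkeeping being the containment $X_mI_{n-m}\subseteq X_mI_{(p\vee m)-m}$ coming from $p\vee m\le n$, which you verify correctly. The paper annihilates the right-hand factor $\psi_p(\eta_2)^*\psi_m(\xi_2)$ where you annihilate the adjoint twin on the left, but that is the same step.
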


\begin{proof}
This follows from $\psi_m(\xi_1 )^* \psi^{(p)}(\theta_{\eta_1, \eta_2}) \psi_m(\xi_2)=\psi_m( \xi_1)^* \psi_p(\eta_1) \psi_p(\eta_2)^* \psi_m(\xi_2)=0$ $(\eta_1,\eta_2 \in X_p)$ by Lemma \ref{lem:escape}
\end{proof}

\begin{lem}
\label{lem:kappa2}
\normalfont\slshape
Let $X$ be a product system of Hilbert $A$-bimodules over $\mathbb{N}^{\oplus k}$, and $\psi$ be a representation of $X$. 
Suppose $p \le m$ in $\mathbb{N}^{\oplus k}$. Then we have $\psi_m(\xi_1)^* \psi^{(p)}(S)\psi_m(\xi_2)=\psi_0(\langle \xi_1 , \iota_p^m(S) \xi_2 \rangle_A^m )$.
\end{lem}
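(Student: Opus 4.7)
The plan is to verify the identity on a dense subset of $X_m \times X_m$ and then extend by linearity and norm continuity. Since both sides are linear and continuous in $S \in \mathcal{K}(X_p)$, it suffices to treat $S = \theta_{\eta_1, \eta_2}$ with $\eta_1, \eta_2 \in X_p$. Moreover, since the multiplication $M_{p, m-p} : X_p \otimes_A X_{m-p} \longrightarrow X_m$ is an isomorphism of right-Hilbert $A$-modules, products of the form $\alpha \beta$ with $\alpha \in X_p$, $\beta \in X_{m-p}$ span a dense subspace of $X_m$; so by bilinear continuity in $(\xi_1, \xi_2)$ we may take $\xi_i = \alpha_i \beta_i$ for $i = 1,2$.

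With this reduction, the product system representation axiom gives $\psi_m(\alpha_i \beta_i) = \psi_p(\alpha_i) \psi_{m-p}(\beta_i)$, so the left-hand side becomes
\begin{equation*}
\psi_{m-p}(\beta_1)^* \psi_p(\alpha_1)^* \psi_p(\eta_1) \psi_p(\eta_2)^* \psi_p(\alpha_2) \psi_{m-p}(\beta_2).
\end{equation*}
Collapsing the two inner adjoint pairs via $\psi_p(\cdot)^* \psi_p(\cdot) = \psi_0(\langle \cdot, \cdot \rangle_A^p)$, merging the resulting two $\psi_0$ factors into a single $A$-element, and absorbing this into $\psi_{m-p}$ using $\psi_0(a) \psi_{m-p}(\beta) = \psi_{m-p}(\phi_{m-p}(a) \beta)$ rewrites the LHS as $\psi_0\bigl( \langle \beta_1, \phi_{m-p}(\langle \alpha_1, \eta_1 \rangle_A^p \langle \eta_2, \alpha_2 \rangle_A^p) \beta_2 \rangle_A^{m-p} \bigr)$.

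For the right-hand side, the definition $\iota_p^m(\theta_{\eta_1, \eta_2}) = M_{p, m-p} \circ (\theta_{\eta_1, \eta_2} \otimes_A 1_{m-p}) \circ M_{p, m-p}^{-1}$ immediately yields $\iota_p^m(\theta_{\eta_1, \eta_2})(\alpha_2 \beta_2) = (\eta_1 \langle \eta_2, \alpha_2 \rangle_A^p) \beta_2$; applying the inner-product formula on the balanced tensor product $X_p \otimes_A X_{m-p}$ to $\langle \alpha_1 \beta_1, (\eta_1 \langle \eta_2, \alpha_2 \rangle_A^p) \beta_2 \rangle_A^m$ reproduces the very same expression as on the left. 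The degenerate cases $p = 0$ (where $\iota_0^m(L_a) = \phi_m(a)$) and $p = m$ (where $\iota_m^m$ is the identity) bypass the factorization step and follow directly from the left-action axiom and from $\psi_m(\cdot)^* \psi_m(\cdot) = \psi_0(\langle \cdot, \cdot \rangle_A^m)$, respectively. The only point demanding real care---and the closest thing to a genuine obstacle---is matching the convention for the tensor product inner product against the left action $\phi_{m-p}$ and keeping the order of the two $A$-valued inner product factors straight; once that is aligned, the two sides agree term by term.
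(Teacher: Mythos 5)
Your proof is correct and is essentially the paper's argument written out in full: the paper's one-line proof rests on the identity $\psi^{(p)}(S)\psi_m(\xi_2)=\psi_m(\iota_p^m(S)\xi_2)$ followed by $\psi_m(\xi_1)^*\psi_m(\cdot)=\psi_0(\langle \xi_1,\cdot\rangle_A^m)$, and your rank-one reduction plus the factorization $\xi_i=\alpha_i\beta_i$ is exactly the standard verification of that first identity, merely carried all the way down to $\psi_0$ on both sides. No gaps; the degenerate cases $p=0$ and $p=m$ are handled appropriately.
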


\begin{proof}
\begin{equation*}
\psi_m(\xi_1)^* \psi^{(p)}(S)\psi_m(\xi_2)=\psi_m(\xi_1)^* \psi_m( \iota_p^m (S) \xi_2)=\psi_0 (\langle \xi_1 , \iota_p^m(S) \xi_2 \rangle_A^m ).
\end{equation*}
\end{proof}

\begin{prop}
\label{prop:kappa3}
\normalfont\slshape
Let $X$ be a compactly aligned product system of Hilbert $A$-bimodules over $\mathbb{N}^{\oplus k}$, and $\psi$ be a Nica covariant representation of $X$. 
Fix $n \in \mathbb{N}^{\oplus k}$.
For $l \le m \in \mathbb{N}^{\oplus k}$ with $n \le m$, $\xi_1, \xi_2 \in X_lI_{m-l}$, we have
\begin{equation*}
\psi_l(\xi_1)^*\Bigl( \sum_{p \le n} \psi^{(p)} (S_p) \Bigr) \psi_l(\xi_2)
= \psi_0\Bigl( \langle \xi_1, \sum_{p\le n,\ p \le l} \iota_p^l (S_p) \xi_2 \rangle_A^m \Bigr)
\end{equation*}
where $S_p \in \mathcal{K}(X_p)$ for each $0 \le p \le n$. 
In particular, if $\psi$ is injective and $\sum_{p \le n} \psi^{(p)}(S_p)=0$, then we have $\sum_{p \le n} \widetilde{\iota}_p^m (S_p)=0$ for all $n \le m$.
\end{prop}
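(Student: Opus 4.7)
The plan is to compute, for each $p \le n$, the expression $\psi_l(\xi_1)^*\psi^{(p)}(S_p)\psi_l(\xi_2)$ separately, invoking Lemma~\ref{lem:kappa1} when $p \not\le l$ and Lemma~\ref{lem:kappa2} when $p \le l$, and then sum.  First I would verify that the hypotheses line up after renaming: since $p \le n \le m$ and $l \le m$, and since $\xi_1,\xi_2 \in X_lI_{m-l}$ by assumption, Lemma~\ref{lem:kappa1} (with its ``$m$'' and ``$n$'' played by our $l$ and $m$) applies in the case $p \not\le l$ and yields $\psi_l(\xi_1)^*\psi^{(p)}(S_p)\psi_l(\xi_2)=0$, while Lemma~\ref{lem:kappa2} (with its ``$m$'' played by our $l$) applies in the case $p \le l$ and produces $\psi_0\bigl(\langle \xi_1,\iota_p^l(S_p)\xi_2\rangle_A^l\bigr)$.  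Summing over $p$ and using linearity of $\psi_0$ and of the inner product delivers the first identity.

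For the ``in particular'' statement, I would assume $\psi$ is injective (so $\psi_0$ is) and $\sum_{p \le n}\psi^{(p)}(S_p)=0$, and fix $m \ge n$.  The goal is to show that every coordinate of $\sum_{p \le n}\widetilde{\iota}_p^m(S_p) \in \bigoplus_{0 \le q \le m}\mathcal{L}(X_qI_{m-q})$ vanishes.  Setting $l=q$ in the first part, for each $q$ with $0 \le q \le m$ and every $\xi_1,\xi_2 \in X_qI_{m-q}$ we obtain
\[
\psi_0\Bigl(\Bigl\langle \xi_1,\Bigl(\sum_{p \le n,\ p \le q}\iota_p^q(S_p)\Bigr)\xi_2\Bigr\rangle_A^q\Bigr)=0.
\]
Injectivity of $\psi_0$ strips off the outer $\psi_0$, and the standard fact that an adjointable operator on a Hilbert $C^*$-module is determined by its $A$-valued inner products then forces $\sum_{p \le n,\ p \le q}\iota_p^q(S_p)$ to vanish on $X_qI_{m-q}$.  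By the very definition of $\widetilde{\iota}_p^m$, this is precisely the $q$-th coordinate of $\sum_{p \le n}\widetilde{\iota}_p^m(S_p)$, so the entire sum is zero as required.

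The main obstacle is essentially bookkeeping: tracking how the free variables of Lemmas~\ref{lem:kappa1} and~\ref{lem:kappa2} should be renamed in the present setting, and ensuring that the restriction $\iota_p^q(S_p)|_{X_qI_{m-q}}$ implicit in the definition of $\widetilde{\iota}_p^m$ is indeed available (i.e.\ that $\iota_p^q(S_p)$ preserves $X_qI_{m-q}$, which is built into the codomain $\mathcal{L}(X_qI_{m-q})$ of $\widetilde{\iota}_p^n$).  All the analytic content sits in Lemmas~\ref{lem:escape}, \ref{lem:kappa1} and \ref{lem:kappa2}; the present proposition is their assembly together with one application of injectivity of $\psi_0$.
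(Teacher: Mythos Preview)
Your proposal is correct and follows essentially the same approach as the paper: split the sum according to whether $p \le l$ or $p \not\le l$, apply Lemma~\ref{lem:kappa1} and Lemma~\ref{lem:kappa2} respectively, and for the final assertion use injectivity of $\psi_0$ to deduce that $\sum_{p\le n,\,p\le l}\iota_p^l(S_p)$ vanishes on $X_lI_{m-l}$. Your write-up is in fact more explicit about the variable renaming and the passage from vanishing inner products to vanishing of the operator than the paper's terse proof.
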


\begin{proof}
By Lemma \ref{lem:kappa1} and \ref{lem:kappa2}, we get
\begin{equation*}
\psi_l(\xi_1)^*\Bigl( \sum_{p \le n} \psi^{(p)} (S_p) \Bigr) \psi_l(\xi_2)
= \psi_l(\xi_1)^*\Bigl( \sum_{p \le m} \psi^{(p)} (S_p) \Bigr) \psi_l(\xi_2)
= \psi_0\Bigl( \langle \xi_1, \sum_{p \le n,p \le l} \iota_p^l (S_p) \xi_2 \rangle_A^l \Bigr)
\end{equation*}
for $l \in \mathbb{N}^{\oplus k}$ with $l \le m$ and $\xi_1, \xi_2 \in X_lI_{m-l}$. If we suppose that $\psi$ is injective and $\sum_{p \le n} \psi^{(p)}(S_p)=0$, then we have $\langle \xi_1, \sum_{p \le n,p\le l} \iota_p^l (S_p) \xi_2 \rangle_A^l =0$. This implies 
$
\sum_{p \le n,p \le l} \iota_p^l (S_p)|_{X_l I_{m-l}}=0.
$
\end{proof}

From the previous proposition, we obtain the following corollary.
\begin{cor}
\label{cor:kappainj}
\normalfont\slshape
Let $X$ be a compactly aligned product system of Hilbert $A$-bimodules over $\mathbb{N}^{\oplus k}$, and $\psi$ be an injective CNP-covariant representation of $X$. 
Then for each $n \in \mathbb{N}^{\oplus k}$ the *-homomorphism $\widetilde{\kappa}_n^\psi :\widetilde{C}_n \longrightarrow C^*(\psi )^{\mathrm{core}}$ is injective. 
\end{cor}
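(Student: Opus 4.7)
The plan is to deduce this corollary directly from Proposition \ref{prop:kappa3}. By definition of $\widetilde{C}_n$, any element $x \in \widetilde{C}_n$ has the form
\[
x = \Bigl( \sum_{p \le n} \widetilde{\iota}_p^q (S_p) \Bigr)_{\{q : n \le q\}}
\]
for some choice of $S_p \in \mathcal{K}(X_p)$, $p \le n$, and $x = 0$ in $\widetilde{C}_n$ means precisely that $\sum_{p \le n} \widetilde{\iota}_p^q(S_p) = 0$ for every $q \ge n$. Hence to prove injectivity of $\widetilde{\kappa}_n^\psi$ it suffices to show that whenever $\sum_{p \le n} \psi^{(p)}(S_p) = 0$ with $S_p \in \mathcal{K}(X_p)$, the collection of operators $\sum_{p \le n} \widetilde{\iota}_p^q(S_p)$ vanishes for all $q \ge n$.

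This is exactly the second assertion of Proposition \ref{prop:kappa3}: injectivity of $\psi$ (which is part of our hypothesis, since $\psi$ is an injective CNP-covariant representation) together with the relation $\sum_{p \le n} \psi^{(p)}(S_p) = 0$ forces $\sum_{p \le n} \widetilde{\iota}_p^m(S_p) = 0$ for every $m \ge n$. So the proof consists of taking $x \in \ker \widetilde{\kappa}_n^\psi$, unpacking the definition to extract the $S_p$, applying Proposition \ref{prop:kappa3}, and reading off that $x = 0$.

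There is essentially no obstacle: the only thing that one might pause to check is that $\widetilde{\kappa}_n^\psi$ is well-defined as a $*$-homomorphism, but this is already guaranteed by the CNP-covariance of $\psi$ (and is noted immediately after the definition of $\widetilde{\kappa}_n^\psi$ in the text). Thus the proof is a short direct appeal to Proposition \ref{prop:kappa3} with no further computation required.
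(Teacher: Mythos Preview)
Your proof is correct and follows exactly the same approach as the paper, which simply states that the corollary follows from Proposition~\ref{prop:kappa3}. You have merely spelled out the unpacking of the definition of $\widetilde{C}_n$ and $\widetilde{\kappa}_n^\psi$ that the paper leaves implicit.
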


\begin{prop}
\label{prop:j-inj}
\normalfont\slshape
Let $X$ be a compactly aligned product system of Hilbert $A$-bimodules over $\mathbb{N}^{\oplus k}$.
Take $S_p \in \mathcal{K}(X_p)$ for $p \le n$. Then we have 
\begin{equation*}
\sum_{p \le n} \widetilde{\iota}_p^m (S_p) =0 \Longrightarrow \sum_{p \le n} \widetilde{\iota}_p^n (S_p)=0
\end{equation*}
for $m \in \mathbb{N}^{\oplus k}$ with $n \le m$.
\end{prop}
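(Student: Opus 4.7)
The plan is to prove the proposition by induction on the coordinate-wise distance $|m-n| := \sum_i (m_{(i)} - n_{(i)})$. The base case $m = n$ is immediate, so it suffices to perform the one-step reduction: given $m_{(j)} > n_{(j)}$ for some index $j$ and setting $m' := m - e_j$, show that $\sum_{p \le n} \widetilde{\iota}_p^m(S_p) = 0$ implies $\sum_{p \le n} \widetilde{\iota}_p^{m'}(S_p) = 0$. Iterating then brings $m$ down to $n$.

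For this one-step reduction, I would analyze each component $q' \le m'$. The key observation is the support formula $I_s = \bigcap_{i \in \mathrm{supp}(s)} \ker \phi_{e_i}$ (which follows from $\ker \phi_{e_i} \subseteq \ker \phi_s$ whenever $e_i \le s$), yielding $I_{m-q'} = I_{m'-q'}$ for every $q' \le m'$ except the boundary case $q'_{(j)} = m_{(j)} - 1$, in which $I_{m-q'} = I_{m'-q'} \cap \ker \phi_{e_j}$. Outside the boundary case, the hypothesis at $q'$ transports verbatim. Inside the boundary case, I would combine the hypothesis at $q'$ with that at $q'' := q' + e_j \le m$: writing $T_{q'} := \sum_{p \le n,\, p \le q'} \iota_p^{q'}(S_p)$, a bookkeeping check together with $\iota_p^{q''} = \iota_{q'}^{q''} \circ \iota_p^{q'}$ yields $T_{q''} = T_{q'} \otimes_A 1_{X_{e_j}}$ and $I_{m-q''} = I_{m'-q'}$, so the hypothesis at $q''$ reads $(T_{q'} \otimes_A 1_{X_{e_j}})|_{X_{q''} I_{m'-q'}} = 0$. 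Applying Lemma \ref{lem:injlem1} to the restricted left action $\phi'$ of $A$ on the sub-bimodule $X_{e_j} I_{m'-q'}$, this gives $[T_{q'}]_{\ker \phi'} = 0$, while the $q'$ hypothesis gives $T_{q'}|_{X_{q'}(I_{m'-q'} \cap \ker \phi_{e_j})} = 0$. A higher-rank variant of Lemma \ref{lem:injlem2} should then combine these into the desired $T_{q'}|_{X_{q'} I_{m'-q'}} = 0$.

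The main obstacle is this final combination in the boundary case. The two pieces of information do not directly meet the hypotheses of Lemma \ref{lem:injlem2}, which would require $T_{q'}|_{X_{q'} \ker \phi'} = 0$ (a condition strictly stronger than what we have, since only the smaller ideal $I_{m'-q'} \cap \ker \phi_{e_j}$ embeds into $\ker \phi'$). A new ideal-theoretic argument is therefore required, exploiting the inclusion $I_{m'-q'} \cap \ker \phi_{e_j} \subseteq \ker \phi'$, the specific decomposition $I_{m-q'} = I_{m'-q'} \cap \ker \phi_{e_j}$, and the positivity identity $b^* d b = 0 \Rightarrow db = 0$ for $d \in A_+$. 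This sort of ideal-theoretic manipulation is invisible in the rank-one setting of Section \ref{sec:CPcov} but becomes essential for $k \ge 2$, and I expect the careful handling of these annihilator ideals to be the most technical piece of the proof.
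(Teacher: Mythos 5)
Your reduction to a single step $m \to m' := m-e_j$, the support formula $I_s=\bigcap_{i\in\mathrm{supp}(s)}\ker\phi_{e_i}$, the verbatim transfer of the non-boundary components, and the identity $T_{q''}=T_{q'}\otimes_A 1_{X_{e_j}}$ (valid because $p\le n\le m'$ forces $p_{(j)}<m_{(j)}$) are all correct, and your scheme does settle the top component $q'=m'$: there $I_{m'-q'}=A$, $\phi'=\phi_{e_j}$, and Lemmas \ref{lem:injlem1} and \ref{lem:injlem2} apply exactly as in the rank-one argument of Section \ref{sec:CPcov}. The gap is precisely where you suspect it, and it cannot be closed by ideal-theoretic manipulation of the two facts you have isolated. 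For an intermediate boundary component ($q'<m'$ with $q'_{(j)}=m'_{(j)}$) you retain only (a) $[T_{q'}]_{\ker\phi'}=0$ and (b) $T_{q'}|_{X_{q'}(I_{m'-q'}\cap\ker\phi_{e_j})}=0$, and these do not imply $T_{q'}|_{X_{q'}I_{m'-q'}}=0$. Concretely, take $A=C(\{1,2,3\})$, $I=C(\{1,2\})$, and let $Y$ play the role of $X_{e_j}$ as the bimodule of a graph with one edge $e$, $s(e)=3$, $r(e)=1$: then $YI=\{0\}$, so $\ker\phi'=A$ and (a) is vacuous, while $I\cap\ker\phi_{e_j}=C(\{2\})$, so $T=L_{\delta_1}$ on ${}_AA_A$ satisfies (b) yet $T|_{{}_AA_AI}\neq 0$. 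Hence no ``higher-rank variant of Lemma \ref{lem:injlem2}'' of the proposed shape exists; any proof must feed in more of the hypothesis than the two components $q'$ and $q'+e_j$.

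The paper's mechanism for these components is different in kind. It invokes the injectivity of $\widetilde{\iota}_0^{\,s}:A\longrightarrow\bigoplus_{0\le r\le s}\mathcal{L}(X_rI_{s-r})$ from \cite[Lemma 3.15]{Sims-Yeend} (a nontrivial input with no rank-one counterpart), tensors the restricted operator $T_{q'}|_{X_{q'}I_{m'-q'}}$ with the identity over this faithful direct sum, and checks the summands $r$ one by one: for $0<r\le m'-q'$ the balanced tensor product module is zero by definition of $I_{m'-q'}$; the summand $r=0$ is the hypothesis at $q'$ itself; and each summand with $r\not\le m'-q'$ is absorbed into the hypothesis at the component $q'+r$. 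In other words the proof uses the vanishing at \emph{every} index $q'+r$ that crosses the boundary in the $j$-th direction, not only at $q'+e_j$. That appeal to the faithfulness of $\widetilde{\iota}_0^{\,s}$ together with this multi-component bookkeeping is the missing idea in your proposal.
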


\begin{proof}
We enough to show for $m=n+e_i$. 
Suppose 
\begin{equation*}
\sum_{p \le n} \widetilde{\iota}_p^{n+e_i} (S_p) = 0.
\end{equation*}
Then we have $\sum_{p \le n} \iota_p^n(S_p)|_{X_n\ker \phi_{e_i}}=0$ and $\sum_{p \le n} \iota_p^{n+e_i}(S_p)=0$. 
The latter equation says that $\sum_{p \le n} \iota_p^n(S_p) \otimes_{\phi_{e_i}} 1_{X_{e_i}}=0$. 
By Lemma \ref{lem:injlem1}, we get $\Bigl[ \sum_{p \le n} \iota_p^n (S_p) \Bigr]_{\ker \phi_{e_i}} =0$. By Lemma \ref{lem:injlem2}, we obtain $\sum_{p \le n} \iota_p^n (S_p)=0$. 
Next, for $m < n$, we shall show
\begin{equation*}
\sum_{p \le m} \iota_p^m (S_p) \Bigl|_{X_m I_{n-m}} =0.
\end{equation*}
Thanks to \cite[Lemma 3.15]{Sims-Yeend}, $\widetilde{\phi}_{n-m+e_i}:=\widetilde{\iota}_0^{n-m+e_i} : A \longrightarrow \bigoplus_{0 \le q \le n+m-e_i} \mathcal{L}(X_qI_{n-m+e_i-q})$ is injective. So we shall prove
\begin{equation*}
\Bigl( \sum_{p \le m} \iota_p^m (S_p) \Bigl|_{X_m I_{n-m}} \Bigr) \otimes_{\widetilde{\phi}_{n-m+e_i}} 1_{\bigoplus_{q \le n+m-e_i} X_qI_{n-m+e_i-q}}=0.
\end{equation*}
To prove this, we shall show
\begin{equation*}
\Bigl( \sum_{p \le m} \iota_p^m (S_p) \Bigl|_{X_m I_{n-m}} \Bigr) \otimes_{\phi_q} 1_{X_qI_{(n-m+e_i)-q}}=0.
\end{equation*}
for $q \le n-m+e_i$. 
We shall consider the following three cases for $q$.\\
\textbf{Case.1 $q \neq 0$ and $q \le n-m$: }
Since $X_m I_{n-m} \otimes_{\phi_q} X_qI_{(n-m+e_i)-q}=\{ 0\}$ by the definition of $I_{n-m}$, we have
\begin{equation*}
\Bigl( \sum_{p \le m} \iota_p^m (S_p) \Bigl|_{X_m I_{n-m}} \Bigr) \otimes_{\phi_q} 1_{X_qI_{(n-m+e_i)-q}}=0.
\end{equation*}
\textbf{Case.2 $q=0$: }
Remark that $X_mI_{n-m} \otimes_{\phi_0} I_{n-m+e_i}=X_mI_{n-m+e_i}$ and we now assume 
\begin{equation*}
\sum_{p \le m} \iota_p^m(S_p)\Bigl|_{X_mI_{n-m+e_i}}=0.
\end{equation*}
From these, we obtain
\begin{equation*}
\Bigl( \sum_{p \le m} \iota_p^m (S_p) \Bigl|_{X_m I_{n-m}} \Bigr) \otimes_{\phi_0} 1_{I_{n-m+e_i}}=0.
\end{equation*}
\textbf{Case.3 $q \neq 0$ and $q \not\le n-m$: }
Fix $\xi \in X_mI_{n-m}$ and $\eta \in X_qI_{n-m+e_i}$. Then we have $\xi\eta \in X_{m+q}I_{n-m+e_i}$ and 
\begin{equation*}
\Bigl( \sum_{p \le m} \iota_p^m(S_p)\xi \Bigr) \eta = \sum_{p \le m} \iota_p^{m+q}(S_p)(\xi\eta)=0 
\end{equation*}
by the assumption. Hence we have $\sum_{p \le m} \iota_p^m(S_p)\xi \otimes \eta =0$. Hence we have done for this case.
 
Combining the all cases, we get $\sum_{p \le n} \widetilde{\iota}_p^n(S_p)=0$.
\end{proof}

\begin{cor}
\label{cor:j-inj}
\normalfont\slshape
Let $X$ be a compactly aligned product system of Hilbert $A$-bimodules over $\mathbb{N}^{\oplus k}$.
For $n \le m \in \mathbb{N}^{\oplus k}$, $\widetilde{j}_{m,n}$ is injective. 
\end{cor}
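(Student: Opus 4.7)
The plan is to deduce Corollary \ref{cor:j-inj} immediately from Proposition \ref{prop:j-inj}, by unpacking what injectivity of $\widetilde{j}_{m,n}$ actually asks for. A typical element of $\widetilde{C}_n$ has the form $x=\bigl(\sum_{p\le n}\widetilde{\iota}_p^q(S_p)\bigr)_{q\ge n}$ for some $S_p\in\mathcal{K}(X_p)$, and $\widetilde{j}_{m,n}(x)$ is the restriction of this family to the indices $q\ge m$. Hence $\widetilde{j}_{m,n}(x)=0$ exactly means
\begin{equation*}
\sum_{p\le n}\widetilde{\iota}_p^q(S_p)=0\quad\text{for every }q\ge m,
\end{equation*}
and we must promote this to the same vanishing for every $q\ge n$.

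Fix such a $q$ with $n\le q$, and let $q':=q\vee m$, which satisfies $q'\ge m$ and $q'\ge q$. By hypothesis, $\sum_{p\le n}\widetilde{\iota}_p^{q'}(S_p)=0$. Extend the given family by setting $S_p':=S_p$ if $p\le n$ and $S_p':=0$ if $p\le q$ but $p\not\le n$; by linearity of each $\widetilde{\iota}_p^{q'}$ we still have $\sum_{p\le q}\widetilde{\iota}_p^{q'}(S_p')=0$. Now apply Proposition \ref{prop:j-inj} with $n$ replaced by $q$ and $m$ replaced by $q'$ to conclude $\sum_{p\le q}\widetilde{\iota}_p^q(S_p')=0$, which is precisely $\sum_{p\le n}\widetilde{\iota}_p^q(S_p)=0$. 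Since $q\ge n$ was arbitrary, the whole tuple $x$ vanishes, giving injectivity of $\widetilde{j}_{m,n}$.

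There is no real obstacle left: the technical content — passing a vanishing relation at a high index down to a lower one in the presence of the ideals $I_{n-q}$ — was handled in Proposition \ref{prop:j-inj} by a case analysis based on the relative position of the coordinate $q$ to $n-m$. The only point to be careful about in the present argument is the quantifier order: one cannot test the vanishing directly at the prescribed $q$, and the device of replacing $q$ with $q\vee m$ (together with zero-padding the $S_p$) is what lets us invoke the previous proposition.
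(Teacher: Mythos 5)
Your proof is correct and is essentially the argument the paper leaves implicit: injectivity of $\widetilde{j}_{m,n}$ is checked componentwise by descending from $q\vee m$ (where the hypothesis applies) to an arbitrary $q\ge n$ via Proposition \ref{prop:j-inj}, after zero-padding the $S_p$. Your attention to the quantifier issue — that one cannot test the vanishing directly at $q$ but must route through $q\vee m$ — is exactly the point that makes the corollary a genuine (if short) consequence of the proposition rather than a restatement of it.
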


Hence we can consider an inductive limit $C^*$-algebra
\begin{equation*}
\varinjlim_{n \in \mathbb{N}^{\oplus k}} \widetilde{C}_n =\varinjlim_{n \in \mathbb{N}^{\oplus k}} (\widetilde{C}_n, \widetilde{j}_{m,n} ).
\end{equation*}

We have the structure of the core of a $C^*$-algebra $C^*(\psi )$ for a CNP-representation $\psi$ of a compactly aligned product system over $\mathbb{N}^{\oplus k}$. 

\begin{thm}
\label{thm:analysiscore}
\normalfont\slshape
Let $X$ be a compactly aligned product system of Hilbert $A$-bimodules over $\mathbb{N}^{\oplus k}$, and $\psi$ be an injective CNP-covariant representation of $X$. 
Then
\begin{equation*}
\widetilde{\kappa}^\psi:=\varinjlim_{n \in \mathbb{N}^{\oplus k}} \widetilde{\kappa}_n^\psi : \varinjlim_{n \in \mathbb{N}^{\oplus k}} \widetilde{C}_n \longrightarrow C^*(\psi )^{\mathrm{core}}
\end{equation*}
is isometry.
\end{thm}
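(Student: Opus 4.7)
The plan is to identify $\widetilde{\kappa}^\psi$ as the inductive limit of a compatible family of injective *-homomorphisms; such a limit is automatically isometric. First I would verify the compatibility $\widetilde{\kappa}_m^\psi \circ \widetilde{j}_{m,n} = \widetilde{\kappa}_n^\psi$ for all $n \le m$, which is immediate from the definitions: both maps send the element of $\widetilde{C}_n$ specified by coefficients $(S_p)_{p \le n}$ to $\sum_{p \le n} \psi^{(p)}(S_p) \in C^*(\psi)^{\mathrm{core}}$, while the bonding map $\widetilde{j}_{m,n}$ amounts to truncating the index set $\{q : n \le q\}$ to $\{q : m \le q\}$ without altering the coefficients $(S_p)$. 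By the universal property of the inductive limit, this compatible family induces a well-defined *-homomorphism $\widetilde{\kappa}^\psi$ from $\varinjlim_{n \in \mathbb{N}^{\oplus k}} \widetilde{C}_n$ into $C^*(\psi)^{\mathrm{core}}$ whose restriction to each stage $\widetilde{C}_n$ agrees with $\widetilde{\kappa}_n^\psi$.

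For the isometry claim I would invoke Corollary \ref{cor:kappainj}: since $\psi$ is injective, each $\widetilde{\kappa}_n^\psi$ is an injective *-homomorphism between $C^*$-algebras, hence isometric. Corollary \ref{cor:j-inj} gives the analogous conclusion for the bonding maps $\widetilde{j}_{m,n}$. The standard $C^*$-inductive-limit argument then applies: any $x \in \varinjlim_n \widetilde{C}_n$ is represented by some $x_n \in \widetilde{C}_n$, the norms $\|\widetilde{j}_{m,n}(x_n)\|$ are constant in $m \ge n$ and equal to $\|x\|$ by construction of the limit norm, and applying the isometric $\widetilde{\kappa}_m^\psi$ preserves this common value. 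Passing to the limit yields $\|\widetilde{\kappa}^\psi(x)\| = \|\widetilde{\kappa}_n^\psi(x_n)\| = \|x_n\| = \|x\|$, which is the desired isometry.

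Although the theorem only asserts isometry, it is natural to note that $\widetilde{\kappa}^\psi$ actually maps onto $C^*(\psi)^{\mathrm{core}}$: every spanning element $\psi_n(\xi)\psi_n(\eta)^* = \psi^{(n)}(\theta_{\xi,\eta})$ of the core is the image under $\widetilde{\kappa}_n^\psi$ of the element of $\widetilde{C}_n$ with $S_n = \theta_{\xi,\eta}$ and $S_p = 0$ for $p < n$, and an isometric *-homomorphism has closed range. I do not anticipate any genuine obstacle at this stage: the substantive content, namely injectivity of $\widetilde{\kappa}_n^\psi$ and of $\widetilde{j}_{m,n}$, has already been extracted in Proposition \ref{prop:kappa3} and Proposition \ref{prop:j-inj}, so the proof of the theorem reduces to assembling those results inside the routine $C^*$-inductive-limit framework.
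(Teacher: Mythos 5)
Your proposal is correct and follows essentially the same route as the paper: verify the compatibility $\widetilde{\kappa}_m^\psi \circ \widetilde{j}_{m,n} = \widetilde{\kappa}_n^\psi$, invoke Corollary \ref{cor:kappainj} for injectivity of each $\widetilde{\kappa}_n^\psi$ (with Corollary \ref{cor:j-inj} supplying injectivity of the bonding maps), and conclude by the standard fact that an inductive limit of injective *-homomorphisms is isometric. The additional remark on surjectivity onto $C^*(\psi)^{\mathrm{core}}$ is accurate, though not required for the stated claim.
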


\begin{proof}
Since we assume $\psi$ is an injective CNP-covariant representation, $\widetilde{\kappa}_n^\psi$ is injective by Corollary \ref{cor:kappainj}. We can check easily $\widetilde{\kappa}_m^\psi \circ \widetilde{j}_{m,n} = \widetilde{\kappa}_n^\psi$ for $n \le m$ in $\mathbb{N}^{\oplus k}$. 
Hence we obtain this theorem.
\end{proof}

Let $X$ be a product system of Hilbert $A$-bimodules over $\mathbb{N}^{\oplus k}$. 
For the universal CNP-covariant representation $j_X : X \longrightarrow \mathcal{NO}_X$, there is an action $\gamma$ of $\mathbb{T}^k=\widehat{\mathbb{Z}^{\oplus k}}$ such that $\gamma_z(j_X(\xi ))=z^n j_X(\xi )$ for $\xi \in X_n$ by the universality. 
Then $\gamma$ is strongly contiunuous and we can define a linear map by
\begin{equation*}
E(x)=\int_{\mathbb{T}^k} \gamma_z (x) dz
\end{equation*}
for $x \in \mathcal{NO}_X$, where $dz$ is the normalized Haar measure. 
Then we have $E(\psi_n (\xi )\psi_m (\eta )^*)=\delta_{n,m} \psi_n (\xi )\psi_m (\eta )^*$ for $\xi \in X_n$, $\eta \in X_m$. Hence $E$ is a faithful conditional expectation onto $C^*(j_X)^{\mathrm{core}}$.
Using this conditional expectation, we can easily check that the core $C^*(j_X)^{\mathrm{core}}$ coincides with the fixed point algebra $\mathcal{NO}_X^\gamma$.

\begin{cor}
\label{cor:core}
\normalfont\slshape
Let $X$ be a compactly aligned product system of Hilbert $A$-bimodules over $\mathbb{N}^{\oplus k}$. 
Let $\psi$ be an injective CNP-covariant representation. 
Then the restriction of the surjection $\Pi \psi :\mathcal{NO}_X \longrightarrow C^*(\psi )$ to the fixed point algebra $\mathcal{NO}_X^\gamma$ is injective.
\end{cor}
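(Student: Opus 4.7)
The plan is to transfer the problem through the explicit description of the core provided by Theorem \ref{thm:analysiscore}, applied simultaneously to the universal CNP-covariant representation $j_X$ and to the given $\psi$. Since the discussion immediately preceding the corollary identifies $\mathcal{NO}_X^\gamma$ with $C^*(j_X)^{\mathrm{core}}$, it is enough to prove that $\Pi\psi$ restricted to $C^*(j_X)^{\mathrm{core}}$ is injective, and this should follow because both cores are canonically isomorphic to the same inductive limit $\varinjlim_n \widetilde{C}_n$.

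First I would invoke the remark (recorded when $\mathcal{NO}_X$ was defined) that $j_X$ is injective for product systems fibred by $\mathbb{N}^{\oplus k}$. Theorem \ref{thm:analysiscore} then applies to both $j_X$ and $\psi$ and yields two isometric $*$-homomorphisms
\begin{equation*}
\widetilde{\kappa}^{j_X} : \varinjlim_{n \in \mathbb{N}^{\oplus k}} \widetilde{C}_n \longrightarrow C^*(j_X)^{\mathrm{core}} = \mathcal{NO}_X^\gamma,
\qquad
\widetilde{\kappa}^\psi : \varinjlim_{n \in \mathbb{N}^{\oplus k}} \widetilde{C}_n \longrightarrow C^*(\psi )^{\mathrm{core}}.
\end{equation*}
Each is surjective onto the respective core: the image already contains every $j_X^{(p)}(S_p)$, respectively $\psi^{(p)}(S_p)$, by taking a single summand, and such elements span a dense subspace of the core. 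Since each $\widetilde{\kappa}$ is an isometry, its closed image is all of the corresponding core, so both $\widetilde{\kappa}^{j_X}$ and $\widetilde{\kappa}^\psi$ are $*$-isomorphisms onto their targets.

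Next I would verify the compatibility $\Pi\psi \circ \widetilde{\kappa}^{j_X} = \widetilde{\kappa}^\psi$. On a generator $\bigl( \sum_{p \le n} \widetilde{\iota}_p^q(S_p) \bigr)_{q \ge n} \in \widetilde{C}_n$, the map $\widetilde{\kappa}^{j_X}$ returns $\sum_{p \le n} j_X^{(p)}(S_p)$; the identity $\Pi\psi \circ j_X = \psi$ upgrades to $\Pi\psi \circ j_X^{(p)} = \psi^{(p)}$ on $\mathcal{K}(X_p)$ (applied to rank-one operators $\theta_{\xi,\eta}$), so $\Pi\psi$ sends this to $\sum_{p \le n} \psi^{(p)}(S_p)$, which is exactly $\widetilde{\kappa}^\psi$ of the same element. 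It follows that
\begin{equation*}
\Pi\psi\big|_{\mathcal{NO}_X^\gamma} \; = \; \widetilde{\kappa}^\psi \circ \bigl(\widetilde{\kappa}^{j_X}\bigr)^{-1},
\end{equation*}
which is a composition of two injective $*$-homomorphisms and is therefore itself injective (in fact isometric).

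The argument is really a transport-of-structure, and I do not expect a serious obstacle: the technical content has already been absorbed into Theorem \ref{thm:analysiscore} (which in turn rests on Corollary \ref{cor:kappainj} and Corollary \ref{cor:j-inj}). The only point that needs checking with care is that both $\widetilde{\kappa}^{j_X}$ and $\widetilde{\kappa}^\psi$ are defined by the same formula on $\widetilde{C}_n$, so that the diagram involving $\Pi\psi$ commutes on the dense set of generators; once this is verified, continuity of $\Pi\psi$ extends the compatibility to all of $\varinjlim_n \widetilde{C}_n$ and the conclusion follows.
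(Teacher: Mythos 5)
Your argument is correct and is exactly the route the paper takes: the paper's proof is the one-line observation that $j_X$ is injective so Theorem \ref{thm:analysiscore} applies to both $j_X$ and $\psi$, and you have simply filled in the details (surjectivity of the two isometries onto the respective cores and the compatibility $\Pi\psi\circ\widetilde{\kappa}^{j_X}=\widetilde{\kappa}^\psi$) that the paper leaves implicit.
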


\begin{proof}
Since the universal CNP-covariant representation $j_X$ is injective, we get this corollary by Theorem \ref{thm:analysiscore}
\end{proof}

%%%%%%%%%%%%%%%%%%%%%%%%%%%%%%%%%%
% Cuntz-Krieger Uniqueness
%%%%%%%%%%%%%%%%%%%%%%%%%%%%%%%%%%

\section{Cuntz-Krieger type uniqueness theorem for topological higher-rank graph $C^*$-algebras}
\label{sec:CK-Unique}

In this section, we investigate product system $C^*$-algebras associated with topological higher-rank graphs. 
In particular, we shall prove Cuntz-Krieger type uniqueness theorem for compactly aligned topological $k$-graph in the sense of Yeend (\cite{Yeend1}, \cite{Yeend2}) under certain aperiodic condition. We also refer to \cite[Section 5]{Carlsen-Larsen-Sims-Vittadello}. In the case that $\Lambda$ is row-finite without sources, it has done in \cite[Section 3]{Yamashita1}. 

 First, we set up the notations.
 For a locally compact (Hausdorff) space $\Omega$, we denote by $C(\Omega )$ the linear space of all continuous functions on $\Omega$. We define $C_c(\Omega )$, $C_0(\Omega )$, $C_b(\Omega )$ by those of compactly supported functions, functions vanishing at infinity, and bounded functions, respectively.

We say that $(\Lambda ,d)$ is a \textit{topological $k$-graph} if 
(1) a small category $\Lambda$ which has a locally compact topology;   
(2) the range map $r$ and the source map $s$ are continuous and the source map $s$ is locally homeomorphic;  
(3) the composition map of $\Lambda$ is continuous and open; 
(4) the degree map $d : \mathbb{N}^{\oplus k} \longrightarrow \Lambda$ is continuous where we endow $\mathbb{N}^{\oplus k}$ with the discrete topology;  
(5) for all $\lambda \in \Lambda$ and $m, n \in \mathbb{N}^{\oplus k}$ such that $d(\lambda )=m+n$, there exists unique $\mu , \nu \in \Lambda$ such that $d(\mu )=m ,\ d( \nu )=n$, and $\lambda = \mu \nu $. 
By the property (5) of $\Lambda$, for $0 \le n \le m \le l$ in $\mathbb{N}^{\oplus k}$ and $\lambda \in \Lambda^l$, there are unique $\lambda (0,n) \in \Lambda^n$, $\lambda (n,m) \in \Lambda^{m-n}$ and $\lambda (m,l) \in \Lambda^{l-m}$ such that $\lambda = \lambda (0,n) \lambda (n,m) \lambda (m,l)$. Define $\mathrm{Seg}_{(n,m)}^l : \Lambda^l \longrightarrow \Lambda^{m-n}$ by $\mathrm{Seg}_{(n,m)}^l(\lambda )=\lambda (n,m)$.

For $m \in \mathbb{N}^{\oplus k}$, we define $\Lambda^m =d^{-1}(\{ m\} )$ and $r_m=r|_{\Lambda^m}$, $s_m=s|_{\Lambda^m}$. For $U,V \subset \Lambda$, we write $UV=\{ \lambda \mu \ | \ \lambda \in U, \mu \in V , s(\lambda )=r(\mu ) \}$. 
For $U \subset \Lambda^m$, $V \subset \Lambda^n$, define
\begin{equation*}
U \vee V= U\Lambda^{m \vee n -m} \cap V\Lambda^{m \vee n - n}.
\end{equation*}

Next we shall construct a product system over $\mathbb{N}^{\oplus k}$ from a topological $k$-graph $\Lambda$. 
Set $A=C_0(\Lambda^0)$ and for $n \in \mathbb{N}^{\oplus k}$, let $X_n$ be the Hilbert $A$-bimodule associated to the topological graph $(\Lambda^0, \Lambda^n , r|_{\Lambda^n} ,s|_{\Lambda^n})$ in the sense of Katsura \cite[Definition 2.1]{Katsura1}. 
$X_n$ is the completion of the pre Hilbert $A$-bimodule $X_n^{\mathrm{cpt}}:=C_c(\Lambda^n)$ with
\begin{equation*}
\langle \xi ,\eta \rangle^n_A (v)=\sum_{\lambda \in \Lambda^nv} \overline{\xi (\lambda )} \eta (\lambda ),\quad (a \xi b)(\lambda )=a(r(\lambda )) \xi (\lambda ) b( s(\lambda )) 
\end{equation*}
for $n \in \mathbb{N}^{\oplus k}$, $a,b \in A$, $\xi, \eta \in X_n^{\mathrm{cpt}}$ $v \in \Lambda^0$ and $ \lambda \in \Lambda^n$. 
Then $X=\sqcup_{n \in \mathbb{N}^{\oplus k}} X_n$ is a product system over $\mathbb{N}^{\oplus k}$ (see \cite[Proposition 5.9]{Carlsen-Larsen-Sims-Vittadello}).

We say a topological $k$-graph $\Lambda$ is compactly aligned if $U \vee V$ is compact whenever $U$ and $V$ are compact. In \cite[Proposition 5.15]{Carlsen-Larsen-Sims-Vittadello}, it is shown that a topological $k$-graph $\Lambda$ is compactly aligned if and only if the product system arising from $\Lambda$ is compactly aligned. 

Next we define some terms for convenience.
\begin{df}
Given finitely many functions $\xi_1,\cdots ,\xi_L , \eta_1 ,\cdots \eta_L$ of $C_c(\Lambda^m)$, we say $\{ ( \xi_i,\eta_i ) \}_{i=1}^L$ is a \textit{pair of orthogonal functions} for degree $m$ if for any $i=1,\cdots ,L$, $\xi_i(\lambda ) \overline{\eta_i (\lambda ' )}=0$ for $s(\lambda )=s(\lambda ')$ and $\lambda \neq \lambda '$. 
For a set $\Omega \subset \Lambda^m$ and $u_1,\cdots ,u_L \in C_c(\Lambda^m)$, $\{u_i\}_{i=1}^L$ is a \textit{partition of unity} for $\Omega$ if $u_i$ satisfies $0 \le u_i \le 1$, $\sum_{i=1}^L u_i^2(\lambda )=1$ for $\lambda \in \Omega$, and the restriction of $s_m$ to the support $\mathrm{supp}(u_i)$ of $u_i$ is injective. In partcular, $\{ (u_i,u_i) \}_{i=1}^L$ is an pair of orthogonal functions.
\end{df}

\begin{lem}
\normalfont\slshape
\label{lem:unity}
Let $\Lambda$ be a topological $k$-graph and $X$ be the product system arising from $\Lambda$.
\begin{enumerate}
 \item{For $\xi \in C_c(\Lambda^m)$, there exists a partition of unity $\{u_i\}_{i=1}^L$ for the compact support $\mathrm{supp}(\xi )$ of $\xi$.}
 \item{If $\xi \in X_m$ and $\{ u_i\}_{i=1}^L$ is a partition of unity for the support $\mathrm{supp}(\xi )$ of $\xi $, then $\xi=\sum_{i=1}^L u_i \langle u_i , \xi \rangle $.}
\end{enumerate}
\end{lem}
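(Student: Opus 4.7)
The plan rests entirely on the local-homeomorphism property of $s_m = s|_{\Lambda^m}$.

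For (1), each point $\lambda \in \mathrm{supp}(\xi)$ admits an open neighborhood $V_\lambda \subset \Lambda^m$ on which $s_m$ is injective. Since $\mathrm{supp}(\xi)$ is compact, I extract a finite subcover $V_1, \dots, V_L$. Invoking a standard locally compact partition of unity subordinate to this cover, I obtain $f_1, \dots, f_L \in C_c(\Lambda^m)$ with $0 \le f_i \le 1$, $\mathrm{supp}(f_i) \subset V_i$, and $\sum_i f_i \equiv 1$ on $\mathrm{supp}(\xi)$. Setting $u_i := \sqrt{f_i}$ preserves continuity and the bound $0 \le u_i \le 1$, gives $\sum_i u_i^2 \equiv 1$ on $\mathrm{supp}(\xi)$, and since $\mathrm{supp}(u_i) = \mathrm{supp}(f_i) \subset V_i$, the restriction $s_m|_{\mathrm{supp}(u_i)}$ is injective as required.

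For (2), the idea is to reduce the defining sum for $\langle u_i, \xi\rangle$ to a single term and then verify the identity pointwise. Viewing $\xi \in X_m$ as a continuous function on $\Lambda^m$ via Katsura's identification, the injectivity of $s_m|_{\mathrm{supp}(u_i)}$ ensures that
\[
\langle u_i, \xi\rangle(v) = \sum_{\mu \in \Lambda^m v} \overline{u_i(\mu)}\,\xi(\mu)
\]
has at most one nonzero summand. Consequently, for any $\lambda \in \Lambda^m$,
\[
(u_i \langle u_i, \xi\rangle)(\lambda) = u_i(\lambda)\,\langle u_i, \xi\rangle(s(\lambda)) = u_i(\lambda)^2 \xi(\lambda),
\]
since whenever $u_i(\lambda) \ne 0$ the element $\lambda$ is itself the unique representative of $\mathrm{supp}(u_i) \cap \Lambda^m s(\lambda)$, and the identity is trivial if $u_i(\lambda) = 0$. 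Summing over $i$ yields $\sum_i (u_i \langle u_i, \xi\rangle)(\lambda) = \bigl(\sum_i u_i(\lambda)^2\bigr)\xi(\lambda)$, which equals $\xi(\lambda)$ on $\mathrm{supp}(\xi)$ by the partition-of-unity property and equals $0 = \xi(\lambda)$ off $\mathrm{supp}(\xi)$. Since the sum on the left is finite, this pointwise identity coincides with the equality in $X_m$.

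The main obstacle is really just the standard Urysohn-type care needed in (1) to ensure $\mathrm{supp}(f_i)$ sits inside the chosen neighborhood of injectivity $V_i$; once that geometric setup has been arranged, the identity in (2) is a direct consequence of local injectivity and the definition of the $A$-valued inner product.
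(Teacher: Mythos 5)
Your proof is correct and follows essentially the same route as the paper: cover $\mathrm{supp}(\xi)$ by finitely many open sets on which $s_m$ is injective, take square roots of a subordinate partition of unity for (1), and use local injectivity of $s_m$ to collapse the inner-product sum to a single term for the pointwise verification in (2). Your explicit appeal to Katsura's identification of elements of $X_m$ with continuous functions is a slightly more careful phrasing of the same computation the paper performs.
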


\begin{proof}
(1) Take $\xi \in C_c(\Lambda^m)$. Since $s$ is a local homeomorphism, for each $\lambda \in \Lambda^m$ there exists an relative compact open neighborhood $U_\lambda$ of $\lambda$ such that the restriction of $s_m$ to $U_\lambda$ is injective. Since $\mathrm{supp}(\xi )$ is compact, we can find $\lambda_1 ,\cdots ,\lambda_L \in \Lambda^m$ such that $\mathrm{supp}(\xi ) \subset \cup_{i=1}^L U_{\lambda_i}$. Take functions $v_1,\cdots , v_L$ satisfying $0 \le v_i \le 1$, $\mathrm{supp}(v_i) \subset U_{\lambda_i}$ for each $1 \le i \le L$, and $\sum_{i=1}^L v_i(\lambda )=1$ for all $\lambda \in \mathrm{supp}(\xi )$. Set $u_i:=v_i^{1/2}$. Then $\{ u_i \}_{i=1}^L$ is a partition of unity for $\mathrm{supp}(\xi )$.\\
(2) Take $\xi \in X_m$ and a partition of unity $\{ u_i\}_{i=1}^L$ for $\mathrm{supp}(\xi )$. For $\lambda \in \Lambda^m$,
\begin{equation*}
\Bigl( \sum_{i=1}^L u_i \langle u_i , \xi \rangle \Bigr) (\lambda )
=\sum_{i=1}^L u_i(\lambda ) \Bigl( \sum_{s(\mu )=s(\lambda )}u_i(\mu ) \xi (\mu )  \Bigr)
=\sum_{i=1}^L u_i(\lambda )^2 \xi (\lambda ) = \xi (\lambda ). 
\end{equation*}
\end{proof}

\begin{lem}
\label{lem:turn1}
\normalfont\slshape
Let $\Lambda$ be a topological $k$-graph and $X$ be the product system arising from $\Lambda$, and $\psi$ be a representation of $X$.  
Let $\{u_i\}_{i=1}^L$ be a partition of unity for the support $\mathrm{supp}(\xi)$ of $\xi \in X_n$. Then
\begin{equation*}
\sum_{i=1}^L \psi_n(u_i)x\psi_n(u_i)^*\psi_n(\xi )=\psi_n(\xi ) x
\end{equation*}
for all elements $x$ of the relative commutant algebra $\psi_0(A)' \cap C^*(\psi)$.
\end{lem}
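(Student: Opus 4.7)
The plan is to use Lemma \ref{lem:unity}(2) to rewrite $\xi$ in terms of the partition of unity, apply $\psi_n$, and then exploit the fact that $x$ commutes with $\psi_0(A)$.

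First I would invoke Lemma \ref{lem:unity}(2) to write
\begin{equation*}
\xi = \sum_{j=1}^L u_j \langle u_j, \xi \rangle_A^n.
\end{equation*}
Applying $\psi_n$ and using the standard identity $\psi_n(\eta \cdot a) = \psi_n(\eta)\psi_0(a)$ (which follows from the representation axioms $\psi_n(\eta)^*\psi_n(\zeta) = \psi_0(\langle\eta,\zeta\rangle)$ by taking adjoints), I obtain
\begin{equation*}
\psi_n(\xi) = \sum_{j=1}^L \psi_n(u_j)\psi_0(\langle u_j, \xi \rangle_A^n).
\end{equation*}

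Next, I would multiply on the right by $x$ and commute $x$ past the inner-product terms. Since $\psi_0(\langle u_j, \xi\rangle_A^n) \in \psi_0(A)$ and $x \in \psi_0(A)' \cap C^*(\psi)$, these elements commute, so
\begin{equation*}
\psi_n(\xi)\, x = \sum_{j=1}^L \psi_n(u_j)\,\psi_0(\langle u_j, \xi\rangle_A^n)\, x = \sum_{j=1}^L \psi_n(u_j)\, x\, \psi_0(\langle u_j, \xi\rangle_A^n).
\end{equation*}
Finally, I would recognize $\psi_0(\langle u_j,\xi\rangle_A^n) = \psi_n(u_j)^* \psi_n(\xi)$ by the representation axiom, yielding
\begin{equation*}
\psi_n(\xi)\, x = \sum_{j=1}^L \psi_n(u_j)\, x\, \psi_n(u_j)^* \psi_n(\xi),
\end{equation*}
which is exactly the desired identity.

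There is no real obstacle here: the argument is essentially a single-line manipulation once one observes that the partition-of-unity reconstruction of $\xi$ from Lemma \ref{lem:unity}(2) produces factors $\psi_0(\langle u_j, \xi\rangle_A^n)$ lying in $\psi_0(A)$, against which $x$ slides freely. The only subtle point worth stating explicitly in the write-up is the right-action identity $\psi_n(\eta a)=\psi_n(\eta)\psi_0(a)$, which is immediate from the representation axioms but is not listed verbatim in the definition.
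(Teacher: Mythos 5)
Your proof is correct and is essentially the paper's own argument run in the opposite direction: the paper starts from $\sum_i \psi_n(u_i)x\psi_n(u_i)^*\psi_n(\xi)$, converts $\psi_n(u_i)^*\psi_n(\xi)$ to $\psi_0(\langle u_i,\xi\rangle)$, slides $x$ to the right, and then invokes Lemma \ref{lem:unity}(2), which are exactly your steps in reverse. No substantive difference.
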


\begin{proof}
For $x \in \psi_0(A)' \cap C^*(\psi)$,
\begin{eqnarray*}
\sum_{i=1}^L \psi_n(u_i)x\psi_n(u_i)^*\psi_n(\xi )
&=&\sum_{i=1}^L \psi_n(u_i)x\psi_0(\langle u_i, \xi \rangle)
=\sum_{i=1}^L \psi_n(u_i)\psi_0(\langle u_i, \xi \rangle)x \\
&=&\psi_n(\sum_{i=1}^L u_i \langle u_i, \xi \rangle )x.
\end{eqnarray*}
Since $\{u_i\}_{i=1}^L$ is a partition of unity for $\mathrm{supp}(\xi)$, we obtain $\sum_{i=1}^L u_i \langle u_i, \xi \rangle =\xi$ by Lemma \ref{lem:unity}. 
\end{proof}

Let $\Lambda$ be a topological $k$-graph. 
For $m \in \mathbb{N}^{\oplus k}$, let us define an injective *-homomorphism $\pi_m :C_b(\Lambda^m ) \longrightarrow \mathcal{L}(X_m)$ by
\begin{equation*}
(\pi_m(Q)\xi )(\lambda )=Q(\lambda ) \xi (\lambda ), \quad Q \in C_b(\Lambda^m),\ \lambda \in \Lambda^m. 
\end{equation*}
The following lemma is proved by Katsura (\cite[Lemma 1.16, 1.17]{Katsura1})

\begin{lem}
\label{lem:katsura}
\normalfont\slshape
Let $\Lambda$ be a topological $k$-graph. 
For each $m \in \mathbb{N}$, the image $\pi_m(C_0(\Lambda^m))$ of $C_0(\Lambda^m)$ is included in the $C^*$-algebra $\mathcal{K}(X_m)$. 
Given $Q \in C_c(\Lambda^m)$, there exists a pair of orthogonal functions $\{(\xi_i, \eta_i )\}_{i=1}^L$ such that
\begin{equation*}
Q=\sum_{i=1}^L \xi_i \overline{\eta_i} ,\quad \mathrm{and} \quad \pi_m(Q)=\sum_{i=1}^L \theta_{\xi_i , \eta_i}.
\end{equation*}
\end{lem}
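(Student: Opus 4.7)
The first assertion $\pi_m(C_0(\Lambda^m)) \subset \mathcal{K}(X_m)$ should follow immediately from the second: once every $\pi_m(Q)$ with $Q \in C_c(\Lambda^m)$ is exhibited as a finite sum of rank-one operators, norm-density of $C_c(\Lambda^m)$ in $C_0(\Lambda^m)$, boundedness of $\pi_m$ (which is a $*$-homomorphism, so contractive), and closedness of $\mathcal{K}(X_m)$ in $\mathcal{L}(X_m)$ complete the argument. So the whole content lies in the second assertion, and I would concentrate there.

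Fix $Q \in C_c(\Lambda^m)$ and invoke Lemma \ref{lem:unity}(1) to produce a partition of unity $\{u_i\}_{i=1}^L \subset C_c(\Lambda^m)$ for the compact support $\mathrm{supp}(Q)$, that is, $0 \le u_i \le 1$, $\sum_i u_i^2 \equiv 1$ on $\mathrm{supp}(Q)$, and $s_m$ injective on each $\mathrm{supp}(u_i)$. I would then set $\xi_i := u_i Q$ and $\eta_i := u_i$, both in $C_c(\Lambda^m)$. For the orthogonality condition of the pair $\{(\xi_i,\eta_i)\}$, the product $\xi_i(\lambda)\overline{\eta_i(\lambda')} = u_i(\lambda)Q(\lambda)u_i(\lambda')$ must vanish whenever $s(\lambda)=s(\lambda')$ and $\lambda \neq \lambda'$; but this is exactly the injectivity of $s_m|_{\mathrm{supp}(u_i)}$, since both $u_i$-factors cannot be simultaneously nonzero. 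The pointwise identity $Q = \sum_i u_i^2 Q = \sum_i \xi_i\overline{\eta_i}$ is then immediate from the partition of unity (outside $\mathrm{supp}(Q)$ the factor $Q$ kills everything).

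For the operator identity $\pi_m(Q) = \sum_i \theta_{\xi_i,\eta_i}$ I would test against $\zeta \in C_c(\Lambda^m)$ (dense in $X_m$) and evaluate at $\lambda \in \Lambda^m$:
\[
\theta_{\xi_i,\eta_i}(\zeta)(\lambda) \;=\; u_i(\lambda)\,Q(\lambda)\!\!\sum_{\mu \in \Lambda^m s(\lambda)}\!\!\overline{u_i(\mu)}\,\zeta(\mu).
\]
The prefactor $u_i(\lambda)$ forces $\lambda \in \mathrm{supp}(u_i)$ whenever the expression is nonzero, and then the injectivity of $s_m$ on $\mathrm{supp}(u_i)$ collapses the $\mu$-sum to the single term $\mu = \lambda$, giving $u_i(\lambda)^2 Q(\lambda)\zeta(\lambda)$. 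Summing over $i$ and invoking $\sum_i u_i^2 \equiv 1$ on $\mathrm{supp}(Q)$ yields $Q(\lambda)\zeta(\lambda) = (\pi_m(Q)\zeta)(\lambda)$, as required.

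The main delicate point is exactly this collapse of the inner product sum in Step~3: one has to harness the local homeomorphism character of the source map (inherited by $s_m$ on the supports of the $u_i$) to reduce what is a priori a global sum over the fibre $\Lambda^m s(\lambda)$ to a single pointwise contribution. The partition of unity is the device that localises everything; once the sum collapses, the rest is pure bookkeeping with $\sum_i u_i^2 \equiv 1$.
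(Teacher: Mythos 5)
Your proof is correct. Note that the paper itself offers no proof of this lemma --- it simply defers to Katsura's Lemmas 1.16 and 1.17 in \cite{Katsura1} --- and your argument is essentially Katsura's original one: a partition of unity $\{u_i\}$ subordinate to a finite cover of $\mathrm{supp}(Q)$ by sets on which $s_m$ is injective, the choice $\xi_i = u_iQ$, $\eta_i = u_i$, and the collapse of the fibre sum $\sum_{\mu \in \Lambda^m s(\lambda)}$ to the single term $\mu = \lambda$. All the details check out, including the density/contractivity argument that upgrades the statement from $C_c(\Lambda^m)$ to $C_0(\Lambda^m)$.
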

 
\begin{lem}
\normalfont\slshape
Let $\Lambda$ be a topological $k$-graph and $X$ be the product system arising from $\Lambda$. Let $\psi$ be a representation of $X$.  
Set $\varphi_m^\psi=\psi^{(m)} \circ \pi_m :C_0(\Lambda^m ) \longrightarrow C^*(\psi )$. 
Then, for any $Q \in C_0(\Lambda^m)$, we have $\varphi_m^\psi(Q) \in \psi_0(A)' \cap C^*(\psi )$.
\end{lem}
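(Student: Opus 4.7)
The plan is to reduce the claim to the obvious commutation of $\pi_m(Q)$ and $\phi_m(a)$ inside $\mathcal{L}(X_m)$, and then push that commutation through $\psi^{(m)}$ using the standard representation identities. Since $\varphi_m^\psi$ is a norm-continuous *-homomorphism and $C_c(\Lambda^m)$ is dense in $C_0(\Lambda^m)$, it suffices to handle $Q \in C_c(\Lambda^m)$, and we only need to check $\psi_0(a)\varphi_m^\psi(Q) = \varphi_m^\psi(Q)\psi_0(a)$ for each $a \in A$.

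First I would observe that the explicit formulas $(\pi_m(Q)\xi)(\lambda) = Q(\lambda)\xi(\lambda)$ and $(\phi_m(a)\xi)(\lambda) = a(r(\lambda))\xi(\lambda)$, valid on the dense subspace $C_c(\Lambda^m) \subset X_m$, show at once that $\pi_m(Q)\phi_m(a) = \phi_m(a)\pi_m(Q)$ as operators on $X_m$, for any $a \in A$. Next, by Lemma \ref{lem:katsura} we may write $\pi_m(Q) = \sum_{i=1}^L \theta_{\xi_i,\eta_i}$ for some pair of orthogonal functions $\{(\xi_i,\eta_i)\}_{i=1}^L$, so in particular $\pi_m(Q) \in \mathcal{K}(X_m)$ and $\varphi_m^\psi(Q) = \sum_{i=1}^L \psi_m(\xi_i)\psi_m(\eta_i)^*$ belongs to $C^*(\psi)$.

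Finally, I would invoke the routine identities $\psi^{(m)}(\phi_m(a)S) = \psi_0(a)\psi^{(m)}(S)$ and $\psi^{(m)}(S\phi_m(a)) = \psi^{(m)}(S)\psi_0(a)$ for $S \in \mathcal{K}(X_m)$ and $a \in A$. These follow on rank-one operators from the representation axiom $\psi_m(\phi_m(a)\xi) = \psi_0(a)\psi_m(\xi)$ together with $\phi_m(a)\theta_{\xi,\eta} = \theta_{\phi_m(a)\xi,\eta}$ and $\theta_{\xi,\eta}\phi_m(a) = \theta_{\xi,\phi_m(a^*)\eta}$, then extend by linearity and continuity. Applying them with $S = \pi_m(Q)$ and using the commutativity already established yields
\[
\psi_0(a)\varphi_m^\psi(Q) = \psi^{(m)}\bigl(\phi_m(a)\pi_m(Q)\bigr) = \psi^{(m)}\bigl(\pi_m(Q)\phi_m(a)\bigr) = \varphi_m^\psi(Q)\psi_0(a),
\]
which is exactly the assertion. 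No step poses a real obstacle: the whole argument reduces to the pointwise fact that multiplication by $Q$ and multiplication by $a \circ r$ commute on $C_c(\Lambda^m)$, together with transparent properties of a representation, and the only mild care is in remembering that $\pi_m(Q)$ lies in $\mathcal{K}(X_m)$ so that $\psi^{(m)}$ is defined on it.
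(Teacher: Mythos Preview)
Your proof is correct and follows essentially the same route as the paper's: both reduce to the commutation $\phi_m(a)\pi_m(Q)=\pi_m(Q)\phi_m(a)$ in $\mathcal{L}(X_m)$ and push it through $\psi^{(m)}$ via the identity $\psi_0(a)\psi^{(m)}(S)=\psi^{(m)}(\phi_m(a)S)$ (and its adjoint). The only minor difference is that you pass to $C_c(\Lambda^m)$ first, which is unnecessary since Lemma~\ref{lem:katsura} already gives $\pi_m(C_0(\Lambda^m))\subset\mathcal{K}(X_m)$.
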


\begin{proof}
For $a \in A$ and $S \in \mathcal{K}(X_m)$, we can show $\psi_0(a)\psi^{(m)}(S)=\psi^{(m)}(\phi_m(a)S)$ (see \cite[Lemma 2.4]{Katsura2}).
For $Q \in C_0(\Lambda^m)$, we get
\begin{equation*}
\psi_0(a)\varphi_m^\psi(Q)=\psi_0(a) \psi^{(m)}( \pi_m(Q))=\psi^{(m)}(\phi_m(a) \pi_m(Q))=\psi^{(m)}(\pi_m(Q) \phi_m(a))=\varphi_m^\psi(Q) \psi_0(a).
\end{equation*}
\end{proof}

For $Q_1 \in C_b(\Lambda^m),\ Q_2 \in C_b(\Lambda^n)$, define a function $Q_1 \widehat{\otimes} Q_2 \in C_b(\Lambda^{m+n})$ by $(Q_1 \widehat{\otimes} Q_2)(\lambda )=Q_1(\lambda (0,m))Q_2(\lambda (m,m+n))$ for $\lambda \in \Lambda^{m+n}$.

\begin{lem}
\label{lem:turn2}
\normalfont\slshape
Let $\Lambda$ be a topological $k$-graph and $X$ be the product system arising from $\Lambda$. Let $\psi$ be a representation of $X$. 
Let $\{ (u_i,u_i) \}_{i=1}^L$ be a pair of orthogonal functions for degree $m$. Then for $Q \in C_0(\Lambda^n)$, we have
\begin{equation*}
\sum_{i=1}^L \psi_m(u_i)\varphi_n^\psi (Q)\psi_m(u_i)^* =\varphi_{m+n}^\psi (\sum_{i=1}^L |u_i|^2 \widehat{\otimes} Q).
\end{equation*}
\end{lem}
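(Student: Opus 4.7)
The plan is to reduce to the case $Q = \xi\overline{\eta}$ for a single pair of orthogonal functions $(\xi,\eta)$ of degree $n$, and then verify the identity by exploiting how the product system multiplication interacts with the pointwise product of functions.

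First I would argue by density. Since $C_c(\Lambda^n)$ is dense in $C_0(\Lambda^n)$ and $\pi_{m+n}$, the $\widehat{\otimes}$ operation, and the maps $\varphi_\bullet^\psi$ are all continuous, it suffices to prove the identity for $Q \in C_c(\Lambda^n)$. By Lemma \ref{lem:katsura}, any such $Q$ can be written as a finite sum $Q = \sum_j \xi_j \overline{\eta_j}$ where $\{(\xi_j,\eta_j)\}$ is a pair of orthogonal functions and $\pi_n(Q) = \sum_j \theta_{\xi_j,\eta_j}$. By linearity in $Q$ I can thus reduce to $Q = \xi\overline{\eta}$ with $(\xi,\eta)$ a single pair of orthogonal functions, so that $\pi_n(Q) = \theta_{\xi,\eta}$ and hence $\varphi_n^\psi(Q) = \psi_n(\xi)\psi_n(\eta)^*$.

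Second, I would use the product system structure to rewrite the left-hand side. Since $\psi$ is a representation of $X$, we have $\psi_m(u_i)\psi_n(\xi) = \psi_{m+n}(u_i\xi)$ and analogously for $u_i\eta$, where at the level of functions $(u_i\xi)(\lambda) = u_i(\lambda(0,m))\,\xi(\lambda(m,m+n))$ for $\lambda \in \Lambda^{m+n}$. Consequently the left-hand side becomes
\begin{equation*}
\sum_{i=1}^L \psi_{m+n}(u_i\xi)\psi_{m+n}(u_i\eta)^* = \sum_{i=1}^L \psi^{(m+n)}\bigl(\theta_{u_i\xi,\,u_i\eta}\bigr).
\end{equation*}

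The main verification (and the heart of the argument) is that $(u_i\xi,u_i\eta)$ is itself a pair of orthogonal functions for degree $m+n$: if $\lambda,\lambda' \in \Lambda^{m+n}$ with $s(\lambda)=s(\lambda')$ satisfy $(u_i\xi)(\lambda)\overline{(u_i\eta)(\lambda')} \ne 0$, then $\xi(\lambda(m,m+n))\overline{\eta(\lambda'(m,m+n))} \ne 0$ with $s(\lambda(m,m+n)) = s(\lambda'(m,m+n))$, so the orthogonality of $(\xi,\eta)$ forces $\lambda(m,m+n) = \lambda'(m,m+n)$; this in turn gives $s(\lambda(0,m)) = s(\lambda'(0,m))$, and the orthogonality of $(u_i,u_i)$ then forces $\lambda(0,m) = \lambda'(0,m)$, hence $\lambda = \lambda'$ by the factorization property. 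Invoking Lemma \ref{lem:katsura} in the reverse direction,
\begin{equation*}
\theta_{u_i\xi,\,u_i\eta} = \pi_{m+n}\bigl((u_i\xi)\cdot\overline{(u_i\eta)}\bigr),
\end{equation*}
and a pointwise calculation yields $(u_i\xi)(\lambda)\overline{(u_i\eta)(\lambda)} = |u_i(\lambda(0,m))|^2 \xi(\lambda(m,m+n))\overline{\eta(\lambda(m,m+n))} = (|u_i|^2 \widehat{\otimes} Q)(\lambda)$. Summing over $i$, applying $\psi^{(m+n)}$, and using linearity gives the right-hand side $\varphi_{m+n}^\psi(\sum_{i=1}^L |u_i|^2 \widehat{\otimes} Q)$. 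The main obstacle is the careful bookkeeping for the orthogonality step, since one has to chain together the two orthogonality hypotheses through the factorization $\lambda = \lambda(0,m)\lambda(m,m+n)$; once this is done, everything else is a direct computation.
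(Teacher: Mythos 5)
Your proposal is correct and follows essentially the same route as the paper: reduce to $Q\in C_c(\Lambda^n)$, decompose $Q$ via Lemma \ref{lem:katsura} into pairs of orthogonal functions, observe that the products $(u_i\xi_j,u_i\eta_j)$ again form a pair of orthogonal functions for degree $m+n$, and conclude by the pointwise identity $(u_i\xi_j)\overline{(u_i\eta_j)}=|u_i|^2\widehat{\otimes}(\xi_j\overline{\eta_j})$. The only difference is that you spell out the chaining of the two orthogonality hypotheses through the factorization $\lambda=\lambda(0,m)\lambda(m,m+n)$, which the paper asserts without proof.
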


\begin{proof}
It is enough to show for $Q \in C_c(\Lambda^m)$. 
By Lemma \ref{lem:katsura}, there is a pair of orthogonal functions $\{ (\xi_j, \eta_j) \}_{j=1}^M$ such that $Q= \sum_{j=1}^M \xi_j \overline{\eta_j}$. Then $\{ (u_i \otimes \xi_j, u_i \otimes \eta_j) \}_{1 \le i \le L,1 \le j \le M}$ is a pair of orthogonal functions for degree $m+n$ and
\begin{eqnarray*}
\sum_{i=1}^L \psi_m(u_i)\varphi_n^\psi(Q)\psi_m(u_i)^* 
&=&\sum_{i,j}\psi_{m+n}(u_i \otimes \xi_j) \psi_{m+n}(u_i \otimes \eta_j)^* \\
&=& \varphi^\psi_{m+n}(\sum_{i,j} (u_i \widehat{\otimes} \xi_j)\overline{(u_i \widehat{\otimes} \eta_j)} ) \\
&=&\varphi^\psi_{m+n}(\sum_{i=1}^L |u_i|^2 \widehat{\otimes} Q) .
\end{eqnarray*}
\end{proof}

Using the lemmas above, we obtain the following. 
\begin{lem}
\label{lem:turn3}
\normalfont\slshape
Let $\Lambda$ be a topological $k$-graph and $X$ be the product system arising from $\Lambda$. 
Let $\psi$ be a representation of $X$. 
Let $\{u_i\}_{i=1}^L$ be a partition of unity for the support $\mathrm{supp}(\xi)$ of $\xi \in X_m$ and $Q \in C_0(\Lambda^n)$. Then we have
\begin{equation*}
\psi_m(\xi ) \varphi_n^\psi(Q)=\sum_{i=1}^L \varphi_{m+n}^\psi(u_i^2 \widehat{\otimes} Q)\psi_m(\xi )
\end{equation*}
\end{lem}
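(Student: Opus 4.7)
The plan is to combine Lemma \ref{lem:turn1} and Lemma \ref{lem:turn2} in a very direct way. The key observation is that $\varphi_n^\psi(Q)$ lies in the relative commutant $\psi_0(A)' \cap C^*(\psi)$ by the lemma immediately preceding Lemma \ref{lem:turn2}, so it is an admissible choice for the element $x$ in Lemma \ref{lem:turn1}.

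First, I would apply Lemma \ref{lem:turn1} with $x = \varphi_n^\psi(Q)$ and the partition of unity $\{u_i\}_{i=1}^L$ for $\mathrm{supp}(\xi)$ (note that the lemma is stated for degree $n$, but of course works verbatim for any degree, in particular $m$). This yields
\begin{equation*}
\sum_{i=1}^L \psi_m(u_i)\, \varphi_n^\psi(Q)\, \psi_m(u_i)^*\, \psi_m(\xi) \;=\; \psi_m(\xi)\, \varphi_n^\psi(Q).
\end{equation*}

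Next, since $\{u_i\}_{i=1}^L$ is a partition of unity, the pairs $\{(u_i,u_i)\}_{i=1}^L$ form a pair of orthogonal functions for degree $m$ (as recorded in the definition), so Lemma \ref{lem:turn2} applies and gives
\begin{equation*}
\sum_{i=1}^L \psi_m(u_i)\, \varphi_n^\psi(Q)\, \psi_m(u_i)^* \;=\; \varphi_{m+n}^\psi\!\Bigl(\sum_{i=1}^L |u_i|^2 \widehat{\otimes} Q\Bigr).
\end{equation*}
Because each $u_i$ is real and nonnegative, $|u_i|^2 = u_i^2$, and by linearity of $\varphi_{m+n}^\psi$ the right-hand side equals $\sum_{i=1}^L \varphi_{m+n}^\psi(u_i^2 \widehat{\otimes} Q)$. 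Multiplying on the right by $\psi_m(\xi)$ and comparing with the first displayed equation concludes the proof.

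There is no real obstacle here; the lemma is essentially a one-line corollary once Lemmas \ref{lem:turn1} and \ref{lem:turn2} are in hand. The only tiny point to verify is that the hypotheses of Lemma \ref{lem:turn2} match: the partition of unity $\{u_i\}$ does give a pair of orthogonal functions (the supports $\mathrm{supp}(u_i)$ are sent injectively by $s_m$, so $u_i(\lambda)\overline{u_i(\lambda')}=0$ whenever $s(\lambda)=s(\lambda')$ and $\lambda \neq \lambda'$), and that Lemma \ref{lem:turn1} is invoked with degree $m$ in place of $n$, which is clearly a harmless relabeling.
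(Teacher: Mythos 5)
Your proof is correct and is exactly the argument the paper intends: the paper gives no written proof of Lemma \ref{lem:turn3} beyond the remark ``Using the lemmas above,'' and the combination you describe — apply Lemma \ref{lem:turn1} with $x=\varphi_n^\psi(Q)$ (admissible since $\varphi_n^\psi(Q)\in\psi_0(A)'\cap C^*(\psi)$ by the preceding lemma), then rewrite $\sum_i\psi_m(u_i)\varphi_n^\psi(Q)\psi_m(u_i)^*$ via Lemma \ref{lem:turn2} using that $\{(u_i,u_i)\}$ is a pair of orthogonal functions and $|u_i|^2=u_i^2$ — is precisely that argument.
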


Next, we introduce aperiodic condition which is used in Cuntz-Krieger type theorem. 
For $\lambda \in \Lambda$ and $p,q \in \mathbb{N}^{\oplus k}$ such that $p \le q$, there exists uniquely $\lambda (p,q) \in \Lambda^{p-q}$ such that $\lambda = \lambda_1 \lambda (p,q) \lambda_2$ where $\lambda_1 \in \Lambda^p$ and $\lambda \in \Lambda^{n-q}$. 
For $m \in (\mathbb{N} \cup \{ \infty \})^k$, define a (discrete) topological $k$-graph $\Omega_{k,m}$ by
\begin{equation*}
\Omega_{k,m}=\{ (p,q) \in \mathbb{N}^{\oplus k} \times \mathbb{N}^{\oplus k} \ | \ p \le q \le m \}, \  r(p,q)=p, \ s(p,q)=q
\end{equation*}
and $d(p,q)=q-p$. We denote $\Omega_{k,m}^0=\{ p \in \mathbb{N}^{\oplus k} \ | \ p \le m \}$.

\begin{df}
Let $\Lambda$ be a topological $k$-graph and $m \in (\mathbb{N} \cup \{ \infty \})^k$. 
We say a morphism $\alpha : \Omega_{k,m} \longrightarrow \Lambda$ is a \textit{boundary path} if $\alpha (n)\Lambda^{e_i}=\emptyset$ for $n \in \mathbb{N}^{\oplus k}$ with $n_{(i)}=m_{(i)}$. 
Let us define $d(\alpha )=m$ for a boundary path $\alpha$ and $\Lambda^{\le \infty}$ be the set of boundary paths. 
For $V \subset \Lambda^0$, we define $V\Lambda^{\le \infty }=\{ \alpha \in \Lambda^{\le \infty } \ | \ \alpha (0) \in V \}$.
\end{df}

\begin{lem}
\normalfont\slshape
For $1 \le k \le \infty$, $v\Lambda^{\le \infty}$ is non-empty.
\end{lem}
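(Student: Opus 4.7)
The plan is to apply Zorn's lemma to the poset $\mathcal{P}$ of partial paths at $v$: an element is a pair $(m, \alpha)$ with $m \in (\mathbb{N}\cup\{\infty\})^k$ and $\alpha : \Omega_{k,m} \to \Lambda$ a degree-preserving functor satisfying $\alpha(0) = v$, ordered by $(m, \alpha) \le (m', \alpha')$ iff $m \le m'$ componentwise and $\alpha'|_{\Omega_{k,m}} = \alpha$. Non-emptiness is immediate from the trivial pair $(0, \alpha_0)$ with $\alpha_0(0) = v$.

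For a totally ordered chain $\{(m^\lambda,\alpha^\lambda)\}_\lambda$, I would take the componentwise supremum $m^* := \sup_\lambda m^\lambda$ and patch the $\alpha^\lambda$ together. Since every $q \in \mathbb{N}^{\oplus k}$ has finite support, any $q \le m^*$ satisfies $q \le m^\lambda$ for some $\lambda$ in the chain; defining $\alpha^*(p, q) := \alpha^\lambda(p, q)$ gives a well-defined functor on $\Omega_{k, m^*}$ by chain compatibility, providing an upper bound in $\mathcal{P}$.

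By Zorn's lemma, $\mathcal{P}$ has a maximal element $(\bar m, \bar\alpha)$. I would then show $\bar\alpha$ is a boundary path by contradiction: if not, there exist a direction $i$ and a vertex $n \le \bar m$ with $n_{(i)} = \bar m_{(i)} < \infty$ and some $\mu \in \bar\alpha(n)\Lambda^{e_i}$, and my goal is to build a strict extension $(\bar m + e_i, \bar\alpha') \in \mathcal{P}$ above $(\bar m, \bar\alpha)$, contradicting maximality.

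The step I expect to be the main obstacle is precisely this extension. To define $\bar\alpha'$ on all of $\Omega_{k, \bar m + e_i}$ one must produce a coherent $e_i$-edge at $\bar\alpha(p)$ for \emph{every} $p \le \bar m$ with $p_{(i)} = \bar m_{(i)}$, not just at $n$. My plan is to use the unique factorization axiom (5) to propagate $\mu$ to the other top vertices: for each such $p$, one forms a composite of degree $\bar m - (n \wedge p) + e_i$ at $\bar\alpha(n \wedge p)$ using $\bar\alpha(n \wedge p, \bar m)$ together with $\mu$, and the reverse factorization of that composite supplies the required $e_i$-edge at $\bar\alpha(p)$. Continuity and openness of composition (axiom (3)) together with the local homeomorphism property of $s$ (axiom (2)) are then used to check that these edges are mutually compatible and assemble into a functor on $\Omega_{k, \bar m + e_i}$ extending $\bar\alpha$, completing the contradiction.
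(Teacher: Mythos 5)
The Zorn setup and the chain/upper-bound step are fine (the finite support of elements of $\mathbb{N}^{\oplus k}$ does make the patching of a chain work), but the extension step --- which you yourself flag as the crux --- does not go through, and this is a genuine gap rather than a technicality. First, the composite you propose does not typecheck: $\bar\alpha(n\wedge p,\bar m)$ has source $\bar\alpha(\bar m)$ while $\mu$ has range $\bar\alpha(n)$, so the two compose only when $n=\bar m$ (and $\bar\alpha(\bar m)$ does not even exist once $\bar m$ has an infinite coordinate). More seriously, unique factorization only lets you propagate an $e_i$-edge \emph{downwards}: from $\mu\in\bar\alpha(n)\Lambda^{e_i}$, factoring $\bar\alpha(p,n)\mu$ yields an $e_i$-edge at $\bar\alpha(p)$ for face vertices $p\le n$, but to reach a face vertex $p\not\le n$ you need a common extension of $\mu$ and $\bar\alpha(n,n\vee p)$, i.e.\ an element of $\{\mu\}\vee\{\bar\alpha(n,n\vee p)\}$, and in a general topological $k$-graph (no row-finiteness, no finite alignment assumed here) this set can be empty. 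Consequently a maximal element of your poset need not be a boundary path: it can happen that $\bar\alpha(n)\Lambda^{e_i}\neq\emptyset$ at some face vertex $n$ while no coherent family of $e_i$-edges over the whole $i$-face exists, so $\bar\alpha$ admits no extension in direction $i$ and the contradiction with maximality is unavailable. The same obstruction reappears in sharper form when $\bar m$ has an infinite coordinate: the $i$-face then has no top vertex, a coherent family is a point of an inverse limit of edge sets along factorization maps that need not be surjective, and such an inverse limit can be empty even when every individual edge set is non-empty.

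The paper's route is entirely different and is designed precisely to dodge this. For finite $k$ it invokes the interleaved construction of Raeburn--Sims--Yeend (enumerate the directions so that each recurs infinitely often and extend the finite path by a single edge at its source whenever possible; this never commits to an infinite extension in one direction before the others have been served), and for $k=\infty$ it exhibits $v\Lambda^{\le\infty}$ as the inverse limit of the non-empty sets $v\Lambda^{\le n}$ along \emph{surjective} connecting maps $\pi_{m,n}$, which is non-empty by Bourbaki. To rescue your argument you would essentially have to reprove that interleaved construction inside the maximality step, at which point Zorn's lemma is doing no work.
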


\begin{proof}
For the case $1 \le k <\infty$, for any $v \in \Lambda^0$, we can show that $v\Lambda^{\le \infty}$ is not empty by the same way of \cite[Lemma 2.11]{Raeburn-Sims-Yeend2}. 
But we remark that the method of \cite[Lemma 2.11]{Raeburn-Sims-Yeend2} can not be applied for the case  $k=\infty$. 
For $m \le n$, define a surjective map $\pi_{m,n} : v\Lambda^{\le n} \longrightarrow v\Lambda^{\le m}$ by
\[
\pi_{m,n}(\lambda )=
\left\{
  \begin{array}{ll}
  \lambda (0,m) & \mathrm{if\ } m \le d(\lambda ) \\
  \lambda (0,m') & \mathrm{otherwise}
  \end{array}
\right.
\]
where $m'$ is defined by $(m')_{(l)}=m_{(l)}$ if $m_{(l)} \le d(\lambda )_{(l)}$ and otherwise $(m')_{(l)}=d(\lambda )_{(l)}$. We can check $\pi_{m,n}$ is surjective. 
Then 
\begin{equation*}
v\Lambda^{\le \infty} \ni \alpha \longmapsto \{ \alpha (0,l)\}_{l \in \mathbb{N}^{\oplus \infty} , l \le d(\alpha )} \in \varprojlim_{n \in \mathbb{N}^{\oplus \infty}} (v\Lambda^{\le n} , \pi_{m,n} ) 
\end{equation*}
is bijective. By \cite[Proposition 5, p.198]{Bourbaki}, $\varprojlim_{n \in \mathbb{N}^{\oplus \infty}} (v\Lambda^{\le n} , \pi_{m,n} )$ is non-empty.
\end{proof} 

For $p \le \sigma^p\alpha$, define $\sigma^p : \Lambda^{\le \infty} \longrightarrow \Lambda^{\le \infty}$ so that $(\sigma^p\alpha )(m,n) = \alpha (m+p , n+p)$ for $m,n \in \mathbb{N}^{\oplus k}$ with $m \le n \le d(\alpha )-p$ .
We shall define aperiodic condition which is slightly stronger than \cite[Definition 5.1]{Yeend2}.

\begin{df}
\label{df:cond(A)}
We say $\Lambda$ satisfies \textit{aperiodic condition} if for a non-empty open set $V$, there are $v_0 \in V$ and $\alpha \in v_0 \Lambda^{\le \infty}$ such that 
\begin{align*}
p,q \in \mathbb{N}^{\oplus k} \mathrm{\ with}\ p\neq q \le d(\alpha ) \Longrightarrow \sigma^p (\alpha ) \neq \sigma^{q}(\alpha ) \tag{A}
\end{align*}
Remark that if $d(\alpha ) \notin \mathbb{N}^{\oplus k}$ then $\sigma^p (\alpha ) \neq \sigma^q(\alpha )$ means that there is $M=M_{p,q} \le d(\sigma^p (\alpha ) ) \wedge d(\sigma^q (\alpha ) )$ such that $\alpha (p , p +M) \neq \alpha (q ,q +M)$.
\end{df}

First we shall show that aperiodic condition implies with topological freeness which is defined in \cite[Definition 5.4]{Katsura1} when $k=1$.
We recall the definition of topological freeness.

Let $\Lambda$ be a topological 1-graph. 
For $n \ge 1$, $\lambda \in v\Lambda^n v$ is called a \textit{loop} and $v$ is called a \textit{base point} of $\lambda$.
A loop $\lambda \in v\Lambda^nv$ based at $v$ is said to be \textit{without entrances} if for $0 \le m \le n-1$, $\mu \in \Lambda^1$ satisfies $r(\mu )=r(\lambda (m,m+1))$, then $\mu = \lambda (m , m+1)$.

\begin{df}
Let $\Lambda$ be a topological 1-graph. For $n \ge 1$, define
\begin{equation*}
\mathrm{Per}_n:=\{ v \in \Lambda^0 \ |\ \mbox{Every }\lambda \in v\Lambda^n v \mbox{ has no entrance.} \}
\end{equation*}
and $\mathrm{Per}:=\cup_{n=1}^\infty \mathrm{Per}_n$. We say $\Lambda$ is \textit{topologically free} if the set of interior points of $\mathrm{Per}$ is the empty set.
\end{df}

\begin{prop}
\normalfont\slshape
Let $\Lambda$ be a topological 1-graph. Then if $\Lambda $ satisfies aperiodic condition, then $\Lambda$ is topologically free. 
\end{prop}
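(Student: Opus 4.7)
The plan is to argue the contrapositive: assuming $\Lambda$ is not topologically free, i.e.\ that $W := \mathrm{Int}(\mathrm{Per})$ is a non-empty open subset of $\Lambda^0$, I will exhibit a non-empty open $V \subset \Lambda^0$ on which every boundary path is periodic, violating~(A).

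The first step is to locate $v_0 \in W$ carrying a genuine loop without entrance. Writing $W = \bigcup_{n \ge 1}(W \cap \mathrm{Per}_n)$ and using that $W$ is Baire as an open subset of the locally compact Hausdorff space $\Lambda^0$, one of the sets $W \cap \mathrm{Per}_n$ is non-meager, so by shrinking $W$ I may assume $W \subset \mathrm{Per}_n$ for a fixed $n \ge 1$. I then choose $v_0 \in W$ with $v_0 \Lambda^n v_0 \neq \emptyset$ and $\lambda_0 \in v_0 \Lambda^n v_0$; the hypothesis $v_0 \in \mathrm{Per}_n$ forces $\lambda_0$ to be without entrance, so at each loop vertex $v_m := r(\lambda_0(m, m+1))$ the only outgoing edge is $\lambda_0(m, m+1)$. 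Uniqueness of outgoing edges along the loop forces $v_0 \Lambda^{\le \infty} = \{\alpha_0\}$ with $\alpha_0 := \lambda_0 \lambda_0 \lambda_0 \cdots$, and taking $p = 0$, $q = n$ yields $\sigma^p \alpha_0 = \sigma^q \alpha_0$, already refuting (A) at $v_0$.

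The second step is to promote $\{v_0\}$ to an open $V$. Because $s_n : \Lambda^n \to \Lambda^0$ is a local homeomorphism, an open neighborhood $U$ of $\lambda_0$ gives a continuous section $v \mapsto \lambda_v$ of $s_n$ on the open image $s_n(U)$. Combining continuity of the range map $r$ with the blanket hypothesis $W \subset \mathrm{Per}_n$, I expect that for $v$ in a sufficiently small neighborhood $V \subset W \cap s_n(U)$ of $v_0$, the path $\lambda_v$ is itself a loop at $v$ and is without entrance; therefore $v\Lambda^{\le\infty} = \{\lambda_v^\infty\}$ is $n$-periodic for every $v \in V$, and (A) fails on $V$.

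The main obstacle will be this stability step: the conditions ``$r(\lambda_v) = v$'' (loop) and ``$|v\Lambda^1| = 1$'' (no entrance) are not manifestly open on $v$ since the range map $r$ is only continuous (not a local homeomorphism), so the proof must leverage the rigidity imposed by $V \subset \mathrm{Per}_n$ --- which forces any actual length-$n$ loop at $v \in V$ to be without entrance --- in order to conclude from the candidate $\lambda_v$ that it really is such a loop. Two subsidiary caveats to handle: sinks lie vacuously in every $\mathrm{Per}_n$, so the first step implicitly requires $W$ to contain a non-sink vertex carrying an honest loop (a point at which an auxiliary hypothesis such as density of $r(\Lambda^1)$ in $\Lambda^0$ may be needed, since at a sink the only boundary path has $d(\alpha) = 0$ and trivially satisfies~(A)); and the Baire argument only produces non-meagerness of $W \cap \mathrm{Per}_n$ rather than an open subset of $\mathrm{Per}_n$ unless one separately verifies that $\mathrm{Per}_n$ is closed in $\Lambda^0$.
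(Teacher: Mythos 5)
Your contrapositive strategy can be made to work, but as written it has two genuine gaps, both of which you flag yourself and neither of which you close. First, the Baire category step: knowing that $W\cap\mathrm{Per}_n$ is non-meager for some $n$ does \emph{not} let you ``shrink $W$'' so that $W\subset\mathrm{Per}_n$; a non-meager set need not contain any non-empty open set unless it has additional regularity (e.g.\ $\mathrm{Per}_n$ closed, which you do not verify and which is not obvious, since the no-entrance condition is not a closed condition on the base point). Second, the promotion step: the local section $v\mapsto\lambda_v$ of $s_n$ only guarantees $s(\lambda_v)=v$; since the range map is merely continuous, there is no reason that $r(\lambda_v)=v$, i.e.\ that $\lambda_v$ is a loop at $v$ at all, and membership of $v$ in $\mathrm{Per}_n$ cannot rescue this under your reading of $\mathrm{Per}_n$ (in which a vertex with no loop of degree $n$ belongs to $\mathrm{Per}_n$ vacuously). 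Indeed, under that literal reading the proposition is simply false: a topological $1$-graph with no loops whatsoever has $\mathrm{Per}=\Lambda^0$ yet can satisfy the aperiodic condition. The definition must be read in Katsura's sense, namely $\mathrm{Per}_n=\{v\mid\text{there exists a loop }\lambda\in v\Lambda^nv\text{ without entrance}\}$.

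Once that reading is adopted, your entire apparatus becomes unnecessary: every $v$ in $W=\mathrm{Int}(\mathrm{Per})$ already carries, for some $n_v$, a loop $\lambda$ without entrance, so (as in your first step) $v\Lambda^{\le\infty}=\{\lambda\lambda\lambda\cdots\}$ and $\sigma^0\alpha=\sigma^{n_v}\alpha$ for the unique boundary path $\alpha$ at $v$; thus (A) fails at every point of the open set $W$, and no Baire argument or stability-of-loops argument is needed. The paper instead argues directly: given a non-empty open $V$, aperiodicity produces $v\in V$ and $\alpha\in v\Lambda^{\le\infty}$ satisfying (A); for any loop $\lambda\in v\Lambda^nv$ one has $\alpha\neq\lambda\alpha$ (otherwise $\alpha=\lambda\lambda\cdots$ and $\sigma^n\alpha=\alpha$), and the first place where $\alpha$ and $\lambda\alpha$ disagree exhibits an edge witnessing an entrance of $\lambda$; hence $v\notin\mathrm{Per}$ and $\mathrm{Per}$ has empty interior. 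That direct argument avoids all three of the difficulties your proposal runs into.
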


\begin{proof}
We assume $\Lambda$ satisfies aperiodic condition. 
For any non-empty open set $V$ of $\Lambda^0$, there are $v \in V$ and $\alpha \in v\Lambda^{\le \infty}$ such that $\alpha$ satisfies (A). We shall show that for any $n \ge 1$ and $\lambda \in v\Lambda^n v$, there is an entrance of $\lambda$. 
First we suppose $\alpha = \lambda \alpha$. 
Then we have $\alpha =\lambda \lambda \cdots$. But this contradicts to $\sigma^n\alpha \neq \alpha$. So we get $\alpha \neq \lambda \alpha$. 
This says that there is a minimal $M\ge 0$ such that $\alpha (0,M) \neq (\lambda \alpha )(0,M)$. 
If we suppose $M \le d(\lambda )$, then $\alpha (0, d(\lambda )) \neq \lambda $ and this says the existence of an entrance of $\alpha$. 
Suppose $M>d(\lambda )$. By the minimality of $M$, we have $\alpha (0,d(\lambda ))=\lambda $ and $\alpha (d(\lambda ), M) \neq \lambda (0, M-d(\lambda ))$. Hence we have shown $\lambda$ has an entrance.
\end{proof}
\begin{ex}
\label{ex:BC-alg}
Let $P=\{ p_1,p_2, \cdots | p_1 \le p_2 \le \cdots \}$ be the set of all prime numbers. For each $q \in P$, we define by $\mathbb{Z}_q$ the $q$-adic ring. Define $\mathcal{Z}=\prod_{p \in P} \mathbb{Z}_p$. 
For $n \in \mathbb{N}^{\oplus \infty}$, we set $p^n=( p_i^{n_{(i)}})_{i=1}^\infty$. 
For each $n \in \mathbb{N}^{\oplus \infty}$, define the endomorphism $\alpha_n$ on $C(\mathcal{Z})$ so that
\[
\alpha_{n}(f)(x)=
\left\{
  \begin{array}{ll}
  f(x/p^n) & \mathrm{if}\ x \in p^n \mathcal{Z}=\prod_{i=1}^\infty (p_i^{n_{(i)}}\mathbb{Z}_{p_i})  \\
  0 & \mathrm{otherwise}.
  \end{array}
\right.
\]
Then the Bost-Connes algebra is isomorphic to $C(\mathcal{Z}) \rtimes_\alpha \mathbb{N}^{\oplus \infty}$ which was shown in \cite[Proposition 32]{Laca}. 

For $n \in \mathbb{N}^{\oplus \infty}$, define $\Lambda^0=\mathcal{Z}$, $\Lambda^n= p^n \mathcal{Z}$. 
For $n \in \mathbb{N}^{\oplus \infty}$, $x \in \Lambda^n$, define $r_n(x)=x$. $s_n(x)=x/p^n$. 
From $(\Lambda^0 , \Lambda^n, r_n ,s_n)$, we can construct a topological $\infty$-graph $\Lambda_{\mathrm{BC}}$. 
We remark that for each $n \in \mathbb{N}^{\oplus \infty}$, $r_n(\Lambda^n)$ is not dense in $\Lambda^0$. 
This says that left actions of Hilbert bimodules are not injective. 

Let $X$ be the product system arising from $\Lambda_{\mathrm{BC}}$. Then $\mathcal{NO}_X$ is isomorphic to the Bost-Connes algebra by the universality. 
In particular, if we set $u_n:=\psi_n(1_{p^n\mathcal{Z}})$, we have $u_n u_m=u_{n+m}$ and $u_n \psi_0(f) u_n^*=\psi_0(\alpha_n (f))$.

We want to show that $\Lambda_{\mathrm{BC}}$ satisfies aperiodic condition. 
Take $\alpha \in v\Lambda_{\mathrm{BC}}^{\le \infty}$. If $d(\alpha )_{(l)} = \infty$, then we have $x(0)_l=0$. 
But for each $p \in P$, $\mathbb{Z}_p \setminus \{ 0 \}$ is dense in $\mathbb{Z}_p$. 
Hence we have that $v \in \Lambda_{\mathrm{BC}}^0$ such that every element of $v \Lambda_{\mathrm{BC}}^{\le \infty}$ has a finite degree is dense in $\Lambda_{\mathrm{BC}}^0$. 
This implies that $\Lambda_{\mathrm{BC}}$ satisfies aperiodic condition. 
\end{ex}

We prepare a lemma which is due to Renault-Sims-Yeend. 
We would like to thank Aidan Sims who allows us to use it in here. 

\begin{lem}
\label{lem:R-S-Y}
\normalfont\slshape
Let $\Lambda$ be a compactly aligned topological $k$-graph.
Let $K$ be a compact subset of $\Lambda$ and $\lambda \in \Lambda$. 
If $\{\lambda \} \vee K = \emptyset$ holds, then there is a neighborhood of $\lambda$ such that $V \vee K = \emptyset$.
\end{lem}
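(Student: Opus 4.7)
The plan is to argue by contradiction using the compactly aligned hypothesis to produce a compact set that traps all witnesses to $V \vee K \neq \emptyset$, then extract a convergent subnet and push the limit through the continuous segment maps.

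First I would reduce to the case where $K$ is contained in a single fibre $\Lambda^n$. Since $d$ is continuous and $\mathbb{N}^{\oplus k}$ is discrete, each $\Lambda^n$ is clopen in $\Lambda$, so any compact $K \subset \Lambda$ meets only finitely many fibres and decomposes as a finite disjoint union $K = \bigsqcup_{j} K_j$ with $K_j \subset \Lambda^{n_j}$ compact. Since $\{\lambda\} \vee K = \bigsqcup_j \{\lambda\} \vee K_j$ and likewise for $V \vee K$, choosing a neighborhood $V_j$ for each $K_j$ and taking the intersection yields the conclusion. So assume $\lambda \in \Lambda^m$ and $K \subset \Lambda^n$, and set $l := m \vee n$.

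Because $\Lambda$ is locally compact, choose a relatively compact open neighborhood $V_0$ of $\lambda$ inside $\Lambda^m$. By the compactly aligned assumption, $\overline{V_0} \vee K$ is compact, hence closed in $\Lambda^l$. Suppose toward a contradiction that for every open neighborhood $V \subset V_0$ of $\lambda$ we have $V \vee K \neq \emptyset$; pick $\nu_V \in V \vee K$. The net $\{\nu_V\}$ (indexed by such $V$, ordered by reverse inclusion) lies in the compact set $\overline{V_0} \vee K$, so it has a subnet $\nu_{V_i} \to \nu \in \overline{V_0} \vee K \subset \Lambda^l$.

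Now apply the continuous segment maps $\mathrm{Seg}_{(0,m)}^l$ and $\mathrm{Seg}_{(0,n)}^l$ from the excerpt's framework: $\mathrm{Seg}_{(0,m)}^l(\nu_{V_i}) \in V_i$, so as $V_i$ shrinks to $\lambda$ we obtain $\mathrm{Seg}_{(0,m)}^l(\nu_{V_i}) \to \lambda$, forcing $\mathrm{Seg}_{(0,m)}^l(\nu) = \lambda$ by Hausdorffness. Similarly $\mathrm{Seg}_{(0,n)}^l(\nu_{V_i}) \in K$ and $K$ is closed, so $\mathrm{Seg}_{(0,n)}^l(\nu) \in K$. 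Rewriting, this says $\nu \in \lambda\Lambda^{l-m} \cap K\Lambda^{l-n} = \{\lambda\} \vee K$, contradicting the hypothesis that $\{\lambda\} \vee K = \emptyset$.

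The only genuinely subtle step is the compactness in the second paragraph: without compactly aligned we would have no a priori control on the $\Lambda^{l-m}$-part of $\nu_V$, and the net could escape to infinity before witnessing a limit in $\{\lambda\} \vee K$. It is precisely the compactly aligned hypothesis, applied to the compact set $\overline{V_0} \supset \{\lambda\}$ and to $K$, that confines the net and allows the contradiction to be drawn.
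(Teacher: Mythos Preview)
Your argument is correct and follows essentially the same route as the paper's proof: both fix a compact neighborhood $V_0$ of $\lambda$, use compact alignment to trap the net of witnesses $\nu_V \in V \vee K$ inside the compact set $V_0 \vee K$ (respectively $\overline{V_0} \vee K$), extract a convergent subnet, and push the limit through the continuous segment map $\mathrm{Seg}_{(0,d(\lambda))}$ to land in $\{\lambda\} \vee K$. Your explicit reduction to $K$ lying in a single fibre $\Lambda^n$ is a worthwhile clarification that the paper leaves implicit, since the operation $U \vee V$ is only defined there for subsets of fixed degrees.
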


\begin{proof}
Let us suppose $V \vee K \neq \emptyset$ for any neighborhood $V$ of $\lambda$. 
Fix a compact neighborhood $V_0$ of $\lambda$ such that $V_0 \vee K \neq \emptyset$. 
For $\lambda \in V \subset V_0$, we can take $\mu_V \in V_0 \vee K$ and $\mu_V(0,d(\lambda ))\in V$. 
Since the net $\{ \mu_V \}_{V \subset V_0 }$ is in the compact set $V_0 \vee K$, we can find a subnet $\{ \mu_{V_j} \}$ of $\{ \mu_V \}$ such that $\mu_{V_j}$ converges to some $\mu \in V_0 \vee K$. 
Since $\mathrm{Seg}_{(0,d(\lambda ))}$ is continuous, we obtain $\mu(0,d(\lambda ))=\lambda$.
Hence we obtain $\mu \in \{ \lambda \} \vee K$, however this contradicts $\{ \lambda \} \vee K = \emptyset$.
\end{proof}

Let $X$ be the product system arising from a compact aligned topological $k$-graph $\Lambda$ and $\psi$ be a CNP-covariant representation of $X$.
Define a linear subspace $C^*(\psi )^{\mathrm{cpt}}$ of $C^*(\psi )$ by 
\begin{equation*}
C^*(\psi )^\mathrm{cpt}=\mathrm{span} \{ \psi_n(\xi ) \psi_m(\eta )^* | n,m \in \mathbb{N}^{\oplus k},\ \xi \in X_n^{\mathrm{cpt}} ,\ \eta \in X_m^{\mathrm{cpt}} \} .
\end{equation*}
By \cite[Lemma 1.6]{Katsura1}, $C^*(\psi )^{\mathrm{cpt}}$ is dense in $C^*(\psi )$.

Next, we shall show the key proposition in this section.

\begin{prop}
\label{prop:freeness}
\normalfont\slshape
Let $\Lambda$ be a compactly aligned topological $k$-graph and $X$ be the product system arising from $\Lambda$. Let $\psi$ be a CNP-covariant representation.
Suppose $\Lambda$ satisfies aperiodic condition and $\psi$ is injective. 
Take $x,x_0 \in C^*(\psi)^{\mathrm{cpt}}$ such that 
\begin{equation*}
x=\sum_{1 \le i \le L} \psi_{n_{i,1}}(\xi_{i,1})\psi_{n_{i,2}}(\xi_{i,2})^*,\quad 
x_0=\sum_{\{ 1 \le i \le L | n_{i,1}=n_{i,2} \}} \psi_{n_{i,1}}(\xi_{i,1}) \psi_{n_{i,2}} (\xi_{i,2})^*
\end{equation*}
where $\xi_{i,j} \in X_{n_{i,j}}^{\mathrm{cpt}}$ $(j=1,2)$.
Then for any $\epsilon >0$ and $x \in C^*(\psi)^{\mathrm{cpt}}$, there exist $b_1,b_2 \in C^*(\psi )$ such that $\| b_1 \| , \| b_2 \| \le 1$, $b_1^*xb_2=b_1^*x_0b_2$ and $\| x_0 \| \le \| b_1^* xb_2 \| + \epsilon$. 
\end{prop}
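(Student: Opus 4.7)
The strategy is to use the aperiodic condition to choose a boundary path $\alpha$ through a vertex where $x_0$ nearly achieves its norm, and then build compactly supported peaking sections $b_1, b_2 \in \psi_N(X_N)$ around a segment of $\alpha$ that simultaneously exactly annihilate the off-diagonal part $x - x_0$ and approximately preserve $\|x_0\|$.

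First, I would reduce $x_0$ to a canonical form. Applying Lemma \ref{lem:unity} to each $\xi_{i,j}$ via a partition of unity on its compact support, and then Lemma \ref{lem:katsura} to rewrite each resulting rank-one operator as $\pi$ of a function, each diagonal summand can be written as a finite sum of terms $\varphi_n^\psi(Q)$ with $Q \in C_c(\Lambda^n)$. Collecting these yields
\[
x_0 = \sum_{m \in F} \varphi_m^\psi(Q_m)
\]
for a finite $F \subset \mathbb{N}^{\oplus k}$ with $Q_m \in C_c(\Lambda^m)$; set $M = \bigvee F$. Since $\psi$ is injective, each $\varphi_m^\psi$ is isometric into $\psi_0(A)' \cap C^*(\psi)$, and by Theorem \ref{thm:analysiscore} the norm $\|x_0\|$ coincides with the supremum over $\lambda \in \Lambda^M$ of a concrete expression in the values $Q_m(\lambda(0,m))$. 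Using continuity of these evaluations together with the local-homeomorphism property of $s$, I choose $\lambda^* \in \Lambda^M$ at which this supremum is attained to within $\epsilon/2$ and whose source vertex lies in an open set $V \subset \Lambda^0$ to which the aperiodic condition (A) then supplies a boundary path $\alpha \in v\Lambda^{\le \infty}$ (for some $v \in V$) satisfying (A); by adjusting $\lambda^*$ slightly within $V$ using that $s$ is a local homeomorphism, I may assume $s(\lambda^*) = v$, so that the composite $\lambda^* \alpha$ is defined.

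The heart of the argument is the off-diagonal cancellation. For each of the finitely many off-diagonal pairs $(n_{i,1}, n_{i,2})$ with $n_{i,1} \ne n_{i,2}$, a direct computation using the product-system factorizations $X_N \cong X_{n_{i,j}} \otimes X_{N - n_{i,j}}$ expresses $\psi_N(\chi)^* \psi_{n_{i,1}}(\xi_{i,1}) \psi_{n_{i,2}}(\xi_{i,2})^* \psi_N(\chi)$ as a finite sum indexed by pairs of path segments in $\mathrm{supp}(\chi)$ whose factorizations at degrees $n_{i,1}$ and $n_{i,2}$ hit $\mathrm{supp}(\xi_{i,1})$ and $\mathrm{supp}(\xi_{i,2})$ respectively. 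Condition (A), via the finite-level witnesses $M_{n_{i,1}, n_{i,2}}$ from Definition \ref{df:cond(A)}, forbids such matching factorizations for the segment $\lambda^* \alpha(0, N - M)$ once $N$ is chosen large enough that all finitely many witnesses are visible within it. Lemma \ref{lem:R-S-Y} then promotes this pointwise vanishing at $\lambda^* \alpha(0, N - M)$ to vanishing on a whole relatively compact open neighborhood $W \subset \Lambda^N$ of it, on which $s$ is injective, uniformly across the finitely many off-diagonal summands.

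Finally, choose $\chi \in C_c(W)$ with $0 \le \chi \le 1$ and $\chi \equiv 1$ on a neighborhood of $\lambda^* \alpha(0, N - M)$, and set $b_1 = b_2 = \psi_N(\chi)$; then $\|b_j\| \le 1$ and $b_1^*(x - x_0) b_2 = 0$, giving the exact equality $b_1^* x b_2 = b_1^* x_0 b_2$. Using Lemma \ref{lem:turn3} to push each $\varphi_m^\psi(Q_m)$ past $\psi_N(\chi)$ and collapse onto a peaking function at $s(\lambda^* \alpha(0, N - M))$, one obtains $\|b_1^* x_0 b_2\| \ge \bigl|\sum_{m \in F} Q_m(\lambda^*(0,m))\bigr| - \epsilon/2 \ge \|x_0\| - \epsilon$. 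The principal obstacle is making the off-diagonal cancellation precise: translating the abstract aperiodicity of $\alpha$ into a concrete, uniform open-neighborhood condition on $\Lambda^N$ that kills all the finitely many off-diagonal pairs simultaneously while leaving enough room for the peaking estimate, which is where the openness in Lemma \ref{lem:R-S-Y} becomes essential.
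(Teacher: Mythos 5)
Your overall strategy (use aperiodicity to kill the off--diagonal terms by compression, and peak near a boundary path to preserve $\|x_0\|$) is the right one in spirit, but the opening reduction is false and the rest of the argument depends on it. You claim that Lemma \ref{lem:unity} plus Lemma \ref{lem:katsura} let you write $x_0=\sum_{m\in F}\varphi_m^\psi(Q_m)$ with $Q_m\in C_c(\Lambda^m)$. Lemma \ref{lem:katsura} goes only one way: it expresses $\pi_m(Q)$ as a sum of rank--one operators coming from a \emph{pair of orthogonal functions}; a general rank--one operator $\theta_{\xi,\eta}$ (and hence a general diagonal summand $\psi^{(n_i)}(\theta_{\xi_{i,1},\xi_{i,2}})$) is \emph{not} of the form $\varphi_m^\psi(Q)$. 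The algebra $\pi_m(C_0(\Lambda^m))$ is only the commutative ``diagonal'' inside $\mathcal{K}(X_m)$ --- already for a graph with one vertex and $d$ edges one has $\mathcal{K}(X_1)\cong M_d(\mathbb{C})$ versus $\pi_1(C(\Lambda^1))\cong\mathbb{C}^d$ --- so $x_0$ does not in general lie in the subalgebra you reduce to. This also sinks the norm estimate at the end: a single section $\chi$ with $s$ injective on $\mathrm{supp}(\chi)$ produces a vector state $\psi_0(\langle\chi,\cdot\,\chi\rangle)$ that only sees the ``matrix diagonal'' of a compact operator (in the $M_d$ example, $\sup_i|T_{ii}|$ rather than $\|T\|$), so $b_1=b_2=\psi_N(\chi)$ cannot recover $\|x_0\|$ up to $\epsilon$. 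A further unaddressed point is that $\|x_0\|$ is computed in $\widetilde{C}_n\subset\bigoplus_p\mathcal{L}(X_pI_{n-p})$, i.e.\ on the \emph{restricted} modules $X_pI_{n-p}$, not as a supremum over all of $\Lambda^M$.

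The paper's proof circumvents exactly these problems by inserting an extra compression before the aperiodicity argument: it first uses the injectivity of $\widetilde{\kappa}^\psi_n$ (Corollary \ref{cor:kappainj}) to find $p\le n$ and \emph{general} vectors $\eta_1,\eta_2\in X_p^{\mathrm{cpt}}I_{n-p}$ (with $\eta_1\ne\eta_2$, and with no injectivity constraint on $s$ over their supports) such that $\|x_0\|<\|g_p\|+\epsilon$ for $g_p=\langle\eta_1,\,\sum\iota_{n_i}^p(\theta)\,\eta_2\rangle\in A$; conjugation by $\psi_p(\eta_1)^*,\psi_p(\eta_2)$ collapses $x_0$ to $\psi_0(g_p)$ and (via Lemma \ref{lem:escape}) turns the surviving off--diagonal terms into the form $\psi_{n'_{i,1}}(\xi'_{i,1})^*\psi_{n'_{i,2}}(\xi'_{i,2})$. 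Only then is a function $Q\in C_c(\Lambda^M)$ peaking at $\alpha(0,M)$ brought in, and the off--diagonal cancellation is carried out through Nica covariance and the condition $\mathrm{supp}(u_{i,1})U\vee\mathrm{supp}(u_{i,2})U=\emptyset$ (with Lemma \ref{lem:R-S-Y} and a two--case analysis according to whether $d(\alpha)$ is finite in a coordinate where $n'_{i,1}$ and $n'_{i,2}$ differ --- a case split your sketch does not anticipate). If you want to salvage your write--up, you should keep your off--diagonal mechanism but replace the first reduction by this two--stage compression $b_j=\psi_p(\eta_j)\varphi_M^\psi(Q)$.
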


\begin{proof}
Set $n_0=\bigvee_{i=1}^L (n_{i,1} \vee n_{i,2})$. 
Define $\mathcal{I}_0=\{ 1 \le i \le L \ | \ n_{i,1}=n_{i,2} \}$ and $n_i:=n_{i,1}=n_{i,2}$ for $i \in \mathcal{I}_0$. 
Then
\begin{eqnarray*}
x_0&=&\sum_{i \in \mathcal{I}_0} \psi_{n_i}(\xi_{i,1}) \psi_{n_i} (\xi_{i,2})^*
=\sum_{i \in \mathcal{I}_0} \psi^{(n_i)} (\theta_{\xi_{i,1}, \xi_{i,2}} )\\
&=&\widetilde{\kappa}_{n_0}^\psi \Bigl( \Bigl( \sum_{i \in \mathcal{I}_0}  \widetilde{\iota}_{n_i}^n (\theta_{\xi_{i,1}, \xi_{i,2}}) \Bigr)_{\{ n: n_0 \le n\} } \Bigr).
\end{eqnarray*}
Since we assume $\psi$ is injective, $\widetilde{\kappa}_n^\psi$ is also injective by Corollary \ref{cor:kappainj}. Hence for any $\epsilon >0$, there is some $n \ge n_0$ such that
\begin{equation*}
\| x_0 \| \le \Bigl\| \sum_{i \in \mathcal{I}_0}  \widetilde{\iota}_{n_i}^n (\theta_{\xi_{i,1}, \xi_{i,2}}) \Bigr\|_{\bigoplus_{0 \le p \le n} \mathcal{L}(X_pI_{n-p}) } + \frac{\epsilon}{2}.
\end{equation*}
Then there is $0 \le p \le n$ such that 
\begin{equation*}
\| x_0 \| \le  \Bigl\| \sum_{\{i \in \mathcal{I}_0 |n_i \le p\}}  \iota_{n_i}^p (\theta_{\xi_{i,1}, \xi_{i,2}}) \Bigr|_{X_pI_{n-p}} \Bigr\|_{\mathcal{L}(X_pI_{n-p}) } + \epsilon.
\end{equation*}
Therefore for the above $\epsilon$, there exist some $\eta_1, \eta_2 \in X_p^{\mathrm{cpt}}I_{n-p}$ such that
\begin{eqnarray*}
\| x_0 \| < \Bigl\| \sum_{\{ i \in \mathcal{I}_0 | n_i \le p \}} \langle \eta_1,  \iota_{n_i}^p (\theta_{\xi_{i,1}, \xi_{i,2}}) \eta_2 \rangle_A^p \Bigr\| + \epsilon = \| g_p \| + \epsilon
\end{eqnarray*}
where we put $g_p=\sum_{\{ i \in \mathcal{I}_0| n_i \le p \}} \langle \eta_1,  \iota_{n_i}^p (\theta_{\xi_{i,1}, \xi_{i,2}}) \eta_2 \rangle_A^p \in A$. Then there are $v' \in \Lambda^0$ and non-empty neighborhood $V$ of $v'$ such that $\| x_0 \| < |g_p(v)| +\epsilon $ for any $v \in V$. 
If $0\le p<n$, then by Lemma \ref{lem:escape}, we have
\begin{eqnarray*}
\psi_p(\eta_1)^*x\psi_p(\eta_2)
&=&\sum_{\{ i | n_{i,1},n_{i,2} \le p \} } \psi_p(\eta_1)^* \psi_{n_{i,1}}(\xi_{i,1}) \psi_{n_{i,2}} (\xi_{i,2})^* \psi_p(\eta_2)\\
&=&\sum_{\{ i | n_{i,1},n_{i,2} \le p \} } \psi_{n_{i,1}'} (\xi_{i,1}')^* \psi_{n_{i,2}'}(\xi_{i,2}')
\end{eqnarray*}
where $\xi_{i,j}' \in X_{n_{i,j}'}^{\mathrm{cpt}}I_{n-p}$ $(j=1,2)$ and $n_{i,j}'=p-n_{i,j} \in \mathbb{N}^{\oplus k}$. 
Next we suppose $p=n$.
Set $n_{i,j}':=n-n_{i,j}$. Then
\begin{eqnarray*}
\psi_n(\eta_1)^*x\psi_n(\eta_2)
&=&\sum_{i=1}^L \psi_n(\eta_1)^* \psi_{n_{i,1}}(\xi_{i,1}) \psi_{n_{i,2}} (\xi_{i,2})^* \psi_n(\eta_2)\\
&=&\sum_{i=1}^L \psi_{n_{i,1}'} (\xi_{i,1}')^* \psi_{n_{i,2}'}(\xi_{i,2}')
\end{eqnarray*}
where $\xi_{i,j}' \in X_{n_{i,j}'}^{\mathrm{cpt}}$ $(j=1,2)$. 
Hence for $0 \le p \le n$, we get
\begin{equation*}
\psi_p(\eta_1)^*x\psi_p(\eta_2)= \sum_{\{ i | n_{i,1},n_{i,2} \le p \} } \psi_{n_{i,1}'} (\xi_{i,1}')^* \psi_{n_{i,2}'}(\xi_{i,2}')
\end{equation*}
and 
\begin{equation*}
\psi_p(\eta_1)^*x_0\psi_p(\eta_2)= \sum_{\{ i | n_{i,1}=n_{i,2} \le p \} } \psi_p(\eta_1)^* \psi_{n_{i,1}}(\xi_{i,1}) \psi_{n_{i,2}} (\xi_{i,2})^* \psi_p(\eta_2)
=\psi_0(g_p).
\end{equation*}
For $Q \in C_0(\Lambda^M)$, by Lemma \ref{lem:unity} and Lemma \ref{lem:turn3}, there are $u_{i,j} \in X_{n_{i,j}'}^{\mathrm{cpt}}$ such that
\begin{align*}
\lefteqn{\sum_{i \in \mathcal{I}_0^c} \varphi^\psi_{M}(Q)\psi_{n_{i,1}'} (\xi_{i,1}')^* \psi_{n_{i,2}'}(\xi_{i,2}')\varphi^\psi_{M}(Q)}\hspace{1cm}\\
&=&\sum_{i \in \mathcal{I}_0^c} \psi_{n_{i,1}'} (\xi_{i,1}')^* \varphi^\psi_{n_{i,1}'+M}(u_{i,1} \widehat{\otimes} Q)
\varphi^\psi_{n_{i,2}'+M}(u_{i,2} \widehat{\otimes} Q)\psi_{n_{i,2}'}(\xi_{i,2}') \tag{$\sharp$}
\end{align*}
where $\mathcal{I}_0^c=\{ 1 \le i \le L \ | \ i \notin \mathcal{I}_0 \}$. 
We want to find $Q \in C_c(\Lambda^M)$ such that $(\sharp )=0$. 
Since $\psi$ is Nica covariant, for each $i \in \mathcal{I}_0^c$ we have
\begin{eqnarray*}
\lefteqn{
\varphi^\psi_{n_{i,1}'+M}(u_{i,1} \widehat{\otimes} Q)
\varphi^\psi_{n_{i,2}'+M}(u_{i,2} \widehat{\otimes} Q)}\hspace{0.5cm}\\
&=&\psi^{(n_{i,1 }' \vee n_{i,2}' +M)} \Bigl( 
\iota_{n_{i,1}'+M}^{(n_{i,1 }' \vee n_{i,2}') +M} \bigl( \pi_{n_{i,1}'+M}(u_{i,1} \widehat{\otimes} Q) \bigr)\\
& &\qquad \iota_{n_{i,2}'+M}^{(n_{i,1 }' \vee n_{i,2}') +M} \bigl(\pi_{n_{i,2}'+M}(u_{i,2} \widehat{\otimes} Q) \bigr)
\Bigr).
\end{eqnarray*}
where we used $(n_{i,1}'+M) \vee (n_{i,2}'+M)= (n_{i,1 }' \vee n_{i,2}') +M$. 
For proving $(\sharp )=0$, we enough to construct a compact neighborhood $U$ such that
\begin{equation*}
\mathrm{supp}(u_{i,1})U \vee \mathrm{supp}(u_{i,2})U = \emptyset
\end{equation*}
for all $i \in \mathcal{I}_0^c$ and a function $Q \in C_c(U)$. 
Since we suppose $\Lambda$ satisfies aperiodic condition, there is $\alpha \in v_0\Lambda^{\le \infty}$ such that $\alpha$ holds (A). 
Fix $i \in \mathcal{I}_0^c$.
\\
\textbf{Case 1, there is $1 \le l \le k$ such that $d(\alpha )_{(l)}<\infty$ and $(n_{i,1}')_{(l)} \neq (n_{i,2}')_{(l)}$.}\\
Set $n_i'=n_{i,1}' \vee n_{i,2}'$. 
We may suppose $d(\alpha ) \in \mathbb{N}^{\oplus k}$ with $\alpha \Lambda^{e_l}=\emptyset$ and $(n_i' -n_{i,2}')_{(l)} > 0$ for the above $l$. 
Set 
\begin{equation*}
K_{i,1}=\mathrm{supp}(u_{i,1}) ,\ K_{i,2}=\mathrm{supp}(u_{i,2})
\end{equation*}
If there is a non-empty compact neighborhood $U_{i,1}$ of $\alpha $ such that
$K_{i,1}U_{i,1} \vee K_{i,2}U_{i,1}=\emptyset$, 
then we finished. So we suppose $K_{i,1}U_{i,1} \vee K_{i,2}U_{i,1}\neq \emptyset$. 
Then
\begin{equation*}
\mathrm{Seg}_{(n_{i,2}', n_i' +d(\alpha ))}^{n_i' + d(\alpha )} ( K_{i,1}U_{i,1} \vee K_{i,2}U_{i,1} )
\end{equation*}
is a compact neighborhood because $\mathrm{Seg}_{n_{i,2}', n_i' +d(\alpha )}^{n_i' + d(\alpha )}$ is open continuous map. Since $(n_i'-n_{i,2}')_{(l)} > 0$ and $d(\alpha )_{(l)}<\infty$, we have
\begin{equation*}
\mathrm{Seg}_{(n_{i,2}', n_i' +d(\alpha ))}^{n_i' + d(\alpha )} ( K_{i,1}U_{i,1} \vee K_{i,2}U_{i,1} ) \vee \{ \alpha \} = \emptyset .
\end{equation*}
By Lemma \ref{lem:R-S-Y}, there is a compact neighborhood $U_{i,2}$ of $\alpha$ such that 
\begin{equation*}
\mathrm{Seg}_{(n_{i,2}', n_i' +d(\alpha ))}^{n_i' + d(\alpha )} ( K_{i,1}U_{i,1} \vee K_{i,2}U_{i,1} ) \vee U_{i,2} = \emptyset .
\end{equation*}
Set $U_i=U_{i,1} \cap U_{i,2}$. 
Suppose we can take an element $\lambda \in K_{i,1}U_i \vee K_{i,2}U_i \subset \Lambda^{n_i' + d(\alpha )}$. Then we have
\begin{equation*}
\lambda (n_{i,2}', n_i' +d(\alpha ) ) \in \mathrm{Seg}_{(n_{i,2}', n_i' +d(\alpha ))}^{n_i' + d(\alpha )} ( K_{i,1}U_{i,1} \vee K_{i,2}U_{i,1} )
\end{equation*}
and $\lambda (n_{i,2}', n_{i,2}'+d(\alpha )) \in U_{i,2}$. But there is no element with this property.

In this case, we set $M_i=d(\alpha )$.\\
\textbf{Case 2, for all $1 \le l \le k$, we have $d(\alpha )_{(l)}=\infty$ or if $d(\alpha )_{(l)}< \infty$, then $(n_{i,1}')_{(l)} = (n_{i,2}')_{(l)}$.}
\\
Set $n_{i,j}''=n_{i,j}' \wedge d(\alpha )$ for $j=1,2$ and $n_i''=n_{i,1}'' \vee n_{i,2}''$. 
We remark $n_{i,1}'' \neq n_{i,2}''$. 
By condition (A), there is $M_i \le d(\sigma^{n_{i,1}''}\alpha ) \wedge d(\sigma^{n_{i,2}''}\alpha )$ such that 
\begin{equation*}
\alpha (n_{i,1}'' , n_{i,1}''  +M_i) \neq \alpha (n_{i,2}'' ,n_{i,2}''+M_i).
\end{equation*} 
For $j=1,2$, take a neighborhood $U_{i,j}$ of $\alpha (n_i''+M_i)$ such that $U_{i,1} \cap U_{i,2} = \emptyset$. 
Then we remark that $d(\alpha ) \ge n_i''+M_i$. 
Define a neighborhood $U_i'$ of $\alpha (0,n_i'' + M_i )$ by
\begin{equation*}
U_i'= \bigcap_{j=1,2} \Bigl( \mathrm{Seg}_{(n_{i,j}'',n_{i,j}''+M_i)}^{n_{i}'' +M_i} \Bigr)^{-1}(U_{i,j}).
\end{equation*}
Take a compact neighborhood $U_i \subset U_i'$ of $\alpha (0, n_i' +M_i)$. 

Suppose we can take $\lambda \in K_{i,1}U_i \vee K_{i,2}U_i$. 
Then we have $\lambda (n_{i,1}', n_{i,1}'+n_i''+ M) \in U_i$ and 
$\lambda (n_{i,2}', n_{i,2}' +n_i''+ M) \in U_i$. Hence we obtain
\begin{equation*}
\mathrm{Seg}_{(n_{i,2}'',n_{i,2}''+M_i)}^{n_i'' +M_i}(\lambda (n_{i,1}',n_{i,1}'+n_i''+M_i))=\lambda (n_{i,1}'+n_{i,2}'', n_{i,1}'+n_{i,2}''+M_i) \in U_{i,2}
\end{equation*}
and 
\begin{equation*}
\mathrm{Seg}_{(n_{i,1}'',n_{i,1}''+M_i)}^{n_i'' +M_i}(\lambda (n_{i,2}',n_{i,2}'+n_i''+M_i))=\lambda (n_{i,1}''+n_{i,2}', n_{i.1}''+n_{i,2}'+M_i) \in U_{i,1}.
\end{equation*}
From the assumption in this case, we can easily check $n_{i,1}'+n_{i,2}''=n_{i,1}''+n_{i,2}'$. 
But this contradicts $U_{i,1} \cap U_{i,2}=\emptyset$. 

From the two subcases, for each $i \in \mathcal{I}_0$, we have a non-empty compact neighborhood $U_i \subset \Lambda^{M_i}$ of $\alpha (0,M_i)$ such that $\mathrm{supp}(u_{i,1})U_i \vee \mathrm{supp}(u_{i,2})U_i = \emptyset$. 
Define $M=\vee_{i \in \mathcal{I}_0^c} M_i$, $U=\vee_{i \in \mathcal{I}_0^c}U_i$. This $U$ satisfies $\mathrm{supp}(u_{i,1})U \vee \mathrm{supp}(u_{i,2})U = \emptyset$ for all $i \in \mathcal{I}_0^c$. 
Then $U$ is a non-empty compact neighborhood of $\alpha (0,M)$. Define $Q \in C_c(U)$ so that $0 \le Q \le 1$, $Q(\alpha (0,M))=1$.

If we define $b_j=\psi_p(\eta_j)\varphi_M^\psi (Q)$, then we obtain $b_1^*xb_2=b_1^*x_0b_2$ from $(\sharp)=0$. 
Moreover
\begin{eqnarray*}
\| x_0 \|
& < & |g_p(v_0)| + \epsilon
=|Q(\alpha (0,M) ) g_p(v_0) Q(\alpha (0,M) ) | + \epsilon \\
&\le& \| Q(g_p \circ r_M)Q \|_{C_0(\Lambda^M)} + \epsilon \\    
&=&\| \pi_M(Q (g_p \circ r_M) Q) \|_{\mathcal{L}(X_M)} + \epsilon \quad (\because \pi_M \mbox{ is injective})\\
&=&\|\psi^{(M)} (\pi_M(Q) \phi_M(g_p ) \pi_M(Q) ) \| + \epsilon \quad (\because \psi^{(M)} \mbox{ is injective since }\psi \mbox{ is injective})\\
&=&\| \varphi_M^\psi(Q)\psi_0(g_p) \varphi_M^\psi(Q) \| + \epsilon 
= \| \varphi_M^\psi(Q) \psi_p(\eta_1)^* x_0 \psi_p(\eta_2) \varphi_M^\psi(Q) \| + \epsilon \\
&=&\| b_1^*x b_2 \| + \epsilon
\end{eqnarray*}
Hence we complete the proof. 
\end{proof}

The following statement is the main theorem which is called Cuntz-Krieger type uniqueness theorem. 

\begin{thm}
\label{thm:CK-unique}
\normalfont\slshape
Let $\Lambda$ be a compactly aligned topological $k$-graph and $X$ be the product system arising from $\Lambda$. 
Suppose $\Lambda$ satisfies aperiodic condition and $\psi$ is an injective CNP-covariant representation. 
Then $\Pi \psi : \mathcal{NO}_X \longrightarrow C^*(\psi )$ is isometry.
\end{thm}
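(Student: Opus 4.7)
The plan is to show that $\Pi\psi$ is injective; since $\Pi\psi$ is a $*$-homomorphism of $C^*$-algebras, injectivity is equivalent to being isometric. The strategy combines the faithful conditional expectation coming from the gauge action with the technical separation property already packaged in Proposition \ref{prop:freeness}.

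First I would recall the gauge action $\gamma$ of $\mathbb{T}^k$ on $\mathcal{NO}_X$ and the associated faithful conditional expectation $E : \mathcal{NO}_X \longrightarrow \mathcal{NO}_X^\gamma = C^*(j_X)^{\mathrm{core}}$ constructed just before Corollary \ref{cor:core}. By that corollary, the restriction $\Pi\psi|_{\mathcal{NO}_X^\gamma}$ is injective, so $\|\Pi\psi(y)\| = \|y\|$ for every $y$ in the core.

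Now suppose $a \in \mathcal{NO}_X$ satisfies $\Pi\psi(a) = 0$, and set $a' = a^*a \ge 0$, so $\Pi\psi(a') = 0$. Fix $\epsilon > 0$ and choose $x \in C^*(j_X)^{\mathrm{cpt}}$ with $\|a' - x\| < \epsilon$, say $x = \sum_{i=1}^L j_{X,n_{i,1}}(\xi_{i,1})\, j_{X,n_{i,2}}(\xi_{i,2})^*$ with $\xi_{i,j} \in X_{n_{i,j}}^{\mathrm{cpt}}$. Set $x_0 = \sum_{\{i\,:\,n_{i,1}=n_{i,2}\}} j_{X,n_{i,1}}(\xi_{i,1})\, j_{X,n_{i,2}}(\xi_{i,2})^*$, so that $E(x) = x_0$. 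Applying Proposition \ref{prop:freeness} to the injective CNP-covariant representation $\psi = \Pi\psi \circ j_X$ and to the elements $\Pi\psi(x),\ \Pi\psi(x_0) \in C^*(\psi)^{\mathrm{cpt}}$, I obtain contractions $b_1, b_2 \in C^*(\psi)$ with
\begin{equation*}
b_1^* \Pi\psi(x) b_2 = b_1^* \Pi\psi(x_0) b_2, \qquad \|\Pi\psi(x_0)\| \le \|b_1^* \Pi\psi(x) b_2\| + \epsilon.
\end{equation*}
Since $\Pi\psi$ is isometric on the core, $\|\Pi\psi(x_0)\| = \|x_0\| = \|E(x)\|$, while
\begin{equation*}
\|b_1^* \Pi\psi(x) b_2\| \le \|\Pi\psi(x)\| \le \|\Pi\psi(a')\| + \|a' - x\| < \epsilon.
\end{equation*}
Hence $\|E(x)\| < 2\epsilon$, and since $E$ is contractive,
\begin{equation*}
\|E(a')\| \le \|E(a' - x)\| + \|E(x)\| < 3\epsilon.
\end{equation*}
As $\epsilon > 0$ was arbitrary, $E(a^*a) = 0$, and faithfulness of $E$ forces $a^*a = 0$, i.e. $a = 0$.

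The substantive content — the aperiodicity hypothesis entering through a delicate combinatorial argument with the segment maps $\mathrm{Seg}^{n}_{(p,q)}$ and Lemma \ref{lem:R-S-Y} — has already been absorbed into Proposition \ref{prop:freeness}, and the analytic content identifying the core has been handled by Corollary \ref{cor:core}. The only point that needs to be checked carefully in this final argument is that $\Pi\psi$ carries the diagonal part $x_0$ of $x$ to the diagonal part of $\Pi\psi(x)$ in the sense required by Proposition \ref{prop:freeness}; this is immediate because $\Pi\psi \circ j_X = \psi$ and the decomposition according to the degrees $n_{i,1}, n_{i,2}$ is preserved term-by-term. No further obstacle arises.
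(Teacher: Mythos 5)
Your argument is correct and follows essentially the same route as the paper: both proofs feed Proposition \ref{prop:freeness} into the faithfulness of the gauge expectation $E$ and the injectivity of $\Pi\psi$ on the core (Corollary \ref{cor:core}). The only cosmetic difference is that the paper first uses the inequality $\|x_0\|\le\|x\|$ to build a conditional expectation $E_\psi$ on $C^*(\psi)$ intertwining with $E$, whereas you bypass that construction with a direct $\epsilon$-approximation; both are valid.
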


\begin{proof}
Take $x ,x_0\in C^*(\psi)^{\mathrm{cpt}}$ as in Proposition \ref{prop:freeness}. 
Since $\| x_0 \| \le \| x \|$ by Proposition \ref{prop:freeness} and $C^*(\psi )^{\mathrm{cpt}}$ is dense in $C^*(\psi )$, we can define $E_\psi : C^*(\psi ) \longrightarrow C^*(\psi )^{\mathrm{core}}$ such that $\Pi \psi \circ E = E_\psi \circ \Pi \psi$ on $\mathcal{NO}_X$. 
Suppose a positive element $x \in \mathcal{NO}_X$ satisfies $\Pi \psi (x)=0$. 
Then $\Pi \psi ( E (x) )=E_\psi (\Pi \psi (x))=0$ and by Corollary \ref{cor:core}, we get $E(x)=0$. 
Hence $x=0$ and this implies $\Pi \psi$ is injective.
\end{proof}

\begin{ex}
Consider $\Lambda_{\mathrm{BC}}$ defined in Example \ref{ex:BC-alg} and the associated product system $X$. 
Since $\Lambda_{\mathrm{BC}}$ satisfies aperiodic condition, Cuntz-Krieger uniqueness theorem holds for $\mathcal{NO}_X$. 
This gives another proof of \cite[Theorem 3.7]{Laca-Raeburn}. 
\end{ex}

\end{document}